\newtheoremstyle{theorem2}
{8pt}
{8pt}
{\itshape}
{}
{\bfseries}
{.}
{.5em}
{}
\newtheoremstyle{definition2}
{8pt}
{8pt}
{\itshape}
{}
{\bfseries}
{.}
{.5em}
{}
\theoremstyle{theorem2}
\newtheorem{lemma}{Lemma}[section]
\newtheorem{teo}[lemma]{Theorem}
\newtheorem{cor}[lemma]{Corollary}
\newtheorem{prop}[lemma]{Proposition}
\theoremstyle{definition2}
\newtheorem{deff}[lemma]{Definition}
\newtheorem{Remark}[lemma]{Remark}
\begin{document}
\title{Intrinsic regular surfaces of low codimension in Heisenberg groups}
\author{Francesca Corni}
\address{Dipartimento di Matematica, Università di Bologna, Piazza di Porta San Donato, 5, 40126, Bologna, Italy}
\email{francesca.corni3@unibo.it}
\keywords{Heisenberg groups, H-regular surfaces, Intrinsic graphs, Intrinsic differentiability}
\subjclass[2010]{ 22E30 , \ 35R03 (primary); \ 53C17 , \ 28A78 (secondary). }

\maketitle
\begin{abstract}

In this paper we study intrinsic regular submanifolds of $\mathbb{H}^n$ of low codimension in relation with the regularity of their intrinsic parametrization.
We extend some results proved for $\mathbb{H}$-regular surfaces of codimension 1 to $\mathbb{H}$-regular surfaces of codimension $k$, with $1 \leq k \leq n$. We characterize uniformly intrinsic differentiable functions, $\phi$, acting between two complementary subgroups of the Heisenberg group $\mathbb{H}^n$, with target space horizontal of dimension $k$, in terms of the Euclidean regularity of their components with respect to a family of non linear vector fields $\nabla^{\phi_j}$. Moreover, we show how the area of the intrinsic graph of  $\phi$ can be computed in terms of the components of the matrix representing the intrinsic differential of $\phi$.
\end{abstract}

\section{Introduction}

Carnot groups are connected, simply connected, nilpotent Lie groups whose Lie algebra is stratified. A Carnot group, $\mathbb{G}$, is called of step $k$ if it is nilpotent of order $k$. In the last years many efforts have been carried out in order to develop a geometric measure theory in these setting. This interest stems from the possibility of equipping any Carnot group $\mathbb{G}$ with a sub-Riemannian homogeneous distance, which can be defined starting from the horizontal distribution that is the distribution linearly generated by the vector fields in the first layer of the Lie algebra. Moreover, Carnot groups can be considered as model spaces for general sub-Riemannian manifolds: the tangent cone (in the sense of Gromov-Hausdorff convergence) at regular points of a sub-Riemannian manifold endowed with a Carnot-Carathodory distance $d_{c}$ associated to a distribution $\Delta$, turns out to be a Carnot group. 
$\mathbb{R}^n$ is a trivial example of Carnot group: the horizontal distribution coincides with the whole tangent bundle. The Heisenberg group $\mathbb{H}^n$ is the simplest example of a non-commutative Carnot group: it is nilpotent of step 2 and it can be identified with $\mathbb{R}^{2n+1}$ with a suitable polynomial group law.\\ 
In this line of research, setting a suitable notion of rectifiable set is an important goal (see for instance \cite{Diffandapprox, Charactheisen, Pau}).  In the Euclidean setting, these are defined, up to a negligible set, as countable unions of compact subsets of regular submanifolds. Here, the word "regular" can be interpreted in various ways, all equivalent to each other from a metric point of view. One of these viewpoints corresponds to the possibility of approximating the set with a tangent plane at almost every points (for more details see \cite{Mattila}).
In order to define in an analogous way a suitable notion of rectifiability in Carnot groups, we first need a good notion of intrinsic regular submanifold. In $\mathbb{R}^n$, a regular submanifold of arbitrary dimension $k$ can be locally defined equivalently as graph of a $C^1$ function $\phi: \mathbb{R}^{k} \to \mathbb{R}^{n-k}$  or as level set of a $C^1$ function $f: \mathbb{R}^n \to \mathbb{R}^{n-k}$ with continuous surjective differential. In Carnot groups these two approaches are not equivalent anymore even if read through suitable notions of regularities (see for instance \cite{Diffandapprox, IntLipgraphsHeisen, DiffofIntr, IntLipgraphs}).
Nevertheless, a notion of regular surfaces of low codimension in Carnot groups has been stated through the very well-fitting notion of Pansu differentiability.

In this work we focus on low codimensional $\mathbb{H}$-regular surfaces i.e. regular submanifolds in Heisenberg groups. A set $\mathcal{S} \subset \mathbb{H}^n$ is a regular surface of codimension $k$, with $1 \leq k \leq n$, if it is locally the zero level set of 
 a Pansu differentiable function $f$ from $\mathbb{H}^n$ to $\mathbb{R}^k$ whose differential is both continuous and surjective (for more details see for instance \cite{Magnani_2013}). In this setting we are able to state a suitable intrinsic notion of graph. One can split $\mathbb{H}^n$ as the product of two complementary subgroups $\mathbb{M}$ and $\mathbb{H}$ that are two homogeneous subgroups such that $\mathbb{H}^n= \mathbb{M} \cdot \mathbb{H}$ and $\mathbb{M} \cap \mathbb{H}= \{ e \}$. Then, given an open set $\Omega \subset \mathbb{M}$ and a function $\phi: \Omega \to \mathbb{H}$, the intrinsic graph of $\phi$ is defined as $$\mathrm{graph}(\phi)=\{ \ m \cdot \phi(m) \ | \ m \in \Omega \}.$$ 
The term "intrinsic" is used to highlight the fact that if we translate or dilate an intrinsic graph through intrinsic left translations or dilations of the group (i.e. dilations associated to the stratification of the algebra) we obtain again an intrinsic graph.\\
Bearing in mind its Euclidean counterpart, a suitable implicit function theorem is available also in the setting of Heisenberg groups. This has been proved in \cite{Areaformula}, whereas for a more general result valid in any Carnot group please refer to \cite{Magnani_2013}. This implicit function theorem ensures that any $\mathbb{H}$-regular surface of low codimension is locally the intrinsic graph of a continuous map $\phi$ which acts between two complementary homogeneous subgroups $\mathbb{M}$ and $\mathbb{H}$, and it is unique up to the choice of these subgroups. 
The theorem implies the continuity of the intrinsic parametrization $\phi$. To be more precise, the function $\phi$ is $\frac{1}{2}$-Holder continuous, with respect to the homogeneous distance fixed on the group (restricted to $\mathbb{M}$ and $\mathbb{H}$).\\

In the last years, many different intrinsic notions of regularity have been developed for functions defined between complementary subgroups, well as notions of intrinsic Lipschitz continuity, intrinsic differentiability and uniform intrinsic differentiability (see Definitions \ref{DefIntLip}, \ref{defintdif}, \ref{DEFUID1}). These have been studied in order to understand if and how the Pansu-type regularity of the function $f$ that locally defines the regular surface $\mathcal{S}$ is reflected on the  regularity of its intrinsic parametrization $\phi$ (when it exists). Vice versa, many efforts have also been carried out to figure out which regularity has to be required (on top of continuity) to a function $\phi$ acting between complementary subgroups, to ensure that its intrinsic graph is a regular surface. 
This theme has been developed in various papers (among which \cite{Articolo, BigCarSer, bigser, Citti,  DiDonato, Artem, Intquotients}),  in particular, many results have been developed for $\mathbb{H}$-regular surfaces of codimension 1.

$\mathbb{H}$-regular surfaces of codimension 1 (\cite{Articolo}), and successively, regular surfaces of low codimension in any Carnot groups (\cite{DiDonatoArt}) have been characterized as graphs of uniformly intrinsic differentiable functions acting between complementary subgroups, with horizontal, and hence commutative, target space (see Theorem \ref{teo1}).

Uniform intrinsic differentiability has been characterized in \cite{Articolo} and in \cite{bigser}, for maps with one dimensional target space, in terms of existence and continuity of suitable intrinsic partial derivatives.
The authors represent the intrinsic differential of an intrinsic differentiable function $\phi$ at a point $p$ by a $(2n-1)$-dimensional vector, called the intrinsic gradient of $\phi$ at $p$ and we denote it by $\nabla^{\phi} \phi(p)$. We stress that $\nabla^{\phi} \phi$ only denotes a vector and not a vector field or a differential operator, since it exists only at the point $p$ where the function $\phi$ is differentiable. The intrinsic regularity of $\phi$ turns out to be connected with the regularity of $\phi$ along $2n-1$ vector fields, that we denote by $\nabla^{\phi}_j$, $j=1, \dots, 2n-1$. The components of $\nabla^{\phi}_j$ depend on $\phi$ and are continuous; for instance, in $\mathbb{H}^1$ we only have one vector field, $\nabla^{\phi}_1=(1, \phi)$. The authors prove that if $\phi$ is continuously Euclidean differentiable, then $(\nabla^{\phi}_j)(\phi) (p)=(\nabla^{\phi}\phi(p))_j$ for every $j=1, \dots, 2n-1$, i.e. the vector field $\nabla^{\phi}_j$ applied to the function $\phi$ equals the $j$-th element of the intrinsic gradient of $\phi$ and this is valid at every point of the domain of $\phi$.

Moreover, uniformly intrinsic differentiable functions with one dimensional target space have been also characterized as uniform limit, on all open sets compactly contained in the domain, of a sequence of Euclidean regular graphs whose continuous intrinsic gradients converge uniformly to a continuous function, on the same sets. The limit function of intrinsic gradients coincides in distributional sense with the vector-valued function whose components are the weak derivatives $\nabla^{\phi}_j \phi$, $j=1, \dots, k$.

Let $\mathbb{H}^n= \mathbb{M} \cdot \mathbb{H}$ be the product of two complementary subgroups with $\mathbb{H}$ horizontal of dimension $k$, with $1 \leq k \leq n$ (see equation (\ref{subgroups})). Let $\phi: \Omega \subset \mathbb{M}  \to \mathbb{H} $ be a continuous function where $\Omega$ is an open set. In the following theorems we will name again $\phi$ the function that acts from an open subset $\Omega$ of $\mathbb{R}^{2n+1-k}$, still denoted by $\Omega$, to $\mathbb{R}^k$. We do so by identifying $\mathbb{M}$ with $\mathbb{R}^{2n+1-k}$ and $\mathbb{H}$ with $\mathbb{R}^k$, as homogeneous groups (see (\ref{Remcorresp})). Combining results from \cite{Articolo} and \cite{bigser} we have the following. 

\begin{teo}
\label{Tintro1}
Let $\Omega \subset \mathbb{R}^{2n}$ be an open set and let $\phi: \Omega \to \mathbb{R}$ be a continuous function. Then the following conditions are equivalent:
\begin{itemize}
\item $\phi$ is uniformly intrinsic differentiable on $\Omega$;
\item there exists $w \in C^0(\Omega, \mathbb{R}^{2n-1})$ such that
$$ ( \nabla^{\phi}_1 \phi, \dots, \nabla^{\phi}_{2n-1} \phi)=w$$ in distributional sense on $\Omega$.
\item there exists a family of functions $ \{ \phi_{\varepsilon}\}_{\varepsilon>0} \subset C^1(\Omega)$ such that, for any open set $\Omega'\Subset \Omega$, we have $\phi_{\varepsilon} \to \phi$ and $\nabla^{\phi_{\varepsilon}} \phi_{\varepsilon}\to w$ uniformly on $\Omega'$ as $\varepsilon$ goes to zero.
\end{itemize}
\end{teo}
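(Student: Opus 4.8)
The plan is to prove the theorem by establishing a cycle of implications among the three conditions, reducing everything to the codimension-1 case ($k=1$) that is the content of Theorem \ref{Tintro1}. The three statements are: (UID) uniform intrinsic differentiability of $\phi$; (W) the existence of a continuous $w$ with $(\nabla^\phi_1\phi,\dots,\nabla^\phi_{2n-1}\phi)=w$ in the distributional sense; and (A) approximation by a family $\{\phi_\varepsilon\}\subset C^1(\Omega)$ whose intrinsic gradients $\nabla^{\phi_\varepsilon}\phi_\varepsilon$ converge uniformly to $w$ on compactly contained subsets. Since the excerpt attributes these equivalences to the combination of the results of \cite{Articolo} and \cite{bigser}, the strategy is to quote the characterizations proved there and glue them together, rather than to reprove the analytic heart from scratch.

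**First I would** make precise the meaning of the nonlinear first-order operators $\nabla^\phi_j$. In $\mathbb{H}^n$ the relevant vector fields have coefficients depending on $\phi$ itself (e.g. $\nabla^\phi_1=(1,\phi)$ in $\mathbb{H}^1$), so the expression $\nabla^\phi_j\phi$ is a Burgers-type quadratic nonlinearity and must be interpreted distributionally; I would recall from \cite{bigser} that $\nabla^\phi_j\phi$ has a well-defined distributional meaning for merely continuous $\phi$ because each $\nabla^\phi_j\phi$ can be written as $X_j\phi + \partial_{v}(\text{quadratic in }\phi)$, i.e. a divergence-form expression whose distributional sense requires only $\phi\in C^0\cap L^\infty_{\mathrm{loc}}$. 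With this fixed, the implication (A)$\Rightarrow$(W) is the easiest: for $C^1$ functions $\phi_\varepsilon$ one has the pointwise identity $\nabla^{\phi_\varepsilon}_j\phi_\varepsilon=(\nabla^{\phi_\varepsilon}\phi_\varepsilon)_j$ recalled in the introduction, and uniform convergence on each $\Omega'\Subset\Omega$ passes to the limit inside the distributional pairing against a test function (whose support is some such $\Omega'$), yielding $(\nabla^\phi_1\phi,\dots,\nabla^\phi_{2n-1}\phi)=w$ as distributions.

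**The main obstacle** will be the reverse passage (W)$\Rightarrow$(A) and the link (W)$\Leftrightarrow$(UID), which are exactly the nontrivial theorems of \cite{Articolo} and \cite{bigser}. For (W)$\Rightarrow$(A) one needs a regularization scheme that is compatible with the nonlinear structure: a naive Euclidean mollification $\phi*\rho_\varepsilon$ does not satisfy $\nabla^{\phi*\rho_\varepsilon}(\phi*\rho_\varepsilon)\to w$ because the operator $\nabla^\phi_j$ is not linear in $\phi$ and mollification does not commute with the quadratic term. I would instead invoke the \emph{intrinsic mollifiers} or the approximation-by-characteristics argument of \cite{bigser}, where one mollifies along the flow of the vector fields (a second-order parabolic-type regularization in the nonlinear direction), so that the commutator error between mollification and the nonlinearity is controlled by the continuity of $w$ and tends to zero uniformly on $\Omega'\Subset\Omega$. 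The convergence $\phi_\varepsilon\to\phi$ then follows from standard mollification estimates, while $\nabla^{\phi_\varepsilon}\phi_\varepsilon\to w$ is the delicate point requiring the uniform continuity of $w$ on the slightly larger set $\Omega''$ with $\Omega'\Subset\Omega''\Subset\Omega$.

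**Finally I would** close the cycle with (A)$\Rightarrow$(UID) and (W)$\Rightarrow$(UID) (or, symmetrically, (UID)$\Rightarrow$(W)). The implication (UID)$\Rightarrow$(W) is essentially the content of the characterization in \cite{Articolo}: uniform intrinsic differentiability guarantees that the intrinsic gradient $\nabla^\phi\phi$ exists as a continuous function $w$ and that the distributional derivatives $\nabla^\phi_j\phi$ coincide with its components, so one sets $w=\nabla^\phi\phi$. For the converse direction I would use the approximating family from (A): each graph of $\phi_\varepsilon$ is a Euclidean-$C^1$ (hence uniformly intrinsic differentiable) graph with intrinsic gradient $\nabla^{\phi_\varepsilon}\phi_\varepsilon$, and uniform convergence of both $\phi_\varepsilon\to\phi$ and of the intrinsic gradients to the continuous $w$ on compacta lets me pass to the limit in the uniform-intrinsic-differentiability estimate (a quantitative Taylor-type inequality with remainder controlled uniformly in $\varepsilon$), concluding that $\phi$ is itself uniformly intrinsic differentiable with intrinsic gradient $w$. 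Assembling the implications (A)$\Rightarrow$(W), (W)$\Rightarrow$(A), (UID)$\Leftrightarrow$(W) gives the three-fold equivalence, and the whole argument is genuinely a synthesis of the two cited sources specialized to the scalar ($k=1$, target dimension one) situation.
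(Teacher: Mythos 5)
The paper offers no proof of Theorem \ref{Tintro1}: it is stated as a known result, obtained by ``combining results from \cite{Articolo} and \cite{bigser}'', and your plan of quoting those two sources and gluing the characterizations together is exactly the paper's own treatment, so at that level the proposal is consistent. Your implication (A)$\Rightarrow$(W) — pass to the limit in the distributional pairing, using that $\nabla^{\phi}\phi$ has a divergence-form meaning (e.g.\ $\partial_{\eta}\phi+\partial_{\tau}(\phi^{2}/2)$) for merely continuous, locally bounded $\phi$ — is correct as stated.

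Two of your sketches of the internal mechanisms, however, do not match how the cited arguments (and their codimension-$k$ analogues proved in Sections 4--5 of this paper) actually run, and would not work as described. First, the approximating family in (UID)$\Rightarrow$(A) is not produced by any mollification of $\phi$, intrinsic or otherwise: one uses the level-set characterization to write $\mathrm{graph}(\phi)$ as $\{\,f=0\,\}$ with $f\in C^{1}_{\mathbb{H}}$, mollifies $f$ in the ordinary Euclidean sense, and applies the Euclidean implicit function theorem to $f_{\varepsilon}$ to obtain $\phi_{\varepsilon}$; the convergence $\nabla^{\phi_{\varepsilon}}\phi_{\varepsilon}\to w$ then comes from the identity $\nabla^{\phi_{\varepsilon}}\phi_{\varepsilon}=-(\mathbf{X}f_{\varepsilon})^{-1}\mathbf{Y}f_{\varepsilon}$ and the uniform convergence of the horizontal derivatives of $f_{\varepsilon}$ (this is Proposition \ref{P9} here, following Theorem 2.1 of \cite{Reghyper}). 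Second, (A)$\Rightarrow$(UID) is not a passage to the limit in a ``quantitative Taylor-type inequality with remainder controlled uniformly in $\varepsilon$'': no such uniform control of the remainders of the $\phi_{\varepsilon}$ is available a priori, and this is precisely the hard analytic core. The actual route (Propositions \ref{P6} and \ref{P3} and Theorem \ref{T2} in the present paper, mirroring Theorem 5.1 of \cite{Articolo}) is that the approximation yields a family of exponential maps along which a chain rule with continuous data holds; this in turn yields the little-H\"older estimate $|\phi(a)-\phi(b)|=o(|a-b|^{1/2})$ uniformly on compacta; and exponential maps together with little-H\"older continuity imply uniform intrinsic differentiability. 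If you intend merely to invoke \cite{Articolo} and \cite{bigser} as black boxes the equivalence stands, but the descriptions you give of their interiors should be corrected along these lines.
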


In \cite{bigser} and \cite{Notesserra}, the authors prove two further characterizations.

\begin{teo} (\cite{Notesserra}, Theorem 4.95)
Let $\Omega \subset \mathbb{R}^{2n}$ be an open set and let $\phi: \Omega \to \mathbb{R}$ be a function. The following conditions are equivalent:

\begin{itemize}
\item $\phi$ is uniformly intrinsic differentiable on $\Omega$;
\item $\phi \in C^0(\Omega)$ and for every $a \in \Omega$, for every $j \in \{ 1, \dots, 2n-1 \}$, there exists $\partial^{\phi_j} \phi(a)$, i.e. a real number such that for every $\gamma^j: ( -\delta, \delta) \to \Omega$ integral curve of $\nabla^{\phi}_j$ with $\gamma^j(0)=a$, the limit $\lim_{t \to 0} \frac{\phi(\gamma^j(t))-\phi(\gamma^j(0))}{t}$ exists, it is equal to $\partial^{\phi_j} \phi(a)$
and the map $\partial^{\phi_j} \phi: \Omega \to \mathbb{R}$ is continuous.
\item $\phi$ is intrinsic differentiable on $\Omega$ and the map $\nabla^{\phi} \phi : \Omega \to \mathbb{R}^{2n-1}$ is continuous.
\end{itemize}
\end{teo}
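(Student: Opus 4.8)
The plan is to prove the three conditions equivalent via the cycle (i)$\Rightarrow$(iii)$\Rightarrow$(ii)$\Rightarrow$(i), where (i), (ii), (iii) are the three bullets in the order stated, and to route the genuinely hard work into the last implication by reducing it to Theorem~\ref{Tintro1}, which is already available.

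For (i)$\Rightarrow$(iii): by Definition~\ref{DEFUID1}, uniform intrinsic differentiability on $\Omega$ forces intrinsic differentiability (Definition~\ref{defintdif}) at every point, so the first half of (iii) is immediate and $\nabla^{\phi}\phi$ is defined throughout $\Omega$. To get its continuity I would exploit that the convergence of the intrinsic difference quotients is \emph{uniform} in the base point: writing $\nabla^{\phi}\phi(p)$ as the coefficient vector of the approximating intrinsic-linear map and applying the triangle inequality to two nearby base points $p,q$, the uniform rate of convergence together with continuity of $\phi$ bounds $|\nabla^{\phi}\phi(p)-\nabla^{\phi}\phi(q)|$ by the modulus of convergence at a small scale plus a term vanishing as $q\to p$. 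This is the standard mechanism by which a uniform pointwise intrinsic differential depends continuously on the point.

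For (iii)$\Rightarrow$(ii): fix $a\in\Omega$ and $j$. Since the coefficients of $\nabla^{\phi}_j$ depend continuously on the continuous function $\phi$, integral curves $\gamma^j$ through $a$ exist by Peano, though possibly non-unique. The point is that moving along $\gamma^j$ is, to first order, the intrinsic increment in the $j$-th direction, so inserting the intrinsic-differentiability estimate of (iii) along $\gamma^j$ yields $\frac{d}{dt}\big|_{t=0}\phi(\gamma^j(t))=(\nabla^{\phi}\phi(a))_j$. Because the intrinsic-differentiability error is $o$ of the homogeneous distance \emph{uniformly} on a full neighbourhood of $a$, this limit is the same for every integral curve; hence $\partial^{\phi_j}\phi(a)$ is well defined and equals $(\nabla^{\phi}\phi(a))_j$, and continuity of $a\mapsto\partial^{\phi_j}\phi(a)$ is inherited from the assumed continuity of $\nabla^{\phi}\phi$.

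The main obstacle is (ii)$\Rightarrow$(i). Here I would show that the continuous broad derivatives $w_j:=\partial^{\phi_j}\phi$ coincide with the \emph{distributional} derivatives $\nabla^{\phi}_j\phi$; once the identity $(\nabla^{\phi}_1\phi,\dots,\nabla^{\phi}_{2n-1}\phi)=w$ holds in $\mathcal D'(\Omega)$ with $w$ continuous, the second bullet of Theorem~\ref{Tintro1} yields uniform intrinsic differentiability for free. To pass from the along-characteristics information to a distributional identity I would mollify, $\phi_\varepsilon:=\phi*\rho_\varepsilon$, and compare $\nabla^{\phi_\varepsilon}_j\phi_\varepsilon$ with $w_j*\rho_\varepsilon$. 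For the indices $j$ at which $\nabla^{\phi}_j$ has constant coefficients the convolution commutes and causes no trouble; the genuine difficulty is the nonlinear Burgers-type component, whose error is a commutator of the form $\phi_\varepsilon\,\partial\phi_\varepsilon-\big((\phi\,\partial\phi)*\rho_\varepsilon\big)$, not controllable by naive estimates precisely because $\nabla^{\phi}_j$ itself depends on $\phi$. The key is a Friedrichs/DiPerna--Lions-type commutator lemma adapted to these fields: the hypothesis that the characteristic derivative $w_j$ is \emph{continuous} upgrades the transverse-only regularity of $\phi$ just enough to force this commutator to $0$ locally uniformly as $\varepsilon\to 0$. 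I expect verifying that the nonlinearity does not obstruct $\nabla^{\phi_\varepsilon}_j\phi_\varepsilon\to w_j$ to be the technical heart of the argument. Concluding, the locally uniform convergences $\phi_\varepsilon\to\phi$ and $\nabla^{\phi_\varepsilon}_j\phi_\varepsilon\to w_j$ supply exactly the approximating family in the third bullet of Theorem~\ref{Tintro1}, whence $\phi$ is uniformly intrinsic differentiable.
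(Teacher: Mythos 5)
Your cycle (i)$\Rightarrow$(iii)$\Rightarrow$(ii)$\Rightarrow$(i) matches the architecture the paper uses for its generalization (Theorems \ref{T6} and \ref{T7}, of which the stated result is the $k=1$ case), and your first two implications are essentially the paper's own arguments: continuity of the intrinsic gradient for uniformly intrinsic differentiable maps as in Proposition \ref{P1}, and the fact that pointwise intrinsic differentiability at $a$ already pins down the derivative along \emph{every} integral curve of $\nabla^{\phi}_j$ through $a$, as in Proposition \ref{P2}. (For the latter, note that the mechanism is not uniformity of the differentiability error on a neighbourhood, which (iii) does not grant you, but the combination of the $o(d_\phi(\cdot,a))$ estimate at the single point $a$ with the bound $d_\phi(\gamma^j(s),a)=O(|s|)$ valid for any such curve; this is a harmless imprecision in your write-up.)

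The genuine gap is in (ii)$\Rightarrow$(i). You reduce it to the distributional bullet of Theorem \ref{Tintro1} and propose to obtain $\partial_\eta\phi+\tfrac12\partial_\tau(\phi^2)=w$ in $\mathcal D'(\Omega)$ by mollifying and killing the commutator $\phi_\varepsilon\,\partial_\tau\phi_\varepsilon-(\phi\,\partial_\tau\phi)*\rho_\varepsilon$ with a Friedrichs/DiPerna--Lions-type lemma. Such commutator estimates require Sobolev or BV regularity of the coefficient, whereas here $\phi$ is merely continuous, so no off-the-shelf lemma applies; the claim that continuity of the characteristic derivative ``upgrades the transverse-only regularity of $\phi$ just enough'' is precisely the missing content. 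You have correctly identified the technical heart but not supplied it, and the broad$^*$-to-distributional implication is itself a theorem of comparable depth in \cite{bigser}, not a citable lemma. The paper's proof avoids distributions entirely: from the hypothesis (ii) it first extracts the quantitative little-$\tfrac12$-H\"older estimate (Propositions \ref{P4} and \ref{P5}, via a contradiction argument on pairs of characteristics that would otherwise have to cross with incompatible values of $\phi$), and then proves uniform intrinsic differentiability directly by estimating the difference quotient along a concatenation of exponential maps (Theorem \ref{T2}), closing the loop with the comparison between $\rho_\phi$ and $d_\phi$ in Propositions \ref{PD} and \ref{2implica3}. To make your route rigorous you would either have to reprove that equivalence or replace the commutator step with an argument of the paper's type.
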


In this paper we extend these theorems to $\mathbb{H}$-regular surfaces in $\mathbb{H}^n$ of codimension $1 \leq k \leq n$. Roughly speaking, these objects correspond to uniformly intrinsic differentiable graphs of functions $\phi$ acting between complementary subgroups with horizontal target space of dimension $k$, as in (\ref{subgroups}). As we said, we can identify $\phi$ with a continuous function acting between $\mathbb{R}^{2n+1-k}$ and $\mathbb{R}^k$. The intrinsic gradient $\nabla^{\phi} \phi$ is replaced by a $k \times (2n-k)$ intrinsic Jacobian matrix $J^{\phi} \phi$. Its form is related to a family of $2n-k$ vector fields $W^{\phi}_j$ whose coefficients depend on $\phi$ and are at least continuous. We would also have liked to interpret the action of the vector fields $W^{\phi}_j$'s on the components of $\phi$ in a distributional way. We didn't find a distributional form analogous to the one in the second item of Theorem \ref{Tintro1} that would allow to give a distributional meaning to the writing $W^{\phi}_j \phi$ (we refer to \cite{bigser}, where this point of view in codimension 1 has been fully explored).\\
We prove the following results.
\begin{teo} 
\label{teorema1}
Let $\Omega \subset \mathbb{R}^{2n+1-k}$ be an open set and let $\phi: \Omega  \to \mathbb{R}^k $ be a continuous function. Then the following conditions are equivalent:
\begin{itemize}
\item [(i)] $\phi$ is uniformly intrinsic differentiable on $ \Omega$;
\item[(ii)] there exists a family of maps $\{ \phi_{\varepsilon} \}_{\varepsilon>0} \subset C^1(\Omega)$ and a continuous matrix-valued function $M \in C^0(\Omega, M_{k,2n-k}(\mathbb{R}))$ such that for any open set $\Omega' \Subset \Omega$,
$$ \phi_{\varepsilon}\to \phi$$
$$ J^{\phi_{\varepsilon}}\phi_{\varepsilon} \to M$$
uniformly on $\Omega'$ as $\varepsilon$ goes to zero.
\end{itemize}
\end{teo}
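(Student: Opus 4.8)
The plan is to establish the two implications of the equivalence separately. The overall strategy mirrors the codimension-one argument relating the first and third items of Theorem \ref{Tintro1}, but it cannot be reduced to that scalar result componentwise: the $2n-k$ vector fields $W^\phi_j$ governing the codimension-$k$ problem are genuinely different from the $\nabla^\phi_j$ and couple all $k$ components of $\phi$, so every estimate has to be carried out directly at the level of the intrinsic Jacobian matrix $J^\phi\phi$.

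For the implication (i) $\Rightarrow$ (ii), I would start from the fact that uniform intrinsic differentiability furnishes at each point of $\Omega$ an intrinsic differential, represented by a continuous matrix-valued map which I take to be $M := J^\phi\phi$, and that the difference quotients defining it converge uniformly on compact subsets of $\Omega$. I would then build the family $\{\phi_\varepsilon\}_{\varepsilon>0}$ by a mollification adapted to the intrinsic structure, convolving along the group law so that the approximants are compatible with the vector fields $W^\phi_j$. The heart of this direction is a commutation estimate: one shows that $J^{\phi_\varepsilon}\phi_\varepsilon$ differs from a mollification of $M$ by an error term controlled by the UID modulus of $\phi$, which therefore tends to zero uniformly on every $\Omega'\Subset\Omega$. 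Combined with the standard fact that mollification yields $\phi_\varepsilon\to\phi$ uniformly on compact sets, this gives $J^{\phi_\varepsilon}\phi_\varepsilon\to M$ uniformly on $\Omega'$, as required.

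For the converse (ii) $\Rightarrow$ (i), each $\phi_\varepsilon\in C^1(\Omega)$ is automatically uniformly intrinsic differentiable, with continuous intrinsic Jacobian $J^{\phi_\varepsilon}\phi_\varepsilon$; the task is to pass this regularity to the limit. I would integrate the intrinsic increment of $\phi_\varepsilon$ along the integral curves of the vector fields $W^{\phi_\varepsilon}_j$, writing the increment of $\phi_\varepsilon$ between two points of such a curve as the integral of the corresponding entries of $J^{\phi_\varepsilon}\phi_\varepsilon$. The uniform convergence $J^{\phi_\varepsilon}\phi_\varepsilon\to M$ provides a uniform-in-$\varepsilon$ modulus of continuity for these Jacobians, hence a uniform control on the intrinsic difference quotients of the $\phi_\varepsilon$. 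Letting $\varepsilon\to 0$ and using $\phi_\varepsilon\to\phi$ uniformly, I would conclude that $\phi$ inherits the same estimate, i.e. that $\phi$ is uniformly intrinsic differentiable with $J^\phi\phi=M$.

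The main obstacle in both directions is the nonlinear dependence of the vector fields $W^\phi_j$ on $\phi$ itself. In codimension one the single scalar unknown makes the integral curves of $\nabla^\phi_j$ relatively easy to handle, whereas here the integral curves of $W^{\phi_\varepsilon}_j$ vary with $\varepsilon$ through all the components of $\phi_\varepsilon$ and must be controlled uniformly. The crucial technical step is thus a stability estimate for these flows under the uniform convergence $\phi_\varepsilon\to\phi$: I must guarantee that the curves along which increments are computed, together with their parametrizations, converge, so that the limit of the integrated Jacobians genuinely reproduces the intrinsic differential of $\phi$. Establishing this flow stability uniformly on compact sets, and in a way compatible with the matrix structure of $J^\phi\phi$, is where the bulk of the work will lie.
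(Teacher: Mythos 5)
Your proposal has genuine gaps in both directions, and in the first direction it departs from the only strategy that is actually available.

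For (i) $\Rightarrow$ (ii), the paper (Proposition \ref{P9}) does \emph{not} mollify $\phi$ at all, whether Euclidean or ``along the group law''. It first invokes Theorem \ref{teo1} to realize $\mathrm{graph}(\phi)$ locally as the zero set of a level function $f \in C^1_{\mathbb{H}}$ with $\det(\textbf{X}f)\neq 0$, mollifies the \emph{components of $f$} with a Euclidean Friedrichs mollifier, and then applies the Euclidean implicit function theorem to $f_{\varepsilon}$ to produce $\phi_{\varepsilon}\in C^1$; the convergence $J^{\phi_{\varepsilon}}\phi_{\varepsilon}=-(\textbf{X}f_{\varepsilon})^{-1}\textbf{Y}f_{\varepsilon}\to -(\textbf{X}f)^{-1}\textbf{Y}f=J^{\phi}\phi$ then follows from standard facts about mollification of the continuous functions $X_jf_i$. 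Your ``commutation estimate'' --- that $J^{\phi_{\varepsilon}}\phi_{\varepsilon}$ differs from a mollification of $M$ by an error controlled by the UID modulus of $\phi$ --- is precisely the step that nobody knows how to do directly: the UID modulus controls increments of $\phi$ against the degenerate quasi-distance $d_{\phi}$, not derivatives of a convolution, and the nonlinear terms $\phi_i\partial_{\tau}$ inside $\nabla^{\phi_i}$ do not commute with any convolution in a controllable way (this difficulty is exactly why the smooth approximation of intrinsic Lipschitz graphs in \cite{Citti} is a hard theorem). Without routing the construction through the level-set function $f$, this direction is not proved.

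For (ii) $\Rightarrow$ (i), your limit of integral curves does recover what the paper calls a family of exponential maps (Proposition \ref{P6}, an Ascoli--Arzel\`a argument giving curves along which the chain rule (\ref{catena}) holds with $\omega_{i,j}=m_{i,j}$), but that alone does not yield uniform intrinsic differentiability. The missing ingredient is the vertical estimate: UID requires $|\phi(b)-\phi(b')|=o(|\tau-\tau'|^{1/2})$ uniformly for points $b,b'$ differing essentially only in the last coordinate, and no amount of integrating $J^{\phi_{\varepsilon}}\phi_{\varepsilon}$ along integral curves of $W^{\phi_{\varepsilon}}_j$ controls increments in the $\tau$-direction, which is transverse to all of these vector fields. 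In the paper this $\tfrac12$-little-H\"older continuity is Propositions \ref{P3}--\ref{P5}, proved by a separate contradiction argument that compares two integral curves of $\nabla^{\phi_j}$ issued from vertically separated points and uses the modulus of continuity of the $\omega_{j,j+(n-k)}$; it is then fed into Theorem \ref{T2} as the hypothesis (\ref{hol}), and only the combination ``exponential maps $+$ little-H\"older'' gives UID. Your phrase ``$\phi$ inherits the same estimate'' skips over this entire block, which is where the bulk of the work lies.
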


Notice that, in retrospect, it is possible to conclude that if (ii) is valid, then for any point $ a \in \Omega$, $M(a)=J^{\phi}\phi(a)$.

The core of the present paper is the following result.

\begin{teo}
\label{teorema2}
Let $\Omega \subset \mathbb{R}^{2n+1-k}$ be an open set and let $\phi: \Omega  \to \mathbb{R}^k  $ be a function. We define $S:= \mathrm{graph}(\phi)$. Then the following conditions are equivalent:
\begin{itemize}
\item[(i)] $\phi$ is uniformly intrinsic differentiable on $\Omega$;
\item[(ii)] $\phi \in C^0( \Omega)$ and for every $a \in \Omega$, for every $j \in \{ 1, \dots, 2n-k \}$, there exists $k$-dimensional vector of real numbers $\begin{pmatrix}
\alpha_{1,j} & \dots &\alpha_{k,j}\\
\end{pmatrix} \in \mathbb{R}^k$
such that for every $\gamma^j: ( -\delta, \delta) \to \Omega$ integral curve of $W^{\phi}_j$ with $\gamma^j(0)=a$, the limit $\lim_{t \to 0} \frac{\phi(\gamma^j(t))-\phi(\gamma^j(0))}{t}$ exists, it is equal to
$\begin{pmatrix}
\alpha_{1,j} & \dots &\alpha_{k,j}\\
\end{pmatrix}$
and, if we define $\partial^{\phi_j} \phi(a):=\begin{pmatrix}
\alpha_{1,j} & \dots &\alpha_{k,j}\\
\end{pmatrix}^T$, for $j=1, \dots 2n-k$, the function
$$ \partial^{\phi_j} \phi : \Omega \to \mathbb{R}^k$$
is continuous;
\item[(iii)] $\phi$ is intrinsic differentiable on $\Omega$ and the map $J^{\phi} \phi : \Omega \to M_{k,2n-k}(\mathbb{R})$ is continuous;
\item[(iv)] there are $U$ open set in $\mathbb{H}^n$ and $f  \in C^1_{\mathbb{H}}(U, R^k)$ such that $S = \{p  \in U : f(p) = 0 \}$. There exist $V_1, \dots V_k \in \mathfrak{h}^n_1$ linearly independent such that $[V_i, V_j]=0$ for $i,j=1, \dots, k$ and $\det([V_if_j]_{i,j=1, \dots, k}(q)) \neq 0,$ for all $ q \in U$.
\end{itemize}
\end{teo}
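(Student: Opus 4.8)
The plan is to close the cycle of implications $(i) \Rightarrow (iii) \Rightarrow (ii) \Rightarrow (i)$ among the three analytic conditions, and then to establish the equivalence $(i) \Leftrightarrow (iv)$ with the level-set characterization. The implication $(i) \Rightarrow (iii)$ should be the most direct: uniform intrinsic differentiability is, by Definition \ref{DEFUID1}, a strengthening of intrinsic differentiability, so the only real content is the continuity of $q \mapsto J^{\phi}\phi(q)$. I would read this off from Theorem \ref{teorema1}: condition $(i)$ produces a family $\{\phi_{\varepsilon}\} \subset C^1(\Omega)$ with $\phi_{\varepsilon} \to \phi$ and $J^{\phi_{\varepsilon}}\phi_{\varepsilon} \to M$ locally uniformly, with $M$ continuous; since uniform limits of continuous maps are continuous and, by the remark following Theorem \ref{teorema1}, $M(a) = J^{\phi}\phi(a)$ pointwise, the continuity of $J^{\phi}\phi$ follows.

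For $(iii) \Rightarrow (ii)$ I would differentiate $t \mapsto \phi(\gamma^{j}(t))$ along an integral curve of $W^{\phi}_j$. The key identity, generalizing the codimension-$1$ relation $(\nabla^{\phi}_j)\phi = (\nabla^{\phi}\phi)_j$ recalled in the introduction, is that the field $W^{\phi}_j$ applied componentwise to $\phi$ returns precisely the $j$-th column of $J^{\phi}\phi$. Evaluating the first-order expansion furnished by the intrinsic differential then shows that the limit $\lim_{t\to 0}(\phi(\gamma^{j}(t))-\phi(\gamma^{j}(0)))/t$ exists, equals the $j$-th column of $J^{\phi}\phi(a)$, and is in particular independent of the chosen integral curve; setting $\partial^{\phi_j}\phi(a)$ equal to this column and invoking $(iii)$ gives its continuity.

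The main obstacle is $(ii) \Rightarrow (i)$. Here I only know that the directional derivatives of $\phi$ along the $2n-k$ nonlinear fields $W^{\phi}_j$ exist, are intrinsic to the curves, and depend continuously on the base point, and I must upgrade this to uniform intrinsic differentiability. My strategy is to reduce to Theorem \ref{teorema1}: I would construct $C^1$ approximations $\phi_{\varepsilon}$ of $\phi$, built by an intrinsic mollification adapted to the flows of the $W^{\phi}_j$, and prove that $\phi_{\varepsilon} \to \phi$ and $J^{\phi_{\varepsilon}}\phi_{\varepsilon} \to M$ locally uniformly, where $M := (\partial^{\phi_1}\phi \mid \cdots \mid \partial^{\phi_{2n-k}}\phi)$; then $(i)$ follows from Theorem \ref{teorema1}. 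The delicate point is that the fields $W^{\phi}_j$ depend on $\phi$ itself and do not commute: the domain $\Omega \subset \mathbb{R}^{2n+1-k}$ has one more dimension than the number of available directions, and this missing (vertical) direction is recovered only through the commutators $[W^{\phi}_i, W^{\phi}_j]$. I therefore expect the crux to be a bracket-generating argument showing that control of $\phi$ along the $W^{\phi}_j$ together with continuity of the $\partial^{\phi_j}\phi$ propagates to control in the full tangent space, so that the approximants' intrinsic Jacobians converge to $M$ uniformly on every $\Omega' \Subset \Omega$.

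Finally, for $(i) \Leftrightarrow (iv)$ I would invoke the identification of low-codimensional $\mathbb{H}$-regular surfaces with uniformly intrinsic differentiable graphs (Theorem \ref{teo1}). For $(iv) \Rightarrow (i)$, the hypotheses that $V_1,\dots,V_k \in \mathfrak{h}^n_1$ are linearly independent, commute, and satisfy $\det([V_i f_j](q)) \neq 0$ are exactly the nondegeneracy needed for the Heisenberg implicit function theorem of \cite{Areaformula, Magnani_2013}: the commuting horizontal fields generate the horizontal $k$-dimensional complementary subgroup $\mathbb{H}$, the nonvanishing determinant makes the Pansu differential of $f$ surjective and transverse to the splitting $\mathbb{H}^n = \mathbb{M}\cdot\mathbb{H}$, and the theorem yields $S$ as the intrinsic graph of a uniformly intrinsic differentiable $\phi$. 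Conversely, for $(i) \Rightarrow (iv)$, a uniformly intrinsic differentiable graph is an $\mathbb{H}$-regular surface by Theorem \ref{teo1}, hence locally the zero set of some $f \in C^1_{\mathbb{H}}(U,\mathbb{R}^k)$ with continuous surjective differential; the horizontal commutative target subgroup $\mathbb{H}$ of dimension $k$ furnishes the commuting frame $V_1,\dots,V_k$, and the surjectivity of $df$ together with the complementarity of $\mathbb{M}$ and $\mathbb{H}$ gives $\det([V_i f_j]) \neq 0$ on $U$, after possibly shrinking $U$.
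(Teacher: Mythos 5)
Your outer architecture ($(i)\Rightarrow(iii)\Rightarrow(ii)\Rightarrow(i)$ plus $(i)\Leftrightarrow(iv)$ via Theorem \ref{teo1}) coincides with the paper's, and three of the four pieces are handled essentially as in the text: for $(i)\Rightarrow(iii)$ the paper quotes Proposition \ref{P1} where you pass through Theorem \ref{teorema1}, which is a legitimate variant; your $(iii)\Rightarrow(ii)$ is exactly Proposition \ref{P2} together with Corollary \ref{corollarioequiv} (note only that the independence of the limit from the chosen integral curve rests on the nontrivial estimate $d_{\phi}(\gamma^j(s),a)\leq C|s|$ along the non-smooth fields $\nabla^{\phi_\ell}$, which you should not take for granted); and $(i)\Leftrightarrow(iv)$ is indeed Theorem \ref{teo1} modulo the change of Heisenberg basis.

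The genuine gap is in $(ii)\Rightarrow(i)$. You propose to build $C^1$ approximants $\phi_{\varepsilon}$ by an ``intrinsic mollification adapted to the flows'' and feed them into Theorem \ref{teorema1}, deferring the hard part to an unspecified bracket-generating argument. Two problems. First, the only construction of such approximants in the paper (Proposition \ref{P9}) mollifies the defining function $f$ of the level set and applies the Euclidean implicit function theorem, and it \emph{requires} that $\phi$ already be uniformly intrinsic differentiable so that $\mathrm{graph}(\phi)$ is an $\mathbb{H}$-regular surface; invoking it here is circular, while mollifying $\phi$ directly in the domain gives no control of $J^{\phi_{\varepsilon}}\phi_{\varepsilon}$ because the fields $W^{\phi_{\varepsilon}}_j$ move with $\varepsilon$. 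Second, the mechanism by which hypothesis $(ii)$ recovers the missing vertical direction is not bracket generation in any formal sense: the paper proves (Propositions \ref{P4} and \ref{P5}) that the existence of continuous $\omega_{i,j}$ with $\frac{d}{dt}\phi_i(\gamma^j(t))=\omega_{i,j}(\gamma^j(t))$ along \emph{every} integral curve forces the $\frac{1}{2}$-little-H\"older estimate (\ref{equnmezzo}), the key step being a contradiction argument showing that two integral curves of $\nabla^{\phi_j}$ issued from points differing only in the $\tau$-coordinate would have to cross while the values of $\phi_j$ at the crossing point still differ. With that estimate in hand the paper verifies the definition of uniform intrinsic differentiability directly (Theorem \ref{T2}), joining nearby points by a piecewise integral curve of the $W^{\phi}_j$'s and integrating the $\omega_{i,j}$ along it; no smooth approximation is needed. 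As written, your $(ii)\Rightarrow(i)$ is a statement of intent rather than an argument, and the route you sketch is unlikely to close without essentially reproducing these two steps.
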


Moreover, from results in \cite{Areaformula} and \cite{Franchi2015}, we prove an area formula for the $(2n+2-k)$-centered Hausdorff measure of a $\mathbb{H}$-regular surface of $\mathbb{H}^n$ of codimension $k$ for $1 \leq k \leq n$, $\mathcal{S}$, parametrized by a uniformly intrinsic differentiable function $\phi: \Omega \subset \mathbb{R}^{2n+1-k} \to \mathbb{R}^k$. For every Borel set $\mathcal{O}$ in $\mathbb{H}^n$

\begin{equation}
\label{area}
 C_{\infty}^{2n+2-k} (\mathcal{S} \cap \mathcal{O})
=\int_{ \Phi^{-1}( \mathcal{O}) \cap \Omega} \sqrt{ 1 + \sum_{\ell=1}^k\sum_{I \in \mathcal{I}_{\ell}}  A_I(p)^2 } \ d \mathcal{H}_e^{2n+1-k}(p)
\end{equation}
where 
$$ \mathcal{I}_{\ell}:= \{ (i_1, \dots, i_{\ell},j_1, \dots, j_{\ell})) \in \mathbb{N}^{2 \ell} \ | \ 1 \leq i_1 < i_2 < \dots < i_{\ell} \leq 2n-k, \ 1 \leq  j_1 < j_2 \dots < j_{\ell} \leq k \} $$
and
$$ A_I(p) := \mathrm{det}\begin{pmatrix} 

[J^{\phi} \phi]_{j_1,i_1} & \dots & [J^{\phi} \phi]_{j_1,i_{\ell}}\\
\dots & \dots & \dots \\
[J^{\phi} \phi]_{j_{\ell},i_1} & \dots & [J^{\phi} \phi]_{j_{\ell},i_{\ell}}\\

 \end{pmatrix} (p) .$$\\

The map $\Phi$ is the graph map defined in (\ref{graphmap}).\\

The plan of the paper is the following. In Section 2 we recall definitions and known results about Heisenberg groups, we fix some coordinates and we introduce various notions of intrinsic regularity. In Section 3 we introduce the functions $\phi$ acting between complementary subgroups and we fix some notations. We restate in this setting notions of graph-distance and intrinsic differentiability. In Section 4 we build a uniform approximation for a given uniformly intrinsic differentiable function $\phi$, in such a way that it is approximated along with its intrinsic Jacobian matrix as in Theorem \ref{teorema1}. We set the notion of a family of exponential maps and we see that the existence of a uniform approximation of the function $\phi$, like the one in Theorem \ref{teorema1}, implies the existence of a family of exponential maps at any point of the domain of $\phi$. Moreover, this latter fact implies a $\frac{1}{2}$-H\"older type regularity on the function $\phi$. Section 5 is devoted to the prove of Theorems \ref{teorema1} and \ref{teorema2}. Finally, in Section 6 we prove the area formula (\ref{area}).

\section{Some  definitions}
Let us recall some basic definitions; for more details please refer to \cite{Notesserra}.

A Carnot group $\mathbb{G}$ is a connected, simply connected, nilpotent Lie group whose Lie algebra, $\mathfrak{g}$, is stratified, i.e. $\mathfrak{g}$ can be written as the direct sum of linear subspaces $\mathfrak{g}_i$ and it is generated by the first level of the algebra using brackets:
$$ \mathfrak{g}=\mathfrak{g}_1 \oplus \mathfrak{g}_2 \oplus \dots \oplus \mathfrak{g}_k$$
such that $$[\mathfrak{g}_1, \mathfrak{g}_i]=\mathfrak{g}_{i+1} \ \ \ \ \mathfrak{g}_k \neq \{ 0 \} \ \ \ \ \mathfrak{g}_i= \{ 0 \} \ \ \text{if}\ \ i >k $$ 
where $[\mathfrak{g}_1,\mathfrak{g}_i]=\mathrm{span} \{ [X,Y]  \ |  \ X \in \mathfrak{g}_1, \ Y \in \mathfrak{g}_i\}$.

The natural number $k$ is called the step of the group.

The Lie algebra $\mathfrak{g}$ is isomorphic to the tangent space $T_p \mathbb{G}$ at every $p \in \mathbb{G}$: the map that associates to any left-invariant vector field $V \in \mathfrak{g}$ the vector $V(p) \in T_p \mathbb{G}$ is an isomorphism.\\

The Heisenberg group $\mathbb{H}^n$ is the simplest example of a non-commutative Carnot group. Its Lie algebra, denoted by $\mathfrak{h}^n$, is stratified of step 2. It is the direct sum of two linear subspaces
$$ \mathfrak{h}^n= \mathfrak{h}^n_1 \oplus \mathfrak{h}^n_2,$$

where $\mathfrak{h}_1= \mathrm{span} \{X_1, \dots, X_n, Y_1, \dots, Y_n \}$ and $\mathfrak{h}_2=\mathrm{span} \{T \}$ with 
\begin{equation}
\label{condizioni_basis}
[X_j, Y_j]=T, \ [X_i, X_j]=[Y_i, Y_j]=0 \ \mathrm{ for } \ i,j=1, \dots,k \ \mathrm{ and } \ [X_i, Y_j]=0 \ \mathrm{ for } \ i \neq j.
\end{equation}
We call such a basis $\{X_1, \dots, X_n, Y_1, \dots, Y_n, T \}$ of $\mathfrak{h}^n$ a \textit{Heisenberg basis}.\\

Vector fields of $\mathfrak{h}^n_1$ are called horizontal vector fields. Since $\mathfrak{h}^n$ is isomorphic to the tangent space of $\mathbb{H}^n$ at $e$, the horizontal layer of the algebra $\mathfrak{h}^n_1$ is isomorphic to a linear subspace of $T_e\mathbb{H}^n$ and we denote it by $V$. If we move $V$ through the left translations of $\mathbb{H}^n$
\begin{equation}
\label{translation}
L_p: \mathbb{H}^n \to \mathbb{H}^n, L_p(g):= p \cdot g,
\end{equation}
or, precisely, through the differential of $L_p$ for every $p \in 	\mathbb{H}^n$, then the disjoint union of $ \{ (dL_p(e)(V),p) \}_{p \in \mathbb{H}^n}$ is a sub-bundle of the tangent bundle. We call it the horizontal bundle and we denote it by $H \mathbb{H}^n$. Since we are considering left-invariant vector fields, it is immediate to see that the fibre of $H\mathbb{H}^n$ at $p \in \mathbb{H}^n$, that is the vector space $dL_p(e)(V)$, is generated by the vector fields $\{X_1, \dots, X_n, Y_1, \dots, Y_n \}$ evaluated at $p$:
$$H \mathbb{H}^n_p=\mathrm{span} \{ X_1(p), \dots, X_n(p), Y_1(p), \dots, Y_n(p) \}.$$

We fix an inner product $\langle \cdot, \cdot \rangle$ on $ \mathfrak{h}^n$ such that $\{X_1, \dots, X_n, Y_1, \dots, Y_n, T \} \subset \mathfrak{h}^n$ is an orthonormal basis of $\mathfrak{h}^n$. Since $\mathfrak{h}^n$ can be identified for any $p \in \mathbb{H}^n$ with $T_p \mathbb{H}^n$, we denote by $\langle \cdot, \cdot \rangle_p$ the corresponding inner product on $T_p \mathbb{H}^n$.\\

The exponential map $\mathrm{exp}: \mathfrak{h}^n\to \mathbb{H}^n$ is a global diffeomorphism, hence once fixed a basis for $\mathfrak{h}^n$, $\{ V_1, V_2, \dots, V_{2n+1} \}$, every $ p \in \mathbb{H}^n$ can be written in an unique way as
\begin{equation}
\label{coordinates}
p= \mathrm{exp}(p_1 V_1+ p_2 V_2 + \dots + p_{2n+1} V_{2n+1}) \ \ \ \text{with }p_i \in \mathbb{R},
\end{equation} 

and then we can identify any point $ p \in \mathbb{H}^n$ with the vector $ (p_1, \ p_2, \ \dots, \ p_{2n+1}) \in \mathbb{R}^{2n+1}.$\\ 

Considering the Heisenberg basis $ \{ X_1, \dots, X_n, Y_1, \dots, Y_n, T \}$ (or any other Heisenberg basis), we identify $\mathbb{H}^n$ with $\mathbb{R}^{2n+1}$ as in (\ref{coordinates}), so the vector fields of the fixed basis are then identified with the following vector fields of $\mathbb{R}^{2n+1}$, that we denote again by $X_j, Y_j, T,  j=1, \dots, n$: for $p \in \mathbb{H}^n$,

\begin{equation}
\label{eqbase}
\begin{split}
 X_j(p) &= \partial_{p_j}-\frac{1}{2} p_{j+n} \partial_{p_{2n+1}} \ \ \ \ \ \ j=1, \dots, n\\
 Y_j(p) &= \partial_{p_{n+j}} + \frac{1}{2}p_j \partial_{p_{2n+1}} \ \ \ \ \ \ j=1, \dots, n\\
T&= \partial_{p_{2n+1}}.
\end{split}
\end{equation}

Again, the unique non-trivial relations are: $[X_j, Y_j]=T$ for $j=1, \dots, n$. 

Through the Baker-Campbell-Hausdorff formula, the group product 
takes the following polynomial form: given two points $p, \ q \in \mathbb{H}^{n}$,
\begin{equation}
\label{product}
p \cdot q= (p_1 + q_1, \dots, p_{2n+1}+q_{2n+1}+ \frac{1}{2}\sum_{j=1}^n(p_j q_{j+n}-q_jp_{j+n})).
\end{equation}

We observe that the identity element of the group is $(0, \dots, 0)$ and we denote it by $e$. 

\begin{Remark}
\label{identification_coordinates}
We will denote by $\langle \cdot, \cdot \rangle$ also the scalar product that $\mathbb{H}^n$ inherits once it is identified with $\mathfrak{h}^n$. We can notice the following:
\begin{itemize}
\item[(i)] if we fix any Heisenberg basis $ \{ V_1, \dots, V_n, W_1, \dots, W_n, P \}$ of $\mathfrak{h}^n$ and we identify $\mathbb{H}^n$ with $\mathbb{R}^{2n+1}$ as in (\ref{coordinates}), the distance induced by $\langle \cdot, \cdot \rangle$ is identified with the Euclidean distance on $\mathbb{R}^{2n+1}$, so we will denote it by $| \cdot |$;

\item[(ii)]$(\mathbb{R}^{2n+1},\cdot)$ endowed with the Euclidean scalar product represents $\mathbb{H}^{n}$ endowed with any Heisenberg basis and with the scalar product that makes it orthonormal; 

\item[(iii)]once a scalar product and a orthonormal Heisenberg basis are fixed, the change of coordinates to another orthonormal Heisenberg basis is an isometry.
\end{itemize}
\end{Remark}

According to the two steps stratification of $\mathfrak{h}^n$, the algebra, and consequently $\mathbb{H}^n$, is endowed with a family of intrinsic non-isotropic dilations: for every $\lambda >0$
\begin{equation}
\label{dilation}
\delta_{\lambda}: \mathbb{H}^n \to \mathbb{H}^n, \ 
\delta_{\lambda}( p_1, \dots, p_{2n}, p_{2n+1})= (\lambda p_1, \dots, \lambda p_{2n}, \lambda^2 p_{2n+1}).
\end{equation}

We now recall briefly how sub-Riemannian distances can be introduced in Heisenberg groups.

We can introduce a notion of length of the so-called horizontal curves. A \textit{horizontal curve} is an absolutely continuous curve defined on a real interval, $\gamma: I \subset \mathbb{R} \to \mathbb{H}^n$, whose tangent vector belongs to the fibre $H \mathbb{H}^n_{\gamma(t)}$ at almost every point $t \in I$ where the tangent vector $\dot{\gamma}(t)$ exists. Its length can be defined as $\textit{length}(\gamma)= \int \langle \gamma'(t), \gamma'(t) \rangle_{\gamma(t)}^{1/2} dt $ (for alternative equivalent definitions see \cite{Monti}). Then, one can define a distance as follows.

\begin{deff}
\label{D14}
Given $p,q \in \mathbb{H}^n$, the distance $d_c$ between $p$ and $q$ is the infimum of the lengths of horizontal curves joining $p$ and $p$:
$$d_c: \mathbb{H}^n \times \mathbb{H}^n \to [0, \infty ), $$
\begin{equation*}
d_c(p,q):= \inf \  \{ \text{length} ( \gamma) \ | \  \gamma  \ \mathrm{horizontal \ curve}, \ \gamma(0)=p, \ \gamma(T)=q \}.
\end{equation*}
$d_c$ is called the \textit{Carnot-Caratheodory metric} or, shortly, \textit{CC-distance}.
\end{deff}

The distance $d_c$ is finite and well defined thanks to the Rashevsky-Chow's theorem (see \cite{Chow}).

We collect here some fundamental properties of $d_c$. 
In particular one should stress that $d_c$ is not, even locally, equivalent to the Euclidean distance.

\begin{prop}
\label{PTOP}
For every $p, \ q, \ z \in \mathbb{H}^n$ and $\lambda>0$
\item[(i)] $d_c(p,q)=d_c(z \cdot p, z \cdot q)$;
\item[(ii)] $d_c(\delta_{\lambda}(p), \delta_{\lambda}(q))= \lambda d_c(p,q)$.
\item[(iii)] For each compact set $K \subset \mathbb{H}^n$ (with respect to the Euclidean topology) there exists a positive constant $C_K$ such that
\begin{equation*}
C_K^{-1} \ |p-q| \leq d_c(p,q) \leq C_K \  |p-q|^{\frac{1}{2}} \ \ \ \ \forall p,q \in K.
\end{equation*}
\end{prop}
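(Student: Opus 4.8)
The plan is to treat the three assertions separately. Statements (i) and (ii) are soft consequences of the fact that the Carnot--Carath\'eodory structure is built from the \emph{left-invariant} orthonormal frame $\{X_1,\dots,X_n,Y_1,\dots,Y_n\}$ and that the dilations act on this frame by a scalar factor; statement (iii), the local comparison with the Euclidean metric, is the only point requiring genuine quantitative estimates, and it splits into a lower and an upper bound whose proofs are of different nature.

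For (i), the key observation is that the left translation $L_z$ preserves the horizontal bundle $H\mathbb{H}^n$ and is an isometry for $\langle\cdot,\cdot\rangle$: by construction $dL_z(p)$ maps $H\mathbb{H}^n_p$ onto $H\mathbb{H}^n_{z\cdot p}$ and carries the orthonormal frame at $p$ to the orthonormal frame at $z\cdot p$. Hence, if $\gamma$ is a horizontal curve joining $p$ and $q$, then $L_z\circ\gamma$ is a horizontal curve joining $z\cdot p$ and $z\cdot q$ with $\mathrm{length}(L_z\circ\gamma)=\mathrm{length}(\gamma)$. This is a length-preserving bijection between the admissible curves for the two pairs of endpoints, so the infima in Definition \ref{D14} coincide and (i) follows. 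For (ii), one computes from (\ref{eqbase}) and (\ref{dilation}) that $d\delta_\lambda(p)$ sends $X_j(p)$ to $\lambda\,X_j(\delta_\lambda(p))$ and $Y_j(p)$ to $\lambda\,Y_j(\delta_\lambda(p))$. Consequently $\delta_\lambda$ maps horizontal curves to horizontal curves and, the frame being orthonormal, multiplies their length exactly by $\lambda$; again a bijection on admissible curves, whence $d_c(\delta_\lambda(p),\delta_\lambda(q))=\lambda\,d_c(p,q)$.

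For (iii) I would fix a compact $K$, enlarge it slightly to a compact $K'$, and argue as follows. \emph{Lower bound.} On $K'$ the polynomial coefficients of $X_j,Y_j$ in (\ref{eqbase}) are bounded, so for a horizontal curve $\gamma$ staying in $K'$ one has $|\dot\gamma(t)|\le C_{K'}\,\langle\dot\gamma(t),\dot\gamma(t)\rangle_{\gamma(t)}^{1/2}$; hence the Euclidean length of $\gamma$ is at most $C_{K'}\,\mathrm{length}(\gamma)$, and since the Euclidean length dominates $|p-q|$, taking the infimum over curves (those leaving $K'$ being too long to compete once $d_c(p,q)$ is small, and the inequality being trivial otherwise) yields $C_K^{-1}|p-q|\le d_c(p,q)$. \emph{Upper bound.} By (i) it suffices to estimate $d_c(e,\xi)$ with $\xi=p^{-1}\cdot q$. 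I would connect $e$ to $\xi$ by an explicit horizontal curve: first move along horizontal straight segments to adjust the $2n$ horizontal coordinates $\xi_h=(\xi_1,\dots,\xi_{2n})$ at CC cost comparable to $|\xi_h|$; then, exploiting $[X_1,Y_1]=T$ from (\ref{condizioni_basis}), close the remaining vertical gap $\xi_{2n+1}$ by a loop in a symplectic coordinate plane enclosing the prescribed signed area, whose CC length is comparable to $|\xi_{2n+1}|^{1/2}$. This gives $d_c(e,\xi)\le C\bigl(|\xi_h|+|\xi_{2n+1}|^{1/2}\bigr)$. Finally, from the group law (\ref{product}) one reads off $\xi_h=q_h-p_h$ and $|\xi_{2n+1}|\le C_K|p-q|$ for $p,q\in K$, so on $K$ both $|\xi_h|$ and $|\xi_{2n+1}|^{1/2}$ are bounded by $C_K|p-q|^{1/2}$, closing the estimate.

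The routine parts are (i), (ii) and the lower bound in (iii); the real content lies in the upper bound, specifically in the length estimate $d_c(e,\xi)\le C(|\xi_h|+|\xi_{2n+1}|^{1/2})$. The delicate point is the vertical maneuver: one must produce a horizontal loop whose enclosed area realizes the displacement $\xi_{2n+1}$ while keeping its length of order $|\xi_{2n+1}|^{1/2}$ \emph{uniformly}, which is precisely where the bracket-generating relation $[X_1,Y_1]=T$ and the anisotropy of $\delta_\lambda$ enter. A conceptual shortcut, available once one knows that $d_c$ is finite (Rashevsky--Chow) and induces the Euclidean topology, is to note that by (i) and (ii) the gauge $N(\xi)=d_c(e,\xi)$ is a left-invariant, $\delta_\lambda$-homogeneous norm, and then to invoke the equivalence of all homogeneous norms on $\mathbb{H}^n$, comparing $N$ with the explicit Kor\'anyi-type gauge $\bigl(|\xi_h|^4+\xi_{2n+1}^2\bigr)^{1/4}$, for which (iii) is immediate; this reduces the whole of (iii) to the compactness argument for equivalence of norms.
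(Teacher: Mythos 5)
The paper does not prove this proposition: it is stated as a list of ``fundamental properties'' of $d_c$, with the underlying facts delegated to the standard literature (Rashevsky--Chow for finiteness, and the remark that all left-invariant homogeneous distances are equivalent). So there is no in-paper argument to compare against; judged on its own, your proof is correct and is the standard one. Parts (i) and (ii) are exactly the bijective, length-(pre)serving correspondences on horizontal curves you describe, and the computation that $d\delta_\lambda$ scales the left-invariant horizontal frame by $\lambda$ checks out against (\ref{eqbase}) and (\ref{dilation}). In (iii) both halves are sound; two small points deserve one extra line each in a written-out version. In the lower bound, discarding competitors that exit $K'$ works because any such curve has CC-length bounded below by a fixed $\varepsilon_0>0$ (its Euclidean length exceeds $\mathrm{dist}(K,\partial K')$), while $|p-q|\le \mathrm{diam}_e(K)$, so enlarging $C_K$ handles that case --- you flag this, correctly. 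In the upper bound, after adjusting the $2n$ horizontal coordinates the remaining vertical gap is not $\xi_{2n+1}$ but $\xi_{2n+1}+O(|\xi_h|^2)$; this is harmless since $|\xi_h|^2\le C_K|p-q|$ on $K$, but it should be said. Your closing ``conceptual shortcut'' via equivalence of homogeneous norms is legitimate only after one knows $N(\xi)=d_c(e,\xi)$ is continuous for the Euclidean topology, which is essentially the upper bound of (iii) itself; you acknowledge this hypothesis, so the remark is consistent, but it does not actually save the explicit length estimate.
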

A metric satisfying (i) and (ii) is said to be a \textit{left-invariant homogeneous metric}. All left-invariant homogeneous distances on $\mathbb{H}^n$ are equivalent.
In order to make the computations easier, we fix the following homogeneous left-invariant norm:
$$ \parallel \cdot \parallel_{\infty} : \mathbb{H}^n \to \mathbb{R}, \ 
 \parallel p \parallel_{\infty} = \max \{ | ( p_1, \dots, p_{2n})|, |p_{2n+1}|^{1/2} \},$$

where $| \cdot |$ denotes both the Euclidean metric on $\mathbb{R}^{2n}$ and the absolute value on $\mathbb{R}$. Of course the norm $\parallel \cdot \parallel_{\infty}$ gives the following corresponding left-invariant homogeneous distance
$$d_{\infty} : \mathbb{H}^n \times \mathbb{H}^n \to \mathbb{R}, \  d_{\infty}(p,q):= \parallel q^{-1} \cdot p \parallel_{\infty}.$$
We now set some notations and definitions, for more details see \cite{Mattila}. For any $p \in \mathbb{H}^n$, $r>0$, $B_{\infty}(p,r):= \{ q \in \mathbb{H}^n \ | \ d_{\infty}(p,q) \leq r\}$ and for every $E \subset \mathbb{H}^n$, $\text{diam}(E):= \sup \{ \  d_{\infty}(p,q) \ | \ p,q \in E \}$.
Then we can define in $( \mathbb{H}^n, d_{\infty})$ the Hausdorff measure relative to $d_{\infty}$.

Let us define for any $ m>0$
\begin{equation*}
\beta_m:= \frac{ \pi ^\frac{m}{2}}{\Gamma( \frac{m}{2} +1)} 2^{-m} \in \mathbb{R}
\end{equation*}
where $\Gamma$ is the Euler function.

If $A \subseteq \mathbb{H}^n$, $ m \in [0, \infty)$, $\delta \in (0, \infty)$, we define the $m$-dimensional Hausdorff $\delta$-premeasure of $A$ as
\begin{equation*}
 \mathcal{H}^m_{\infty, \delta}(A):= \mathrm{inf}  \ \{  \ \sum_i \beta_m  \ (\text{diam}(E_i))^m \  |  \ A \subset \cup _{i} E_i, \ \text{diam}(E_i) \leq \delta \}.
\end{equation*}
If now we make $\delta$ go to zero, we get the \textit{$m$-dimensional Hausdorff measure} of $A$:
\begin{equation*}
 \mathcal{H}^m_{\infty}(A):= \lim_{\delta \to 0} \mathcal{H}^m_{\infty,\delta}(A).
\end{equation*}

We will instead denote by $\mathcal{H}_e^m$ the Euclidean Hausdorff measure in $\mathbb{H}^n$.

We can analogously define a similar Hausdorff measure restricting the class of sets that we can use to cover the set $A$.

If $A \subseteq \mathbb{H}^n$, $ m \in [0, \infty)$, $\delta \in (0, \infty)$, we define the spherical $m$-dimensional Hausdorff $\delta$-premeasure of $A$ as
\begin{equation*}
 \mathcal{S}^m_{\infty, \delta}(A):= \text{inf}  \ \{  \ \sum_i \beta_m  \ (\text{diam}(B_{\infty,i}))^m \  | \ B_{\infty,i}  \ \text{ball}, \  A \subset \cup _{i} B_{\infty,i}, \ \text{diam}(B_{\infty,i}) \leq \delta \}.
\end{equation*}
If now we make $\delta$ go to zero, we get the \textit{spherical $m$-dimensional Hausdorff measure} of $A$:
\begin{equation*}
 \mathcal{S}^m_{\infty}(A):= \lim_{\delta \to 0} \mathcal{S}^m_{\infty,\delta}(A).
\end{equation*}

We also recall a less known Hausdorff measure, introduced for the first time in \cite{ray}. Given $m \in [0, \infty)$, $\delta \in (0, \infty)$, $\beta_m$ as before, the $m-$dimensional centered Hausdorff measure $\mathcal{C}_{\infty}^m$ is defined as 
$$ \mathcal{C}_{\infty}^m(A):= \sup_{E \subseteq A} \mathcal{C}_{\infty,0}^m(E)$$ where
$\mathcal{C}_{\infty,0}^m(E)= \lim_{\delta \to 0+} \mathcal{C}^{m}_{\infty, \delta}(E)$, and, in turn, $\mathcal{C}^{m}_{\infty,\delta}(E)=0$ if $E = \emptyset$ and if $E \neq \emptyset$
$$C^m_{\infty, \delta}(E)= \inf \  \{ \  \sum_{i} \beta_m (\text{diam}(B_{\infty}(x_i, r_i)))^m \ : \ E \subset \cup_i B_{\infty}(x_i, r_i), \ x_i \in E, \ \text{diam}(B_{\infty}(x_i, r_i)) \leq \delta  \}.$$
It holds that 
$$
\mathcal{H}_{\infty }^m \leq \mathcal{S}^m_{\infty} \leq C^m_{\infty} \leq 2^m \mathcal{H}^m_{\infty}.
$$
In particular the three measures are equivalent (see (22) in \cite{Notesserra}).
Given a set $A \subset \mathbb{H}^n$, we can define its metric dimension as
 $$ \text{dim}_{\mathbb{H}}(A):= \sup \{ s \in (0, \infty) \ | \ \mathcal{H}^s_{\infty}(A) = \infty \}.$$
A typical phenomenon that characterizes sub-Riemannian geometry which sets it apart from the Riemannian one is that, often, the metric dimension of a set with respect to the sub-Riemannian distance does not coincide with its topological dimension. For example the dimension of $\mathbb{H}^n$ seen as topological space is $2n+1$ while its metric dimension is $2n+2$, since $\mathbb{H}^n$ is a $(2n+2)$-Ahlfors-regular metric space (see \cite{Notesserra}, Theorem 2.26). One can interpret this fact by imagining that the vertical vector field $T$ of the basis is weighted with degree 2, while horizontal vector fields have degree one.
 
Let us recall some other important definitions.

\begin{deff} If $\mathbb{H}$ is a subgroup of $\mathbb{H}^n$ closed with respect to intrinsic dilations, it is a homogeneous subgroup.
\end{deff}

\begin{deff} If $\mathbb{H}$, $\mathbb{M}$ are homogeneous subgroups such that
$\mathbb{H}^n= \mathbb{M} \cdot \mathbb{H}$ and $\mathbb{H} \cap \mathbb{M} = \{ e \}$, then $\mathbb{H}$ and $\mathbb{M}$ are called complementary subgroups. 
\end{deff}

\begin{Remark}
Since every point $p$ of $\mathbb{H}^n$ can be written in a unique way as $p= m \cdot h$, where $m \in \mathbb{M}$ and $h \in \mathbb{H}$, the projections on $\mathbb{M}$ and $\mathbb{H}$ are well defined: $\pi_{\mathbb{M}}: \mathbb{H}^n \to \mathbb{M}$, $\pi_{\mathbb{M}}(p):=m$  and $\pi_{\mathbb{H}}: \mathbb{H}^n \to \mathbb{H}$, $\pi_{\mathbb{H}}(p):=h$.
\end{Remark}

\begin{Remark}
Assuming that $\mathbb{M}$ is a normal and complemented homogeneous subgroup is equivalent to assume that its complementary subgroups $\mathbb{H}$ are horizontal, i.e. Lie$(\mathbb{H})\subseteq \mathfrak{h}_1$. By Frobenius theorem, $\mathrm{dim}(\mathbb{H})$ has to be less or equal to $n$.
\end{Remark}

Next proposition is proved in Proposition 3.2 of \cite{Arena}.
\begin{prop}
\label{Propnorme}
If $\mathbb{M}$, $\mathbb{H}$ are complementary subgroups of $\mathbb{H}^n$, there exists a constant $c_0= c_0(\mathbb{M}, \mathbb{H})>0$ such that for all $m \in \mathbb{M}$ and $h \in \mathbb{H}$ the following holds
$$ c_0 \ ( \Vert m \Vert_{\infty} + \Vert h \Vert_{\infty} ) \leq \Vert m \cdot h \Vert_{\infty} \leq \Vert m \Vert_{\infty} + \Vert h \Vert_{\infty}.$$
\end{prop}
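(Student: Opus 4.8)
The plan is to handle the two inequalities separately: the right-hand one is elementary, while the left-hand one rests on a compactness-plus-homogeneity argument whose only real content is the complementarity hypothesis $\mathbb{M} \cap \mathbb{H} = \{e\}$.

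For the upper bound $\Vert m \cdot h \Vert_{\infty} \leq \Vert m \Vert_{\infty} + \Vert h \Vert_{\infty}$, I would use nothing beyond left-invariance of $d_{\infty}$, which is immediate from its definition $d_{\infty}(p,q) = \Vert q^{-1} \cdot p \Vert_{\infty}$. Writing $\Vert m \cdot h \Vert_{\infty} = d_{\infty}(m \cdot h, e)$ and inserting the point $m$ through the triangle inequality gives $d_{\infty}(m \cdot h, e) \leq d_{\infty}(m \cdot h, m) + d_{\infty}(m, e)$, and since $d_{\infty}(m \cdot h, m) = \Vert m^{-1} \cdot m \cdot h \Vert_{\infty} = \Vert h \Vert_{\infty}$ while $d_{\infty}(m,e) = \Vert m \Vert_{\infty}$, the bound follows at once.

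For the lower bound the guiding observation is that every ingredient is homogeneous of degree one under the dilations $\delta_{\lambda}$: the norm satisfies $\Vert \delta_{\lambda} p \Vert_{\infty} = \lambda \Vert p \Vert_{\infty}$ directly from (\ref{dilation}), the subgroups $\mathbb{M}$ and $\mathbb{H}$ are $\delta_{\lambda}$-invariant by the very definition of homogeneous subgroup, and a short check on the product formula (\ref{product}) shows $\delta_{\lambda}$ is a group automorphism, so $\delta_{\lambda}(m \cdot h) = \delta_{\lambda} m \cdot \delta_{\lambda} h$. Consequently it suffices to prove the estimate on the ``unit sphere'' $K := \{ (m,h) \in \mathbb{M} \times \mathbb{H} \ | \ \Vert m \Vert_{\infty} + \Vert h \Vert_{\infty} = 1 \}$, since for a general nonzero pair one sets $s := \Vert m \Vert_{\infty} + \Vert h \Vert_{\infty} > 0$ and applies $\delta_{1/s}$ to land in $K$. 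I would set $f(m,h) := \Vert m \cdot h \Vert_{\infty}$ and aim to show that $c_0 := \min_K f$ is strictly positive. The set $K$ is compact: both $\mathbb{M}$ and $\mathbb{H}$ are closed subgroups, and the ball $\{ \Vert \cdot \Vert_{\infty} \leq 1 \}$ is Euclidean-closed and bounded, so $K$ is a closed subset of a compact product; as $f$ is continuous, it attains its minimum on $K$.

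The heart of the argument, and the step I expect to be the only delicate one, is the strict positivity of this minimum, and this is exactly where complementarity enters. If $f(m,h) = 0$ for some $(m,h) \in K$, then $m \cdot h = e$, hence $m = h^{-1} \in \mathbb{M} \cap \mathbb{H} = \{e\}$, forcing $m = h = e$ and contradicting $\Vert m \Vert_{\infty} + \Vert h \Vert_{\infty} = 1$. Therefore $c_0 > 0$. Rescaling back by $\delta_{1/s}$ and using the homogeneity of $f$ then gives $\Vert m \cdot h \Vert_{\infty} = s \, f(\delta_{1/s} m, \delta_{1/s} h) \geq c_0 \, s = c_0 (\Vert m \Vert_{\infty} + \Vert h \Vert_{\infty})$ for every pair, which is the asserted inequality with $c_0 = c_0(\mathbb{M}, \mathbb{H})$.
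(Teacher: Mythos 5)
Your proof is correct. The paper does not prove this proposition itself but defers to Proposition 3.2 of \cite{Arena}, and your argument --- subadditivity of $\Vert \cdot \Vert_{\infty}$ (equivalently the triangle inequality for $d_{\infty}$) for the upper bound, and degree-one homogeneity under $\delta_{\lambda}$ plus compactness of the set $\{\Vert m \Vert_{\infty}+\Vert h \Vert_{\infty}=1\}$ together with $\mathbb{M}\cap\mathbb{H}=\{e\}$ for the strict positivity of the minimum --- is essentially the standard proof given there.
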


With $M_{n,m}(\mathbb{R})$ we denote the space of all $n \times m$ matrix with real entries.

\begin{deff} Let us consider an open set $\Omega \subset \mathbb{H}^n$ and a function $f: \Omega  \to \mathbb{R}^k$. We say that  $ f \in C^1_{\mathbb{H}}(\Omega, \mathbb{R}^k)$ if and only if $f$ is continuous and the matrix-valued function
\begin{equation*}
\begin{split}
J_{\mathbb{H}} f: \  & \Omega \to M_{k,2n}(\mathbb{R})\\
&p \mapsto J_{\mathbb{H}}f(p):= \begin{pmatrix} X_1 f_1 & \dots & X_nf_1 & Y_1f_1 & \dots & Y_n f_1\\
\dots & \dots & \dots & \dots & \dots & \dots \\
X_1 f_k & \dots & X_n f_k & Y_1 f_k & \dots & Y_n f_k\\
\end{pmatrix}(p)\\
\end{split}
\end{equation*}
has continuous entries. This condition can be expressed more precisely by stating that we are requiring the distributional derivative $X_if_j$ to be represented by a continuous function on $\Omega$ for every $i,j$. We will denote these continuous functions again by $X_if_j$. Moreover, for a given $p \in \Omega$, the matrix $J_{\mathbb{H}}f(p)$ is called the horizontal Jacobian matrix of $f$ at $p$.
\end{deff}

\begin{Remark}
This is not usually considered as the first definition of $C^1_{\mathbb{H}}$ functions. The classical one arises from the definition of Pansu differentiability (see \cite{Pansu}) and of continuously Pansu differentiable functions. Nevertheless, in this paper we will need only this characterization (proved in \cite{Magnanicoarea}). 
\end{Remark}

\begin{deff}
Let $\mathbb{H}^n= \mathbb{M} \cdot \mathbb{H}$ be the product of two complementary subgroups. Let $\Omega$ be an open set in $\mathbb{M}$; let $$\phi: \Omega \to \mathbb{H}$$ be a function. We define its intrinsic graph as the set
$$ \mathrm{graph}(\phi) := \{ \  m \cdot \phi(m) \ | \ m \in \Omega \}.$$
We also define the graph map of $\phi$ as
$$ \Phi: \Omega \to \mathbb{H}^n, \ \Phi(m)= m \cdot \phi(m).$$
\end{deff}

\begin{Remark}

The notion of graph is intrinsic in the sense that if we translate or dilate the graph of a function $\phi$ through intrinsic translations and dilations respectively, we obtain again an intrinsic graph. In particular, if $q \in \mathbb{H}^n$, then $q \cdot \mathrm{graph}(\phi)$ is equal to $ \mathrm{graph} (\phi_q)$ for an appropriate $\phi_q$ and analogously, for any $\lambda>0$, $\delta_{\lambda}($graph($\phi$)) is equal to graph($\phi_{\lambda}$) for some $\phi_{\lambda}$; $\phi_q$ and $\phi_{\lambda}$ are well defined (see \cite{Arena}, Propositions 3.5 and 3.6).
\end{Remark}

The regularity of a graph corresponds to the regularity of its parametrization. The word intrinsic, as hinted before, means that if we translate or dilate an intrinsic object, we recover a new object with the same intrinsic properties.

Unless otherwise stated, throughout the paper $\Omega$ will denote an open set.\\

Suppose that $\mathbb{H}^n$ is the product of two complementary subgroups $\mathbb{M}$ and $\mathbb{H}$.
Given $\phi : \Omega \subset \mathbb{M} \to \mathbb{H}$ a continuous function, we can define
$$d_{\phi}: \Omega \times \Omega \to \mathbb{R}^+,$$
$$d_{\phi}(m,m'):= \parallel \pi_{\mathbb{M}}(\Phi(m')^{-1} \cdot \Phi(m) ) \parallel_{\infty}.$$

\begin{deff}
\label{DefIntLip}
Let $\mathbb{H}^n= \mathbb{M} \cdot \mathbb{H}$ be the product of two complementary subgroups. Let $\phi: \Omega \subset \mathbb{M} \to \mathbb{H}$ be a function. We say that $\phi$ is intrinsic Lipschitz if there exists a constant $c>0$ such that
\begin{equation}
\label{eqlip}
\parallel \phi(m')^{-1} \cdot \phi(m) \parallel_{\infty} \leq c \  d_{\phi}(m,m') \ \ \ \ \forall \ m,m' \in \Omega.
\end{equation}

We denote by $\mathrm{Lip}(\phi)$ the infimum of the constants $c$ for which $\mathrm{(\ref{eqlip})}$ holds.
\end{deff}

\begin{Remark}
In literature it is also often used the following symmetrized version
$$ D_{\phi}: \Omega \times \Omega \to \mathbb{R}^+$$
$$D_{\phi}(m,m')= \frac{1}{2} ( \parallel \pi_{\mathbb{M}}( \Phi(m)^{-1} \cdot \Phi(m')) \parallel_{\infty} + \parallel \pi_{\mathbb{M}}(\Phi(m')^{-1} \cdot \Phi(m)) \parallel_{\infty}).$$
For our purposes, working with $d_{\phi}$ is not restrictive: keeping in mind the notations in Definition \ref{DefIntLip}, we recall that whenever $\mathbb{H}$ is horizontal, the followings hold:
\begin{itemize}
\item $d_{\phi}(m,m') \leq 2  \ D_{\phi}(m,m') $ for every $m,m' \in \Omega$;
\item If there exists a constant $d_1>0$ such that $\parallel \phi(m')-\phi(m) \parallel_{\infty} \ \leq d_1 D_{\phi}(m,m')$ for every $m,m' \in \Omega$, then there exists a constant $d_2>0$ such that $D_{\phi}(m,m') \leq d_2 \ d_{\phi}(m,m')$ for every $m, m' \in \Omega$
\end{itemize}
(see for instance \cite{Notesserra}, Propositions 4.60 and 4.76). This means that, when $\mathbb{M}$ is a normal subgroup, the notion of Lipschitz continuity can be equivalently stated in terms of the $D_{\phi}$ or in terms of $d_{\phi}$. Clearly the relative Lipschitz constant can change.

If $\phi$ is an intrinsic Lipschitz function, $D_{\phi}$ is a quasi-distance (see \cite{DiDonato}, Prop. 2.6.11 (ii)).

\end{Remark}

\begin{deff}
Let $\mathbb{H}^n= \mathbb{M} \cdot \mathbb{H}$ be the product of complementary subgroups.
A function $$L: \mathbb{M} \to \mathbb{H}$$ is said to be intrinsic linear if its intrinsic graph is a homogeneous subgroup of $\mathbb{H}^n$. 
\end{deff}
If $\mathbb{H}$ is horizontal, this corresponds to assuming that $L$ is a group homomorphism, homogeneous of degree 1 with respect to the intrinsic dilations of the Carnot group (see \cite{Arena}, Proposition 3.23 (ii)).

\begin{deff}
\label{defintdif} Let $\mathbb{H}^n= \mathbb{M} \cdot \mathbb{H}$ be the product of complementary subgroups, let $\Omega \subset \mathbb{M}$ be an open set and take an arbitrary point $\bar{m} \in \Omega$. A function $\phi: \Omega \to \mathbb{H}$ is said intrinsic differentiable at $\bar{m} $, with $\bar{p}=\Phi(\bar{m})$, if there exists an intrinsic linear function 
$$d \phi_{\bar{m}}: \mathbb{M}\to \mathbb{H}$$ such that

$$\lim_{\parallel m \parallel_{\infty} \to 0} \frac{\parallel d \phi_{\bar{m}}(m)^{-1} \cdot \phi_{\bar{p}^{-1}}(m) \parallel_{\infty}}{ \parallel m \parallel_{\infty}}= 0.$$

The function $d \phi_{\bar{m}}$ is called the intrinsic differential of $\phi$ at $\bar{m}$.\\
The map $\phi$ is said to be intrinsic differentiable on $\Omega$ if it is intrinsic differentiable at any point of $\Omega$.
\end{deff}

\begin{Remark}
When the intrinsic differential exists, it is unique (see \cite{Diffandapprox}, Theorems 3.1.5 and 3.2.8).
\end{Remark}
If $\mathbb{H}$ is horizontal, then $\mathbb{M}$ is normal and this notion has been characterized in terms of graph-distance $d_{\phi}$.

\begin{prop} (see \cite{Notesserra}, Remark 4.75 and \cite{Arena}, Proposition 3.25)
Given $\mathbb{H}^n= \mathbb{M} \cdot \mathbb{H}$ with $\mathbb{H}$ horizontal, let $\Omega \subset \mathbb{M}$ be an open set and take an arbitrary point $\bar{m} \in \Omega$. A function $\phi: \Omega \to \mathbb{H}$ is intrinsic differentiable at $\bar{m} $ if and only if there exists an intrinsic linear map $d \phi_{\bar{m}}: \mathbb{M} \to \mathbb{H}$ such that:

\begin{equation}
\label{eqid}
\lim_{\parallel \phi(\bar{m})^{-1} \cdot \bar{m}^{-1} \cdot m \cdot \phi(\bar{m}) \parallel_{\infty} \to 0}  \frac{\parallel  d \phi_{\bar{m}}(\bar{m}^{-1} \cdot m)^{-1} \cdot \phi(\bar{m})^{-1} \cdot \phi(m) \parallel_{\infty}}{\parallel \phi(\bar{m})^{-1} \cdot \bar{m}^{-1} \cdot m \cdot \phi(\bar{m}) \parallel_{\infty}} = 0.
\end{equation}

A direct calculation shows that under these hypotheses $$d_{\phi}(m,\bar{ m})=  \parallel \phi(\bar{m})^{-1} \cdot \bar{m}^{-1} \cdot m \cdot \phi(\bar{m}) \parallel_{\infty} .$$
\end{prop}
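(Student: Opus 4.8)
The plan is to make the left translation by $\bar{p}^{-1}$ and the projection $\pi_{\mathbb{M}}$ in Definition \ref{defintdif} completely explicit, so that the intrinsic limit taken on the translated graph becomes, verbatim, the limit (\ref{eqid}) written on $\phi$ itself. Set $\bar{p}=\Phi(\bar{m})=\bar{m}\cdot\phi(\bar{m})$ and $q:=\bar{p}^{-1}=\phi(\bar{m})^{-1}\cdot\bar{m}^{-1}$. Since $\mathbb{H}$ is horizontal, $\mathbb{M}$ is normal. By the Remark on intrinsic graphs one has $q\cdot\mathrm{graph}(\phi)=\mathrm{graph}(\phi_q)$ with $\phi_q=\phi_{\bar{p}^{-1}}$, and for $m\in\Omega$ I would compute $q\cdot\Phi(m)=\phi(\bar{m})^{-1}\cdot\bar{m}^{-1}\cdot m\cdot\phi(m)=a(m)\cdot\big(\phi(\bar{m})^{-1}\cdot\phi(m)\big)$, where $a(m):=\phi(\bar{m})^{-1}\cdot\bar{m}^{-1}\cdot m\cdot\phi(\bar{m})$. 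Here $a(m)\in\mathbb{M}$ because $\bar{m}^{-1}\cdot m\in\mathbb{M}$ and $\mathbb{M}$ is normal, while $\phi(\bar{m})^{-1}\cdot\phi(m)\in\mathbb{H}$ because $\mathbb{H}$ is a subgroup; by uniqueness of the splitting $\mathbb{H}^n=\mathbb{M}\cdot\mathbb{H}$ this is exactly the $\mathbb{M}$–$\mathbb{H}$ factorization, so $\pi_{\mathbb{M}}(q\cdot\Phi(m))=a(m)$ and $\phi_q(a(m))=\phi(\bar{m})^{-1}\cdot\phi(m)$. The direct-calculation claim then drops out at once: $d_{\phi}(m,\bar{m})=\parallel\pi_{\mathbb{M}}(\Phi(\bar{m})^{-1}\cdot\Phi(m))\parallel_{\infty}=\parallel a(m)\parallel_{\infty}$, which is the asserted expression.

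Next I would match the two difference quotients. Writing $m''$ for the variable on the translated domain, the map $m\mapsto m''=a(m)=\pi_{\mathbb{M}}(q\cdot\Phi(m))$ is a homeomorphism of a neighborhood of $\bar{m}$ in $\Omega$ onto a neighborhood of $e$ (it is the composition of $\Phi$, left translation by $q$, and the projection $\pi_{\mathbb{M}}$ restricted to a graph, each a homeomorphism onto the relevant set), and it sends $\bar{m}$ to $e$. Hence the limit process $\parallel m''\parallel_{\infty}\to 0$ is equivalent to $m\to\bar{m}$, and the two denominators $\parallel m''\parallel_{\infty}$ and $\parallel a(m)\parallel_{\infty}$ are literally equal. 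For the numerators I need the identity $d\phi_{\bar{m}}(a(m))=d\phi_{\bar{m}}(\bar{m}^{-1}\cdot m)$.

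This last identity is the one genuine obstacle, and it is where the structure of $\mathbb{H}^n$ enters. Set $v:=\bar{m}^{-1}\cdot m\in\mathbb{M}$ and $t:=v^{-1}\cdot a(m)=v^{-1}\cdot\phi(\bar{m})^{-1}\cdot v\cdot\phi(\bar{m})$, which is a group commutator of $v$ and $\phi(\bar{m})$ and therefore lies in the centre $\exp(\mathfrak{h}^n_2)\subset\mathbb{M}$, since $\mathbb{H}^n$ has step $2$; thus $a(m)=v\cdot t$ with $t$ central. I would then show that any intrinsic linear $L:\mathbb{M}\to\mathbb{H}$ with horizontal target annihilates the centre. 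Indeed $L$ is a group homomorphism, homogeneous of degree one, so $L(\delta_{\lambda}t)=\delta_{\lambda}(L(t))$; as $t$ is vertical, $\delta_{\lambda}t$ scales the vertical coordinate by $\lambda^2$, while $L(t)\in\mathbb{H}$ is horizontal, so $\delta_{\lambda}(L(t))$ scales by $\lambda$. Writing $t_s$ for the central element with vertical coordinate $s$ and $g(s):=L(t_s)$ (valued in the abelian group $\mathbb{H}$), additivity of $L$ on the central line gives $g(s+s')=g(s)+g(s')$, while homogeneity gives $g(\lambda^2 s)=\lambda\,g(s)$; the two together force $g\equiv 0$, i.e. $L(t)=e$. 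Hence $d\phi_{\bar{m}}(a(m))=d\phi_{\bar{m}}(v)\cdot d\phi_{\bar{m}}(t)=d\phi_{\bar{m}}(\bar{m}^{-1}\cdot m)$.

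Putting these facts together, substituting $m''=a(m)$, $\phi_q(m'')=\phi(\bar{m})^{-1}\cdot\phi(m)$ and $d\phi_{\bar{m}}(a(m))=d\phi_{\bar{m}}(\bar{m}^{-1}\cdot m)$ transforms the quotient in Definition \ref{defintdif} into the quotient in (\ref{eqid}), with the same limiting condition. Since the two vanishing-limit statements become identical, the equivalence holds in both directions and with the same intrinsic linear map $d\phi_{\bar{m}}$, which completes the argument. Everything beyond the centre-vanishing step is bookkeeping with the group law (\ref{product}) and the uniqueness of the complementary factorization.
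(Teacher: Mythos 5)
Your argument is correct. The paper itself does not prove this proposition --- it is quoted from \cite{Notesserra} (Remark 4.75) and \cite{Arena} (Proposition 3.25) --- but your proof is exactly the natural unwinding of Definition \ref{defintdif}: the factorization $\bar{p}^{-1}\cdot\Phi(m)=a(m)\cdot\big(\phi(\bar m)^{-1}\cdot\phi(m)\big)$ with $a(m)=\phi(\bar m)^{-1}\cdot\bar m^{-1}\cdot m\cdot\phi(\bar m)\in\mathbb{M}$ (normality) and $\phi(\bar m)^{-1}\cdot\phi(m)\in\mathbb{H}$ is the standard computation behind the translated graph, and it yields both the identification of $d_\phi(m,\bar m)$ and the matching of denominators. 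The one step that genuinely needs an argument --- that $d\phi_{\bar m}(a(m))=d\phi_{\bar m}(\bar m^{-1}\cdot m)$ because $a(m)$ and $\bar m^{-1}\cdot m$ differ by a central (step-two commutator) factor, which every homomorphism into a horizontal subgroup must kill by the mismatch between $\lambda$- and $\lambda^2$-homogeneity --- is handled correctly, so the two limits coincide verbatim with the same intrinsic differential.
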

Let us give the further notion of \textit{uniformly} intrinsic differentiable function.

\begin{deff} 
\label{DEFUID1}
Let $\mathbb{H}^n= \mathbb{M} \cdot \mathbb{H}$ be the product of two complementary subgroups, with $\mathbb{H}$ horizontal, let $\Omega \subset \mathbb{M}$ be an open set and take an arbitrary point $\bar{m} \in \Omega$. A function
$ \phi: \Omega  \to \mathbb{H}$ is said to be uniformly intrinsic differentiable at $\bar{m}$ if there exists an intrinsic linear map $d \phi_{\bar{m}}: \mathbb{M}\to \mathbb{H}$ such that:

\begin{equation}
\label{equid}
\lim_{r \to 0}  \sup_{ \substack{ m',m \in B_{\infty}(\bar{m},r)\cap \Omega \\ 0 < d_{\phi}(m,m') < r}} \frac{\parallel  d \phi_{\bar{m}}(m'^{-1} \cdot m)^{-1} \cdot \phi(m')^{-1} \cdot \phi(m) \parallel_{\infty}}{\parallel \phi(m')^{-1} \cdot m'^{-1} \cdot m \cdot \phi(m') \parallel_{\infty} } =0.
\end{equation}

The map $\phi$ is said to be uniformly intrinsic differentiable on $\Omega$ if it is uniformly intrinsic differentiable at any point of $\Omega$.
\end{deff}

\begin{Remark}
\label{equivalenze}
Observe that from results in \cite{IntLipgraphs}, Lemma 2.13, for every compact subset $K \subset \Omega$ there exist two constants $C_1, C_2 >0$, such that for every $m,m' \in K$

\begin{equation}
\label{holder}
C_1 \parallel m'^{-1} \cdot m \parallel_{\infty}^2 \leq d_{\phi}(m,m') \leq C_2 \parallel m'^{-1} \cdot m \parallel_{\infty}^{\frac{1}{2}}.
\end{equation}
Hence condition (\ref{eqid}) turns out to be equivalent to the following
\begin{equation}
\lim_{m \to \bar{m}}  \frac{\parallel  d \phi_{\bar{m}}(\bar{m}^{-1} \cdot m)^{-1} \cdot \phi(\bar{m})^{-1} \cdot \phi(m) \parallel_{\infty}}{\parallel \phi(\bar{m})^{-1} \cdot \bar{m}^{-1} \cdot m \cdot \phi(\bar{m}) \parallel_{\infty}} = 0,
\end{equation}
while condition (\ref{equid}) is equivalent to 
\begin{equation}
\lim_{r \to 0}  \sup_{\substack{  m', m \in B_{\infty}(\bar{m},r) \cap \Omega \\ 0 < \parallel m'^{-1} \cdot m \parallel_{\infty} < r}} \frac{\parallel  d \phi_{\bar{m}}(m'^{-1} \cdot m)^{-1} \cdot \phi(m')^{-1} \cdot \phi(m) \parallel_{\infty}}{\parallel \phi(m')^{-1} \cdot m'^{-1} \cdot m \cdot \phi(m') \parallel_{\infty} } =0,
\end{equation}
and hence to 
\begin{equation}
\lim_{r \to 0}  \sup_{  m', m \in B_{\infty}(\bar{m},r)\cap \Omega } \frac{\parallel  d \phi_{\bar{m}}(m'^{-1} \cdot m)^{-1} \cdot \phi(m')^{-1} \cdot \phi(m) \parallel_{\infty}}{\parallel \phi(m')^{-1} \cdot m'^{-1} \cdot m \cdot \phi(m') \parallel_{\infty} } =0.
\end{equation}
\end{Remark}

Let us introduce the notion of an intrinsic regular surface of low codimension, stated for the first time in \cite{IntLipgraphsHeisen}.
\begin{deff}
Let $\mathcal{S} \subset \mathbb{H}^n$ be a set and let be $1 \leq k \leq n$. We say that $\mathcal{S}$ is a $\mathbb{H}$-regular surface of codimension $k$ if for every $p \in \mathcal{S}$ there exist an open neighbourhood $U$ that contains $p$ and a function $f \in C^1_{\mathbb{H}}(U, \mathbb{R}^k)$ such that
\begin{itemize}
\item $\mathcal{S} \cap U = \{ \  p \ | \ f(p)=0 \}$;
\item $\mathrm{rank}(J_{\mathbb{H}}f(q))=k$  for all $q \in U.$
\end{itemize}
\end{deff}

We now recall a fundamental result in this theory: an implicit function theorem for $\mathbb{H}$-regular surfaces of low codimension, proved in \cite{Areaformula} 
(and in \cite{Magnani_2013} for general Carnot groups).

\begin{teo}
\label{TI}
Let us take $1 \leq k \leq n$, $\mathcal{U} \subset \mathbb{H}^n$ open set, $p_0 \in \mathcal{U}$ and $f \in C^1_{\mathbb{H}}(\mathcal{U}, \mathbb{R}^k)$ with $f(p_0)=0$, $\mathrm{rank}(J_{\mathbb{H}} f(p_0))=k.$ Let us define $\mathcal{S}:= \{ p \in \mathcal{U} \ | \ f(p)=0 \}$.

Then there exist $V_1, \dots ,V_k \in \mathfrak{h}^n$ horizontal linear independent vector fields, such that $[V_i, V_j]=0$ for any $i,j=1, \dots, k$ and $\det( [V_jf_i]_{i,j=1, \dots k }(p_0) ) \neq 0$. 
Let us define $\mathbb{H}:= \mathrm{exp}(\mathrm{span}(V_1, \dots, V_k))$ and consider $\mathbb{M}$ any homogeneous subgroup of $\mathbb{H}^n$ complementary to $\mathbb{H}$.
Let be $p_0= \pi_{\mathbb{M}}(p_0) \cdot \pi_{\mathbb{H}}(p_0)$.
 
Then, there is an open set $\mathcal{U}' \subseteq \mathcal{U}$, with $p_0 \in \mathcal{U}'$, such that $\mathcal{S} \cap \mathcal{U}'$ is a $(2n+1-k)$-dimensional continuous graph over $\mathbb{M}$ along $\mathbb{H}$ i.e. there exists a relatively open $\Omega \subset \mathbb{M}$, $\pi_{\mathbb{M}}(p_0) \in \Omega$ and a unique continuous function $\phi: \Omega \to \mathbb{H}$, with $\phi(\pi_{\mathbb{M}}(p_0))= \pi_{\mathbb{H}}(p_0)$, such that
$$\mathcal{S} \cap \mathcal{U}'= \{ m \cdot \phi(m) \ | \ m \in \Omega \}.$$

\end{teo}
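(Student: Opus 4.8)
The plan is to reduce the statement to solving, for each fixed $m$ near $\pi_{\mathbb{M}}(p_0)$, a $k$-dimensional system $f(m\cdot h)=0$ in the unknown $h\in\mathbb{H}$, and to solve it by an implicit-function argument adapted to the weak (intrinsic) regularity of $f$. The first task is the purely algebraic selection of the complementary horizontal subgroup. Since $\mathbb{H}^n$ has step $2$, the bracket restricts to a nondegenerate skew-symmetric form $\omega$ on $\mathfrak{h}^n_1\cong\mathbb{R}^{2n}$ determined by $\omega(X_i,Y_j)=\delta_{ij}$, and a family of left-invariant horizontal vector fields commutes exactly when it spans an $\omega$-isotropic subspace, because $[V,V']=\omega(V,V')T$. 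The rows of $J_{\mathbb{H}}f(p_0)$ are $k$ linearly independent covectors $\eta_1,\dots,\eta_k\in(\mathfrak{h}^n_1)^\ast$ with $\eta_i(V)=Vf_i(p_0)$; writing $N=\bigcap_i\ker\eta_i$ (codimension $k$), the matrix $[V_jf_i](p_0)$ is invertible for a basis $V_1,\dots,V_k$ of a subspace $W$ precisely when $W\cap N=\{0\}$. So I would establish the symplectic linear-algebra lemma that, since $k\le n$, there exists a $k$-dimensional isotropic $W$ transverse to $N$; any basis of $W$ gives commuting fields $V_1,\dots,V_k$ with $\det([V_jf_i](p_0))\ne0$, and then $\mathbb{H}=\exp(\mathrm{span}(V_1,\dots,V_k))$ together with any complementary $\mathbb{M}$ fixes the splitting.

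Next I would set up the equation. Using the coordinates $h=\exp(\sum_i s_iV_i)$ on $\mathbb{H}$ (legitimate since $\mathbb{H}$ is abelian, so $\exp$ restricts to a group isomorphism onto $\mathbb{H}$) and Euclidean coordinates on $\mathbb{M}$ near $m_0:=\pi_{\mathbb{M}}(p_0)$, define $G(m,s):=f\bigl(m\cdot\exp(\sum_i s_iV_i)\bigr)$ near $(m_0,s_0)$, where $s_0$ is the unique parameter with $\exp(\sum_i s_{0,i}V_i)=\pi_{\mathbb{H}}(p_0)$, so that $G(m_0,s_0)=f(p_0)=0$. The key observation is that, although $f$ is only $C^1_{\mathbb{H}}$ and need not be Euclidean differentiable, $G$ is genuinely $C^1$ in the variable $s$: since each $V_j$ is left-invariant, its flow is the right translation $p\mapsto p\cdot\exp(tV_j)$, and the commutativity $[V_i,V_j]=0$ lets me move $\exp(s_jV_j)$ to the right end of the product, whence $\partial_{s_j}G(m,s)=(V_jf)\bigl(m\cdot\exp(\sum_i s_iV_i)\bigr)$. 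These partials are continuous in $(m,s)$ because the entries $V_jf_i$ are continuous by the very definition of $C^1_{\mathbb{H}}$; hence $G$ is continuous, $C^1$ in $s$ with continuous $\partial_sG$, and $\partial_sG(m_0,s_0)=[V_jf_i](p_0)$ is invertible.

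The analytic heart, and the step I expect to be the main obstacle, is an implicit function theorem valid under exactly these hypotheses — $G$ continuous, $C^1$ and with invertible Jacobian only in the solved variable $s$ — since the classical $C^1$ implicit function theorem is unavailable ($f$ is not Euclidean smooth in the $\mathbb{M}$-directions). I would argue as follows. By continuity of $\partial_sG$ and a quantitative inverse function theorem, after shrinking $\rho$ the map $s\mapsto G(m,s)$ is, for each fixed $m$ near $m_0$, a homeomorphism of $B(s_0,\rho)$ onto a neighborhood of $G(m,s_0)$ with estimates uniform in $m$; local injectivity already yields uniqueness. For existence, continuity of $G$ gives $G(m,s_0)\to G(m_0,s_0)=0$ as $m\to m_0$, so $0$ lies in the image for $m$ close to $m_0$ — equivalently the Brouwer degree of $G(m,\cdot)$ on $B(s_0,\rho)$ at $0$ is nonzero and locally constant in $m$ — producing a unique $\sigma(m)\in B(s_0,\rho)$ with $G(m,\sigma(m))=0$. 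The uniform estimates force $\sigma$, and hence $\phi(m):=\exp(\sum_i\sigma_i(m)V_i)\in\mathbb{H}$, to be continuous.

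Finally I would translate this back: because every $p$ in a small $\mathcal{U}'$ decomposes uniquely as $p=\pi_{\mathbb{M}}(p)\cdot\pi_{\mathbb{H}}(p)$, the relation $f(p)=0$ with $p=m\cdot h$ is exactly $G(m,s)=0$, so $\mathcal{S}\cap\mathcal{U}'=\{m\cdot\phi(m):m\in\Omega\}$ with $\Omega=\pi_{\mathbb{M}}(\mathcal{U}')$ relatively open, $\phi$ continuous, $\phi(\pi_{\mathbb{M}}(p_0))=\pi_{\mathbb{H}}(p_0)$, and $\phi$ unique by the uniqueness of $\sigma$. The $\frac12$-H\"older refinement, not needed for the statement as phrased, would then follow from the metric estimates in Proposition \ref{PTOP}(iii) and the norm comparison in Proposition \ref{Propnorme}.
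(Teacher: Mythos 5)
This theorem is not proved in the paper: it is recalled from \cite{Areaformula} (and \cite{Magnani_2013}), so there is no in-paper argument to compare against. Your proposal is, in substance, a correct reconstruction of the proof in that reference: the symplectic selection of commuting horizontal fields $V_1,\dots,V_k$ with $\det([V_jf_i](p_0))\neq 0$, the observation that $s\mapsto f(m\cdot\exp(\sum_i s_iV_i))$ is genuinely $C^1$ with Jacobian $[V_jf_i]$ evaluated along the graph, and an inverse-function/degree argument in the solved variable only, with mere continuity in $m$. Two points deserve more care than you give them. First, the transversality lemma (a $k$-dimensional $\omega$-isotropic $W$ with $W\cap N=\{0\}$ for $N$ of codimension $k\le n$) is genuinely nontrivial: the greedy construction of adding one vector at a time from $W_j^{\omega}\setminus(W_j+N)$ can get stuck on dimension grounds once $2j\ge k$, so one needs either the explicit construction of \cite{Areaformula} or a dimension count in the isotropic Grassmannian showing that the set of isotropic $k$-planes meeting $N$ has dimension at most $k(2n-k)-k$, hence is proper. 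Second, since $C^1_{\mathbb{H}}$ is defined here via distributional derivatives admitting continuous representatives, the identity $\partial_{s_j}G(m,s)=(V_jf)(m\cdot\exp(\sum_i s_iV_i))$ requires upgrading the continuous representative of $V_jf$ to a classical derivative along the flow lines of $V_j$ (standard, via mollification and the fundamental theorem of calculus along flow lines, but it should be said). With these two points filled in, the argument is complete.
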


\begin{Remark}
\label{coordinates2}
Observe that without any restriction, we can choose $V_1, \dots, V_k$ to be orthonormal vector fields. Let us consider then $ V_{k+1}, \dots, V_n, W_1, \dots, W_n, P \in \mathfrak{h}^n$ vector fields such that $\{ V_1, \dots,V_n, W_1, \dots, W_n, P \}$ is an orthonormal Heisenberg basis of $\mathfrak{h}^n$. We could consider $\mathbb{M}:=\mathrm{exp}(\mathrm{span}(V_{k+1}, \dots, V_n, W_1, \dots, W_n, P ))$.
\end{Remark}

\begin{Remark}
Theorem \ref{TI} turns into a first constraint on the class of functions among which we are searching for the right requirements for a function $\phi$ to have intrinsic graph be a $\mathbb{H}$-regular surface: we have to search for continuous functions $\phi$. Moreover, many examples in literature show that the function $\phi$ needs not to be Lipschitz, if we consider $\mathbb{M}$ and $\mathbb{H}$ endowed with the restrictions of the distance $d_{\infty}$. Indeed, the highest regularity ensured from this point of view is $\frac{1}{2}$-Holder regularity (see also (\ref{holder})). On the other hand, from an intrinsic point of view, it has been proved that $\phi$ is an intrinsic Lipschitz continuous map (see \cite{Magnani_2013}).

\end{Remark}

\section{Graphs in coordinates in Heisenberg groups}

We follow the path of \cite{Articolo}, where everything is proved for 1-codimensional $\mathbb{H}$-regular graphs in an arbitrary Heisenberg group $\mathbb{H}^n$, considering now $\mathbb{H}$-regular graphs of codimension $k$ in $\mathbb{H}^n$, for $1 \leq k \leq n$.

We consider $\mathbb{H}^n= \mathbb{M} \cdot \mathbb{H}$ as the product of the two complementary subgroups 
\begin{equation}
\label{subgroups}
\mathbb{M}= \mathrm{exp}( \mathrm{span}  ( V_{k+1}, \dots V_n, W_1, \dots, W_n, P ) ) \qquad \mathbb{H}= \mathrm{exp}( \mathrm{span} (V_1, \dots, V_k )).
\end{equation}
where $\{ V_1, \dots,V_n, W_1, \dots, W_n, P \}$ is an orthonormal Heisenberg basis of $\mathfrak{h}^n$.

For the sake of simplicity we consider $V_i=X_i,\ W_i=Y_i$ for $i=1, \dots, n$ and $ P=T$ (since we will work in coordinates, this is not restrictive: see Remark \ref{identification_coordinates}, (iii)).

We identify $\mathbb{H}^n$ with $\mathbb{R}^{2n+1}$ (see (\ref{coordinates})), so $\mathbb{M}$ and $\mathbb{H}$ can be identified (through diffeomorphisms) respectively with $\mathbb{R}^{2n+1-k}$ and $\mathbb{R}^k$.\\
In particular $\mathbb{H}$ is horizontal, hence commutative, and so it is isomorphic and isometric to some $\mathbb{R}^k$ where $k$ is the topological dimension of $\mathbb{H}$.
We can consider the diffeomorphism
$$j: \mathbb{R}^k \to \mathbb{H}, \ j(v_1, \dots, v_k)=(v_1, \dots, v_k, 0, \dots, 0).$$
The subgroup $\mathbb{M}$ is normal since it contains the vertical axis, since $\text{Lie}(\mathbb{M})$ contains the vector field $T$; the topological dimension of $\mathbb{M}$ is $2n+1-k$ while its metric dimension is $2n+2-k$. 
We can set the following natural diffeomorphism
$$ i: \mathbb{R}^{2n+1-k} \to \mathbb{M}, $$
$$ i( v_{k+1}, \dots, v_n, \eta_1, \dots, \eta_k, w_{k+1}, \dots, w_n, \tau)= (0, \dots, 0, v_{k+1}, \dots, v_n, \eta_1, \dots, \eta_k, w_{k+1}, \dots, w_n, \tau).$$

$\mathbb{R}^{2n+1-k}$ inherits in a natural way a homogeneous group structure $(\mathbb{R}^{2n+1-k}, \star, \delta_{\lambda}^{\star})$ from the group structure of $\mathbb{H}^n$:
given  $a,b \in \mathbb{R}^{2n+1-k}$, we can set the group product $ a \star b := i^{-1}(i(a) \cdot i(b))$; and given $a \in \mathbb{R}^{2n+1-k}, \ \lambda >0$ it is natural to set the dilation $\delta_{\lambda}^{\star}(a) := i^{-1}( \delta_{\lambda}(i(a)))$). 
We call a function $L: \mathbb{R}^{2n+1-k} \to \mathbb{R}^k$ $\star$-linear if it is a homomorphism with respect to the product $\star$, homogeneous of degree one with respect to $\delta_{\lambda}^{\star}$. 
 
\begin{Remark} 
\label{Rintlin}
It is immediate to show that any $\star$-linear function $L: (\mathbb{R}^{2n+1-k}, \star) \to( \mathbb{R}^k,+) $ naturally corresponds to an intrinsic linear function with respect to $\cdot$, let us call it $\tilde{L}$, $$\tilde{L}: (\mathbb{M}, \cdot) \to (\mathbb{H}, \cdot) , \ \tilde{L}(i(v))= \mathrm{exp}(L_1(v) X_1 + \dots + L_k(v) X_k)=j(L(v))$$ for every $v \in \mathbb{R}^{2n+1-k}$ (where $L_1(v)$, $\dots$, $L_k(v) \in \mathbb{R}$ denote the components of $L(v)$).
\end{Remark}
 
We consider a continuous function $\tilde{\phi}: \tilde{\Omega}=i(\Omega) \subset \mathbb{M}\to \mathbb{H}$, where $\mathbb{H}$ and $\mathbb{M}$ can be identified with $\mathbb{R}^k$ and $\mathbb{R}^{2n+1-k}$, respectively, as before.

The function $\tilde{\phi}$ uniquely corresponds to a function $\phi: \Omega \subset \mathbb{R}^{2n+1-k} \to \mathbb{R}^k$ defined by
\begin{equation}
\label{Remcorresp}
\phi(m)=j^{-1}(\tilde{\phi}(i(m))) \ \ \ \forall m \in \Omega \subset \mathbb{R}^{2n+1-k}.
\end{equation}

Hence, instead of $\tilde{\phi}$ we can consider the corresponding function $\phi$
$$ \phi: \Omega \subset (\mathbb{R}^{2n+1-k}, \star)\to (\mathbb{R}^k,+), \ (v_{k+1} \dots, v_{n}, \eta_1, \dots, \eta_k,w_{k+1}, \dots, w_n, \tau) \longmapsto (\phi_1 , , \dots, \phi_k),$$
where $\phi_j= \phi_j(v_{k+1} \dots, v_{n}, \eta_1, \dots, \eta_k,w_{k+1}, \dots, w_n, \tau)$ for $j=1, \dots, k$, and we can re-define the graph map as 
\begin{equation}
\label{graphmap}
\Phi: \Omega \subset \mathbb{R}^{2n+1-k} \to \mathbb{H}^n,
\end{equation}
\begin{align*}
& \ \Phi(v_{k+1} \dots, v_{n}, \eta_1, \dots, \eta_k,w_{k+1}, \dots, w_n, \tau ) \\
= & \ i(v_{k+1} \dots, v_{n}, \eta_1, \dots, \eta_k,w_{k+1}, \dots, w_n, \tau) \cdot j(\phi(v_{k+1} \dots, v_{n}, \eta_1, \dots, \eta_k,w_{k+1}, \dots, w_n, \tau)\\
= & \ (0,\dots, 0 ,v_{k+1} \dots, v_{n}, \eta_1, \dots, \eta_k,w_{k+1}, \dots, w_n, \tau) \cdot ( \phi_1, \dots, \phi_k ,0,\dots,0).
\end{align*}

We now want to compute $d_{\phi}$ on $\mathbb{M}$ identified with $\mathbb{R}^{2n+1-k}$:

\begin{equation}
\label{eq15}
d_{\phi}(a,b)= \parallel \pi_{\mathbb{M}} ( \Phi(b)^{-1} \cdot \Phi(a)) \parallel_{\infty}
\end{equation}

If \begin{equation}
\begin{split}
a &= (v_{k+1}, \dots, v_n, \eta_1, \dots, \eta_k, w_{k+1} \dots, w_n, \tau),\\
b &= (v_{k+1}', \dots, v_n', \eta_1', \dots,  \eta_k', w_{k+1}', \dots, w_n', \tau'),
\end{split}
\end{equation}
and  $\phi_i= \phi_i(a)$, $\phi_i'= \phi_i(b)$,
if we denote by $ \xi:= (v_{k+1}-v_{k+1}', \dots, v_n-v_n',\eta_1-\eta_1', \dots,  \eta_k-\eta_k', w_{k+1}-w_{k+1}', \dots, w_n-w_n')$, we get
\begin{equation}
\begin{split}
d_{\phi}(a,b)=\max \{ & \ | \xi | , |\tau-\tau'+ \sum_{j=1}^k \phi_j'(\eta_j'-\eta_j) + \sigma(v,w,v',w)|^{\frac{1}{2}} \}  ,
\end{split}
\end{equation}
where $\sigma(v,w,v',w'):=\frac{1}{2} \sum_{j=k+1}^n(v_jw_j'-v_j'w_j))$.

Besides transferring the structure of homogeneous group from $\mathbb{M}$ to the $\mathbb{R}^{2n+1-k}$ we are identifying it with, we can also push forward the linear vector fields which generate $\text{Lie}(\mathbb{M})$ through $i^{-1}$

\begin{equation}
\begin{split}
\tilde{X}_j &= (i^{-1})^*(X_j)= \partial_{v_j}-\frac{1}{2} w_j \partial_{\tau} \ \ \ \  j=k+1, \dots, n\\
\tilde{Y}_j &= (i^{-1})^*(Y_j)= \partial_{w_j}+\frac{1}{2} v_j \partial_{\tau} \ \  \ \ \  j=k+1, \dots, n\\
\tilde{T} \ &= (i^{-1})^*(T)= \partial_{\tau}\\
\tilde{Y}_j &= (i^{-1})^* (Y_j)=\partial_{\eta_j} \ \ \ \ \ \ \ \ \ \ \ \ \  \ \  \ \ j= 1, \dots, k.
\end{split}
\end{equation}

Now, considering Remark \ref{Rintlin} and (\ref{Remcorresp}), we can transfer the definition of intrinsic differentiability, which was first introduced for a function $\tilde{\phi}: \tilde{\Omega} \subset \mathbb{M} \to \mathbb{H}$ between two complementary subgroups in Definition \ref{defintdif}, on the corresponding function $\phi$, where $\mathbb{M}$ and $\mathbb{H}$ are identified with $ \mathbb{R}^{2n+1-k}$ and $ \mathbb{R}^k$ respectively. Observe that this definition turns out to be analogous to the definition of $W^{\phi}$-differentiability stated in \cite{Articolo}.

Let $\phi: \Omega \subset \mathbb{R}^{2n+1-k} \to \mathbb{R}^k$ be a map defined on an open set $\Omega$, $a_0 \in \Omega$, then $\phi$ is intrinsic differentiable at $a_0$ if there exists a $\star$-linear function $L: \mathbb{R}^{2n+1-k}\to \mathbb{R}^k$  such that
\begin{equation}
\label{diff}
\lim_{a \to a_0} \frac{| \phi(a)-\phi(a_0)-L(a_0^{-1} \star a) |}{d_{\phi}(a,a_0)}=0.
\end{equation}
Observe that, since $\mathbb{H}$ is horizontal, and hence commutative, it is isometric to $\mathbb{R}^k$, in (\ref{diff}), and $\parallel \cdot \parallel_{\infty}$ coincides with the the Euclidean norm on $\mathbb{R}^k$.

The function $L$ is the intrinsic differential of $\phi$ at $a_0$ and it is denoted by $d \phi_{a_0}$. 

Let $a \in \mathbb{R}^{2n+1-k}$ be a point and $a_i$ be its components, and let us take a positive constant $\delta>0$, then we define $$I_{\delta}(a):= \{ p=(p_1, \dots, p_{2n+1-k}) \in \mathbb{R}^{2n+1-k} \ | \  | p_i-a_i|  < \delta  \text{ for} \ i= 1, \dots, 2n+1-k \}.$$

We use this notation to re-state in this context the stronger notion of uniform intrinsic differentiability as well.

\begin{deff}
\label{DEFUID2}
We say that a function $\phi: \Omega \subset \mathbb{R}^{2n+1-k} \to \mathbb{R}^k$ is uniformly intrinsic differentiable at $a_0 \in \Omega$ if there exists  a $\star$-linear function $L: \mathbb{R}^{2n+1-k} \to \mathbb{R}^k$ such that
$$ \lim_{r\to 0} \sup_{\substack{ a,b \in I_r(a_0), \\  a \neq b}} \left\{  \frac{ | \phi(b)-\phi(a)-L(a^{-1} \star b) | }{d_{\phi}(b,a)} \right\}=0.$$

\end{deff}

\begin{Remark}
For every $r>0$, and for every $a_0 \in \Omega$, if we apply Proposition \ref{PTOP} (iii), the following inclusions hold
$$  I_{r}(a_0) \subseteq i^{-1}(B_{\infty}(a_0, \sqrt{(2n+1-k)r})\cap \mathbb{M}) \qquad  i^{-1}(B_{\infty}(a_0,r)) \cap \mathbb{M} \subseteq I_{r}(a_0).$$
Hence, considering Remark \ref{equivalenze}, the two notions in Definitions \ref{DEFUID1} and \ref{DEFUID2} are equivalent in our context.
\end{Remark}

A $ \star$- linear function $L: \mathbb{R}^{2n+1-k} \to \mathbb{R}^k$ (corresponding to a well defined intrinsic linear function as in Remark \ref{Rintlin}) is then uniquely identified by a $k \times (2n-k)$ matrix $M_{L}$ (see for instance \cite{DiDonatoArt}, Proposition 3.4):
$$L(m)= M_{L}  \ \pi(m)^T$$
where $\pi$ is  the projection which, up to identification, maps any point of $\mathbb{M}=i( \mathbb{R}^{2n+1-k})$ to the vector containing its horizontal coordinates: $$\pi : \mathbb{R}^{2n+1-k} \to \mathbb{R}^{2n-k}, \ \pi(p_1, \dots, p_{2n+1-k}):=(p_1, \dots, p_{2n-k}).$$
Hence, if we consider a function $\phi: \Omega \subset\mathbb{R}^{2n+1-k} \to  \mathbb{R}^k$ intrinsic differentiable at every point of $\Omega$, we can also take into consideration the function:
$$ J^{\phi} \phi : \Omega  \to M_{k , 2n-k}(\mathbb{R})$$
that associates to every point $a \in \Omega$ the matrix corresponding to the intrinsic differential of $\phi$ at $a$, $J^{\phi} \phi(a)= M_{d\phi_{a}}$. This matrix will be called the intrinsic Jacobian matrix of $\phi$ at $a$.

\begin{prop} 
\label{P1}
[see \cite{DiDonatoArt} Prop. 3.7] Let $\Omega$ be an open set and let $\phi: \Omega \subset \mathbb{R}^{2n+1-k} \to \mathbb{R}^k$ be a function uniformly intrinsic differentiable on $\Omega$, then the function $J^{\phi} \phi : \Omega \to M_{k,2n-k}(\mathbb{R})$ is continuous.
\end{prop}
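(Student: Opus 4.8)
The plan is to prove continuity of $J^{\phi}\phi$ at an arbitrary point $a_0\in\Omega$ by comparing the intrinsic differential $d\phi_c$ at a nearby point $c$ with $d\phi_{a_0}$, exploiting that, unlike ordinary intrinsic differentiability, \emph{uniform} intrinsic differentiability controls the increments of $\phi$ based at \emph{every} point near $a_0$ by the single linear map $d\phi_{a_0}$. Fix $a_0$, write $L_0:=d\phi_{a_0}$, and set
\[
\omega(r):=\sup_{\substack{a,b\in I_r(a_0)\\ a\neq b}}\frac{|\phi(b)-\phi(a)-L_0(a^{-1}\star b)|}{d_{\phi}(b,a)},
\]
which tends to $0$ as $r\to 0$ by Definition \ref{DEFUID2}. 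Since uniform intrinsic differentiability implies intrinsic differentiability, at every $c\in\Omega$ the differential $L_c:=d\phi_c$ exists and satisfies (\ref{diff}); recall from the matrix representation established before Proposition \ref{P1} that $L_0(m)=M_{L_0}\,\pi(m)^T$ and $L_c(m)=M_{L_c}\,\pi(m)^T$, so that $(L_0-L_c)(m)=(M_{L_0}-M_{L_c})\,\pi(m)^T$.

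First I would take $a=c$ in the definition of $\omega$ and subtract from it the pointwise estimate (\ref{diff}) at $c$. For $c\in I_{r/2}(a_0)$ and $b\in I_r(a_0)$ close to $c$, the triangle inequality yields
\[
|(M_{L_0}-M_{L_c})\,\pi(c^{-1}\star b)^T|\le\big(\omega(r)+\varepsilon(b)\big)\,d_{\phi}(b,c),
\]
where $\varepsilon(b)\to 0$ as $b\to c$. It then remains to feed into this inequality a family of increments $c^{-1}\star b$ whose horizontal projections span $\mathbb{R}^{2n-k}$ and along which $d_{\phi}(b,c)$ is comparable to the Euclidean horizontal displacement. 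For a horizontal unit direction $v_0$ and small $s>0$, I would choose $b=b(s)$ to be the point whose horizontal coordinates equal $\pi(c)+sv_0$ and whose vertical coordinate $\tau_b$ is selected so that the quantity under the square root in the explicit expression for $d_{\phi}$ derived just after (\ref{eq15}) vanishes; this is possible because $\tau_b$ enters that quantity linearly with unit coefficient while leaving $\pi(b)$ unchanged. With this choice $d_{\phi}(b,c)=|\pi(b)-\pi(c)|=s$, $b\to c$ as $s\to 0$, and $\pi(c^{-1}\star b)=\pi(b)-\pi(c)=sv_0$. Substituting, dividing by $s$, and letting $s\to 0$ kills $\varepsilon(b)$ and gives $|(M_{L_0}-M_{L_c})v_0^T|\le\omega(r)$. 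Ranging $v_0$ over the $2n-k$ coordinate directions, I get $|M_{L_0}-M_{L_c}|\le C\,\omega(r)$ for all $c\in I_{r/2}(a_0)$, hence $\limsup_{c\to a_0}|J^{\phi}\phi(c)-J^{\phi}\phi(a_0)|\le C\,\omega(r)$ for every small $r$; letting $r\to 0$ proves continuity at $a_0$.

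The hard part will be the construction of these ``good'' increments. Because of non‑commutativity, moving $b$ from $c$ along a fixed coordinate direction introduces a vertical correction that makes $d_{\phi}(b,c)$ of order $\sqrt{s}$ rather than $s$, which would wreck the final division by $s$. The remedy is precisely to use the one free parameter available, the vertical coordinate $\tau_b$, to cancel the sub‑Riemannian (square‑root) part of $d_{\phi}$ to the correct order; verifying this cancellation, and that it does not perturb the horizontal projection seen by the linear maps, is where the intrinsic geometry genuinely enters and is the step demanding the explicit formula for $d_{\phi}$ recorded after (\ref{eq15}). Everything else—the subtraction of the two differentiability estimates and the passage through the matrix representation $L(m)=M_L\pi(m)^T$—is routine once this comparability is in hand.
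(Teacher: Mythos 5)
Your argument is correct, and it is worth noting that the paper itself does not prove Proposition \ref{P1} at all: it simply imports the statement from \cite{DiDonatoArt}, Proposition 3.7. So you have supplied a self-contained proof where the paper offers only a citation. The two pillars of your argument both hold up. First, the comparison step is sound: taking $a=c$ in the uniform estimate at $a_0$ and subtracting the pointwise estimate at $c$ does yield $|(M_{L_0}-M_{L_c})\,\pi(c^{-1}\star b)^T|\le(\omega(r)+\varepsilon(b))\,d_{\phi}(b,c)$, using that $\star$-linear maps factor through $\pi$ and that $\pi(c^{-1}\star b)=\pi(b)-\pi(c)$ because the group law only corrects the vertical coordinate. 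Second, the construction of the test increments works exactly as you describe: in the explicit formula for $d_{\phi}(b,c)$ following (\ref{eq15}) the quantity under the square root has the form $\tau_b-\tau_c+\sum_j\phi_j(c)(\eta_j(c)-\eta_j(b))+\sigma(\cdot)$, in which the coefficient of the unknown $\tau_b$ is $1$ and the remaining terms depend only on $\phi(c)$ and the horizontal coordinates, so the equation for $\tau_b$ is genuinely linear and explicitly solvable, the correction is $O(s)$ (so $b(s)\to c$ and $b(s)\in I_r(a_0)$ for small $s$), and $d_{\phi}(b(s),c)=s$ exactly. Dividing by $s$, letting $s\to0$, ranging over the coordinate directions and then letting $r\to0$ gives continuity at $a_0$. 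This is essentially the argument one finds in Di Donato's thesis (and, in codimension one, in \cite{Articolo}); your write-up adds nothing incorrect and fills a gap the paper leaves to the literature.
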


We now recall Theorem 3.1.1 in \cite{DiDonato}. Analogous result has been proved in Theorem 4.2 in \cite{Arena}, in the proof of which, nevertheless, it was not explicitly stated relation (\ref{eq30}) that will be used later.

\begin{teo}
\label{teo1}
 Let $\mathbb{H}^n$ be the product of two complementary subgroups $$\mathbb{M}=\mathrm{exp}(\mathrm{span} ( X_{k+1}, \dots, X_n, Y_1, \dots, Y_n , T )) \qquad \text{and} \qquad \mathbb{H}=\mathrm{exp}(\mathrm{span}(X_1,...,X_k )).$$
Let $\tilde{\Omega}$ be an open set in $\mathbb{M}$, $\tilde{\phi} : \tilde{\Omega}  \to \mathbb{H}$ be a continuous function and $\mathcal{S} := \mathrm{graph}(\tilde{\phi})$. Then the following are equivalent:
\begin{itemize}
\item[ 1.] there are $U \subseteq \mathbb{H}^n$ open, and $f = (f_1,...,f_k)  \in C^1_{\mathbb{H}}(U;\mathbb{R}^k)$ such that $$\mathcal{S} = \{p  \in U : f(p) = 0 \} \qquad \text{and} \qquad det ([X_if_j]_{i,j=1, \dots, k}(p)) \neq 0,$$ for all $ p \in \mathcal{S}$.
\item[ 2.] $\tilde{\phi}$ is uniformly intrinsic differentiable on $\tilde{\Omega}$. 
\end{itemize}
\end{teo}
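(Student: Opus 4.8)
The final statement to prove is Theorem \ref{teo1}, the two-way equivalence between condition 1 (the surface $\mathcal{S}$ is locally the zero set of an $f \in C^1_{\mathbb{H}}(U;\mathbb{R}^k)$ with the $k \times k$ minor $\det([X_if_j]_{i,j})$ nonvanishing on $\mathcal{S}$) and condition 2 ($\tilde\phi$ is uniformly intrinsic differentiable on $\tilde\Omega$), for the specific splitting with $\mathbb{H}=\exp(\mathrm{span}(X_1,\dots,X_k))$ and $\mathbb{M}$ containing the remaining horizontal fields plus $Y_1,\dots,Y_k$ and $T$.

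The plan is to prove the two implications separately, transferring everything into the coordinates fixed in Section 3 so that $\tilde\phi$ becomes $\phi:\Omega\subset\mathbb{R}^{2n+1-k}\to\mathbb{R}^k$ via (\ref{Remcorresp}).

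\textbf{From 1 to 2.} Suppose $\mathcal{S}=\{f=0\}$ with $\det([X_if_j]_{i,j=1,\dots,k})\neq 0$ on $\mathcal{S}$. First I would invoke the implicit function Theorem \ref{TI}: the nonvanishing of the horizontal minor along $\mathbb{H}=\exp(\mathrm{span}(X_1,\dots,X_k))$ (note $[X_i,X_j]=0$ by (\ref{condizioni_basis})) guarantees that $\mathcal{S}$ is locally the intrinsic graph of a continuous $\tilde\phi$ over $\mathbb{M}$ along $\mathbb{H}$, so the parametrization exists and is unique. The real work is to upgrade continuity to uniform intrinsic differentiability. The strategy is to differentiate the identity $f(\Phi(m))\equiv 0$ along the vector fields of $\text{Lie}(\mathbb{M})$. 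Since $f\in C^1_{\mathbb{H}}$, the chain rule applied to $f\circ\Phi$ gives, for each component $f_\ell$, a system expressing the directional derivatives of $\phi$ along the $W^{\phi}_j$ (equivalently the pushed-forward fields $\tilde X_j,\tilde Y_j,\tilde T$) in terms of the entries of $J_{\mathbb{H}}f$ evaluated on $\mathcal{S}$. Because the $k\times k$ block $[X_if_j]$ is invertible and its entries are continuous, Cramer's rule yields that the intrinsic Jacobian $J^{\phi}\phi$ is a continuous matrix-valued function given explicitly by $-[X_if_j]^{-1}$ times the remaining horizontal derivatives. Having $J^{\phi}\phi$ continuous and $\phi$ intrinsic differentiable, I would then apply Proposition \ref{P1} in reverse spirit—more precisely, the characterization that continuity of the intrinsic Jacobian (item (iii) of Theorem \ref{teorema2}) is equivalent to uniform intrinsic differentiability—to conclude condition 2.

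\textbf{From 2 to 1.} Conversely, assume $\tilde\phi$ is uniformly intrinsic differentiable, so by Theorem \ref{teorema1} there is a family $\phi_\varepsilon\in C^1(\Omega)$ with $\phi_\varepsilon\to\phi$ and $J^{\phi_\varepsilon}\phi_\varepsilon\to M:=J^{\phi}\phi$ uniformly on compact subsets. The idea is to build the defining function $f$ directly. For each smooth approximant $\phi_\varepsilon$ one can write down an explicit $C^1_{\mathbb{H}}$ map $f_\varepsilon:U\to\mathbb{R}^k$ whose zero set is $\mathrm{graph}(\phi_\varepsilon)$ and whose horizontal derivatives along $X_1,\dots,X_k$ recover the intrinsic Jacobian—essentially $f_\varepsilon(p)=\pi_{\mathbb{H}}(p)-\phi_\varepsilon(\pi_{\mathbb{M}}(\Phi_\varepsilon^{-1}\cdot p))$ type construction. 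I would then show that the uniform convergence of $\phi_\varepsilon$ and $J^{\phi_\varepsilon}\phi_\varepsilon$ forces the $f_\varepsilon$ and their horizontal gradients $J_{\mathbb{H}}f_\varepsilon$ to converge uniformly on a suitable neighborhood $U$ of a point of $\mathcal{S}$ to a limit $f\in C^1_{\mathbb{H}}(U;\mathbb{R}^k)$ with $\{f=0\}=\mathcal{S}\cap U$; the limiting $k\times k$ minor $\det([X_if_j])$ equals (up to sign) $\det(I_k)=1\neq 0$ by the normalization built into the construction, since along $\mathbb{H}$ the map $f$ is to leading order the identity on the $\mathbb{H}$-coordinates.

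\textbf{Main obstacle.} The delicate point is the direction from 2 to 1: producing a genuinely $C^1_{\mathbb{H}}$ limit $f$ rather than merely a continuous one, and verifying that the horizontal minor is nonzero on all of $\mathcal{S}\cap U$ and not just at the base point. Controlling $J_{\mathbb{H}}f_\varepsilon$ uniformly requires that the construction of $f_\varepsilon$ interact correctly with the noncommutative group law (\ref{product})—the terms coming from the vertical coordinate and from the fields $\tilde X_j,\tilde Y_j$ with $j\le k$ versus $j>k$ must be tracked carefully so that passing to the limit keeps the off-diagonal horizontal derivatives continuous. I expect the bulk of the technical effort to lie in showing that the horizontal gradient of $f_\varepsilon$ can be written purely in terms of $\phi_\varepsilon$ and $J^{\phi_\varepsilon}\phi_\varepsilon$, so that uniform convergence of the latter transfers directly to $J_{\mathbb{H}}f_\varepsilon$, closing the argument via a standard $C^1$-limit theorem for $C^1_{\mathbb{H}}$ functions.
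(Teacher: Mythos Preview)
Your proposal has a structural circularity that breaks both directions. In this paper, Theorem~\ref{teo1} is not proved from scratch: it is \emph{recalled} from Di~Donato's work (\cite{DiDonato}, Theorem~3.1.1; see also \cite{DiDonatoArt} and \cite{Arena}), and the paper only sketches the key formulas~(\ref{eq30}) and~(\ref{eq16}). The paper then \emph{uses} Theorem~\ref{teo1} as an input to prove its own main results. In particular, Proposition~\ref{P9} explicitly invokes Theorem~\ref{teo1} to pass from a uniformly intrinsic differentiable $\phi$ to a $C^1_{\mathbb{H}}$ defining function $f$; Proposition~\ref{P9} then feeds into Proposition~\ref{P7}, which is Theorem~\ref{teorema1}; and Theorems~\ref{T6}/\ref{T7} (your Theorem~\ref{teorema2}) invoke Theorem~\ref{teo1} directly for the equivalence (i)$\Leftrightarrow$(iv). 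So when you propose to use Theorem~\ref{teorema2}(iii) to close the implication $1\Rightarrow 2$, and Theorem~\ref{teorema1} to build the defining function for $2\Rightarrow 1$, you are invoking statements whose proofs rest on Theorem~\ref{teo1} itself.

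There is also a genuine gap in your $1\Rightarrow 2$ sketch independent of the circularity. You write that the chain rule applied to $f\circ\Phi\equiv 0$ yields the directional derivatives of $\phi$ along the $W^{\phi}_j$, and then you solve via Cramer's rule. But at that stage $\phi$ is only known to be \emph{continuous} (from Theorem~\ref{TI}); $\Phi$ is not known to be differentiable in any sense, so there is no chain rule available. The actual argument in \cite{DiDonato}/\cite{DiDonatoArt} works directly with the Pansu differentiability of $f$ and the definition of uniform intrinsic differentiability to produce the intrinsic linear map and the estimate in Definition~\ref{DEFUID2}, obtaining formula~(\ref{eq30}) as a byproduct rather than as the mechanism. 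Your outline skips this step entirely: producing the candidate matrix is easy, but verifying the uniform limit in Definition~\ref{DEFUID2} is the whole content of the implication.
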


We recall some passages of the proof (for more details see Theorem 3.1.1 in \cite{DiDonato} or Theorem 4.1 in \cite{DiDonatoArt}). We consider an open set $U$ in $\mathbb{H}^n$ and a function $f \in C^1_{\mathbb{H}}(U, \mathbb{R}^k)$ as in 1. By Theorem \ref{TI}, there exists a unique and continuous intrinsic parametrization, $\tilde{\phi}: \tilde{\Omega} \subset \mathbb{M} \to \mathbb{H} $ that corresponds to a function $\phi: \Omega\subset \mathbb{R}^{2n+1-k} \to \mathbb{R}^k$, as in (\ref{Remcorresp}). For any point $m \in \Omega$, we consider that the horizontal Jacobian matrix of $f$ at $\Phi(m)$, $J_{\mathbb{H}}f (\Phi(m))$,
is of maximum rank $k$ and in particular, the following $k \times k$ matrix is invertible
$$ \textbf{X}f (\Phi(m)):=
\begin{pmatrix}
X_1f_1 & \dots & X_k f_1\\
\dots & \dots & \dots\\
X_1f_k & \dots & X_k f_k\\
\end{pmatrix}(\Phi(m)).$$
We introduce also the following $k \times (2n-k)$ matrix
$$\textbf{Y}f (\Phi(m)):=
\begin{pmatrix}
 X_{k+1} f_1 & \dots & X_n f_1 & Y_1 f_1 & \dots & Y_nf_1  \\
 \dots & \dots & \dots & \dots & \dots & \dots \\
  X_{k+1}  f_k & \dots & X_nf_k & Y_1f_1 & \dots & Y_nf_k \\
\end{pmatrix}(\Phi(m)).
$$ 
It turns out that the parametrization $\phi$ is uniformly intrinsic differentiable at every $m \in \Omega$ and 
\begin{equation}
\label{eq30}
 J^{\phi} \phi (m)= - (\textbf{X}f)^{-1} (\Phi(m))\textbf{Y}f(\Phi(m)) 
\end{equation}
that is again a $k \times (2n-k)$ matrix.\\
If, on the other side, we consider a uniformly intrinsic differentiable function $$\tilde{\phi}: \tilde{\Omega} \subset \mathbb{M} \to \mathbb{H},$$ corresponding as before to a function $\phi: \Omega \subset \mathbb{R}^{2n+1-k}\to \mathbb{R}^k$ we can find a function $f \in C^1_{\mathbb{H}}(U, \mathbb{R}^k)$, with $U$ open set containing $\Phi(\Omega)$, such that
 $$f \circ \Phi =0$$ on $\Omega$ and such that the horizontal Jacobian matrix of $f$ has the following form at the point $\Phi(m)$ of the graph of $\phi$, for every $m \in \Omega$
\begin{equation}
\label{eq16}
J_{\mathbb{H}}f(\Phi(m))=( \ \mathbb{I}_k \ | \  -J^{\phi} \phi(m) \ ),
\end{equation}
where $\mathbb{I}_k$ is the identity matrix of dimension $k$.\\

From now on, any $\phi: \Omega \subset \mathbb{R}^{2n+1-k} \to \mathbb{R}^k$ with $\Omega$ open set, is canonically associated to a $\tilde{\phi}: i(\Omega) \subset \mathbb{M} \to \mathbb{H}$ with $\mathbb{M}$ and $\mathbb{H}$ as in (\ref{subgroups}) (see again (\ref{Remcorresp})).

\begin{deff}
Given an open set $\Omega \subset \mathbb{R}^{2n+1-k}$ and a continuous function $\phi: \Omega  \to \mathbb{R}^k$, 
let us define the family of $2n-k$ first order operators:
$$ W^{\phi}_j:= \begin{cases}
\tilde{X}_{j+k} \ \ \ \ \ j= 1, \dots, n-k\\
\nabla^{\phi_{i}}:=\partial_{\eta_i}+ \phi_i \partial_{\tau} \ \ \ \ \ j= n-k+1, \dots, n \ \ \ i= j-(n-k)\\
\tilde{Y}_{j-(n-k)} \ \ \ \ \ j= n+1, \dots, 2n-k.\\
\end{cases}$$
\end{deff}

We can identify them with vector fields in the usual way. Note that the first and the last $n-k$ vector fields have smooth coefficients, while the $k$ central vector fields only have continuous coefficients.

\begin{prop}
\label{propc1}
Let $\Omega \subset \mathbb{R}^{2n+1-k}$ be an open set. If $\phi: \Omega  \to \mathbb{R}^k$ is a continuously (Euclidean) differentiable function on $\Omega$ and $m \in \Omega$, then 

\begin{equation}
\label{eq25}
 J^{\phi} \phi(m)= \begin{pmatrix} 
W^{\phi}_1 \phi_1 & \dots & W^{\phi}_{2n-k} \phi_1 \\
\dots & \dots & \dots \\
 W^{\phi}_1 \phi_k & \dots & W^{\phi}_{2n-k} \phi_k \\
\end{pmatrix}(m).
\end{equation}
\end{prop}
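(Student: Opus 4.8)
The plan is to verify the intrinsic differentiability condition (\ref{diff}) directly at the fixed point $a_0 := m$, taking as candidate intrinsic differential the $\star$-linear map $L$ whose associated matrix is $M := [\,W^{\phi}_j \phi_\ell(a_0)\,]_{\ell=1,\dots,k;\, j=1,\dots,2n-k}$. Since a $\star$-linear map is precisely one of the form $L(\cdot)=M\,\pi(\cdot)^T$, this $L$ is admissible, and showing that it realizes the limit (\ref{diff}) will simultaneously establish that $\phi$ is intrinsic differentiable at $a_0$ and that $J^{\phi}\phi(a_0)=M$, which is the assertion (\ref{eq25}). First I would record that, because the group law $\star$ alters only the degree-two coordinate $\tau$, the horizontal projection is additive: $\pi(a_0^{-1}\star a)=\pi(a)-\pi(a_0)$, so $L(a_0^{-1}\star a)=M\,(\pi(a)-\pi(a_0))^T$.

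Next, using that $\phi\in C^1$ is Euclidean differentiable at $a_0$, I would write $\phi(a)-\phi(a_0)=D\phi(a_0)(a-a_0)+R(a)$ with $R(a)=o(|a-a_0|)$, and split the numerator of (\ref{diff}) as $\big(D\phi(a_0)(a-a_0)-M(\pi(a)-\pi(a_0))^T\big)+R(a)$. For the first, \emph{structural}, bracket I expand the explicit forms $\tilde X_p=\partial_{v_p}-\frac12 w_p\partial_\tau$, $\nabla^{\phi_i}=\partial_{\eta_i}+\phi_i\partial_\tau$, $\tilde Y_p=\partial_{w_p}+\frac12 v_p\partial_\tau$ of the vector fields $W^{\phi}_j$: every purely horizontal derivative $\partial_{v_p}\phi_\ell,\partial_{\eta_i}\phi_\ell,\partial_{w_p}\phi_\ell$ cancels against the matching column of $M$, while the $\tau$-derivative has no column in $M$ (as $\pi$ drops $\tau$). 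Writing $\Delta v_p:=v_p(a)-v_p(a_0)$ and similarly for the other coordinates, what survives in the $\ell$-th component is $\partial_\tau\phi_\ell(a_0)\cdot B$, where
\begin{equation*}
B=\big(\tau(a)-\tau(a_0)\big)+\tfrac12\sum_{p=k+1}^n\big(w_p(a_0)\Delta v_p-v_p(a_0)\Delta w_p\big)-\sum_{i=1}^k\phi_i(a_0)\,\Delta\eta_i .
\end{equation*}
The key observation is that $B$ is exactly the argument inside the vertical entry of the coordinate formula for $d_\phi(a,a_0)$ (with $b=a_0$); consequently $|B|\le d_\phi(a,a_0)^2$, so the structural term divided by $d_\phi(a,a_0)$ is $O(d_\phi(a,a_0))\to 0$ as $a\to a_0$.

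It remains to control the Euclidean remainder, and here the main obstacle is the mismatch between $|a-a_0|$ and $d_\phi(a,a_0)$ caused by the anisotropy of the $\tau$-direction: $R(a)=o(|a-a_0|)$ is little-o with respect to a distance weighting $\tau$ with degree one, whereas $d_\phi$ sees $\tau$ with degree two. I would resolve this by the elementary bound $|a-a_0|\le C' d_\phi(a,a_0)$ for $a$ near $a_0$: directly from the coordinate formula $|\pi(a)-\pi(a_0)|\le d_\phi(a,a_0)$, while solving the identity above for $\tau(a)-\tau(a_0)$ gives $|\tau(a)-\tau(a_0)|\le |B|+C(a_0)|\pi(a)-\pi(a_0)|\le d_\phi(a,a_0)^2+C(a_0)\,d_\phi(a,a_0)$, the constant $C(a_0)$ absorbing the fixed coefficients $v_p(a_0),w_p(a_0),\phi_i(a_0)$; for $d_\phi(a,a_0)\le 1$ this yields the claim. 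Then $R(a)/d_\phi(a,a_0)=\big(R(a)/|a-a_0|\big)\cdot\big(|a-a_0|/d_\phi(a,a_0)\big)$, where the first factor tends to $0$ and the second stays bounded by $C'$, so the remainder contribution vanishes. Combining the two estimates gives (\ref{diff}), and hence (\ref{eq25}). One should also note that $d_\phi(a,a_0)>0$ whenever $a\neq a_0$, so no division is illegitimate: if $\pi(a)=\pi(a_0)$ then only $\tau$ differs and $B=\tau(a)-\tau(a_0)\neq 0$.
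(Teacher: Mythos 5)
Your proof is correct, but it follows a genuinely different route from the paper's. The paper proves Proposition \ref{propc1} by passing through a defining function: since $\phi\in C^1$ the graph map $\Phi$ is $C^1$, so one can choose $f\in C^1_{\mathbb{H}}$ with $f\circ\Phi=0$ and $\mathbf{X}f$ invertible along the graph, differentiate the identity $f\circ\Phi=0$ in all variables, and solve the resulting linear systems, landing on the formula $J^{\phi}\phi=-(\mathbf{X}f)^{-1}\mathbf{Y}f$ from (\ref{eq30}); this leans on the machinery of Theorem \ref{teo1}. You instead verify the definition (\ref{diff}) directly with the candidate matrix $M=[W^{\phi}_j\phi_\ell(m)]$, and the two key computations are sound: the structural mismatch between the Euclidean differential and $M\,(\pi(a)-\pi(a_0))^T$ collapses, row by row, to $\partial_\tau\phi_\ell(a_0)\cdot B$ where $B$ is precisely the vertical entry of the coordinate expression of $d_\phi(a,a_0)$, giving $|B|\le d_\phi(a,a_0)^2$; and the Euclidean remainder is absorbed via the local bound $|a-a_0|\le C'\,d_\phi(a,a_0)$, which you justify correctly by solving for $\Delta\tau$. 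Together with the uniqueness of the intrinsic differential this yields (\ref{eq25}). What your approach buys is self-containedness and an extra conclusion made explicit (intrinsic differentiability of a $C^1$ function at every point, proved from scratch rather than inherited); what the paper's approach buys is consistency with the identity $J^{\phi}\phi=-(\mathbf{X}f)^{-1}\mathbf{Y}f$, which is reused later (e.g.\ in Proposition \ref{P9} and the area formula), so the two proofs serve slightly different expository purposes. The only point worth making explicit in a final write-up is the appeal to uniqueness of the intrinsic differential when identifying $M$ with $J^{\phi}\phi(m)$, which you use implicitly.
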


\begin{proof}
Since $\phi$ is continuously differentiable in the Euclidean sense, then also the graph map $\Phi$ has the same regularity, since the group product is smooth. Hence, we can choose a function $f \in C^1_{\mathbb{H}}$ defined on an open neighbourhood of $ \Phi(\Omega)$, so that $f ( \Phi( m))= 0$ and $\textbf{X}f(\Phi(m))$ is invertible at every point $m \in \Omega$. Once we differentiate the equation $f \circ \Phi(m)=0$ at every point $m \in \Omega$ with respect to all the variables and then re-organize the equations so obtained, the thesis follows directly by solving a family of linear systems.  
\end{proof}

\section{Approximations}

\subsection{Building approximation}
Combining some arguments leading to the proof of Theorem 2.1 in \cite{Reghyper} with results in \cite{DiDonatoArt} we obtain the following result.

\begin{prop}
\label{P9}
Let $\Omega \subset \mathbb{R}^{2n+1-k}$ be an open set and let $\phi: \Omega  \to \mathbb{R}^k$ be a continuous function uniformly intrinsic differentiable at any $a \in \Omega$. Then for every $a \in \Omega$ there are $\delta = \delta (a)>0$, $\varepsilon_0>0$ and a family of functions $\{ \phi_{\varepsilon} \}_{0 < \varepsilon < \varepsilon_0} \in C^1(I_{\delta}(a), \mathbb{R}^k )$ such that $I_{\delta}(a) \Subset \Omega$ and 

\begin{equation}
 \phi_{\varepsilon} \to \phi \text{ uniformly on }I_{\delta}(a)\text{ \ as \ } \varepsilon \to 0
\end{equation}
and
\begin{equation}
J^{\phi_{\varepsilon}} \phi_{\varepsilon}
 \to J^{\phi} \phi \text{ uniformly on }I_{\delta}(a) \text{ \ as \ } \varepsilon \to 0
\end{equation}

where $J^{\phi} \phi$ denotes the matrix corresponding to the intrinsic differential of $\phi$. 
\end{prop}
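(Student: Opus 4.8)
The plan is to reduce the construction of the approximating family to a smoothing of the \emph{defining map} of the surface rather than to a direct mollification of $\phi$, exploiting the algebraic formula for the intrinsic Jacobian matrix. Since $\phi$ is uniformly intrinsic differentiable on $\Omega$, Theorem \ref{teo1} (direction $2 \Rightarrow 1$) provides, on a neighbourhood $U$ of $\Phi(a)$, a map $f \in C^1_{\mathbb{H}}(U, \mathbb{R}^k)$ with $\mathrm{graph}(\phi) \cap U = \{f = 0\}$ and $\det(\mathbf{X}f) \neq 0$ on $U$ (shrinking $U$ and using the continuity of $J_{\mathbb{H}}f$); moreover (\ref{eq30}), in the normalized form (\ref{eq16}), gives $J^{\phi}\phi(m) = -(\mathbf{X}f)^{-1}(\Phi(m))\,\mathbf{Y}f(\Phi(m))$. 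Fixing $\delta$ so that $\overline{I_{\delta}(a)} \Subset \Omega$ and $\Phi(\overline{I_{\delta}(a)})$ lies in a compact set $K \subset U$ on which $|\det \mathbf{X}f|$ is bounded below, it suffices to produce smooth maps $f_{\varepsilon}$ close to $f$ in a suitable sense and to read off $\phi_{\varepsilon}$ and $J^{\phi_{\varepsilon}}\phi_{\varepsilon}$ from them.

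First I would smooth $f$ by \emph{intrinsic} (group) convolution, $f_{\varepsilon} := f *_{\mathbb{H}} \rho_{\varepsilon}$, so that $f_{\varepsilon} \in C^{\infty}$, $f_{\varepsilon} \to f$ uniformly on $K$, and, crucially, the horizontal derivatives pass through the convolution: $\mathbf{X}f_{\varepsilon} \to \mathbf{X}f$ and $\mathbf{Y}f_{\varepsilon} \to \mathbf{Y}f$ uniformly on $K$. Group convolution, rather than Euclidean mollification, is essential here: the commutator of a Euclidean mollifier with the variable coefficient $-\tfrac{1}{2}p_{i+n}\partial_{p_{2n+1}}$ (respectively $\tfrac{1}{2}p_i\partial_{p_{2n+1}}$) appearing in $X_i$ and $Y_i$ would bring in the vertical derivative $Tf$, which is not controlled for a merely $C^1_{\mathbb{H}}$ map, whereas invariance lets the horizontal derivatives commute with the group convolution. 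For $\varepsilon < \varepsilon_0$ small, $\det \mathbf{X}f_{\varepsilon}$ is then still bounded away from $0$ on $K$.

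Next I would extract $\phi_{\varepsilon}$ by the classical implicit function theorem. Set $F_{\varepsilon}(m,s) := f_{\varepsilon}(i(m) \cdot j(s))$ for $(m,s)$ near $(a, \phi(a))$. The key computation is that, since the $X_{\ell}$ commute and $j(s) = \mathrm{exp}(\sum_{\ell} s_{\ell} X_{\ell})$, left invariance gives $\partial_{s_{\ell}}\bigl(i(m)\cdot j(s)\bigr) = X_{\ell}\bigl(i(m)\cdot j(s)\bigr)$, whence $\partial_s F_{\varepsilon}(m,s) = \mathbf{X}f_{\varepsilon}\bigl(i(m)\cdot j(s)\bigr)$, which is invertible on $K$. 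Thus $F_{\varepsilon}(m,s) = 0$ has a unique smooth solution $s = \phi_{\varepsilon}(m)$, defining $\phi_{\varepsilon} \in C^1(I_{\delta}(a), \mathbb{R}^k)$ with graph in $\{f_{\varepsilon} = 0\}$. Because $F_{\varepsilon} \to F_0$ uniformly (with $F_0(m,s) := f(i(m)\cdot j(s))$ and $F_0(m, \phi(m)) = 0$) while $\partial_s F_{\varepsilon}$ is uniformly invertible, the implicit functions converge: $\phi_{\varepsilon} \to \phi$ uniformly on $I_{\delta}(a)$, which is the first assertion, and consequently $\Phi_{\varepsilon} \to \Phi$ uniformly, where $\Phi_{\varepsilon}(m) = i(m)\cdot j(\phi_{\varepsilon}(m))$.

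Finally, since each $\phi_{\varepsilon}$ is Euclidean $C^1$ and $f_{\varepsilon}$ is smooth with $\mathbf{X}f_{\varepsilon}$ invertible on $\{f_{\varepsilon}=0\}$, Proposition \ref{propc1} (equivalently (\ref{eq30}) applied to $f_{\varepsilon}$) gives $J^{\phi_{\varepsilon}}\phi_{\varepsilon}(m) = -(\mathbf{X}f_{\varepsilon})^{-1}(\Phi_{\varepsilon}(m))\,\mathbf{Y}f_{\varepsilon}(\Phi_{\varepsilon}(m))$. It then remains to pass to the limit: from the uniform convergences $\Phi_{\varepsilon} \to \Phi$, $\mathbf{X}f_{\varepsilon} \to \mathbf{X}f$, $\mathbf{Y}f_{\varepsilon} \to \mathbf{Y}f$ on $K$, together with the uniform continuity of $\mathbf{X}f, \mathbf{Y}f$ there and the uniform lower bound on $|\det \mathbf{X}f|$ (so that matrix inversion is uniformly continuous on the relevant range), one gets $(\mathbf{X}f_{\varepsilon})^{-1}(\Phi_{\varepsilon}) \to (\mathbf{X}f)^{-1}(\Phi)$ and $\mathbf{Y}f_{\varepsilon}(\Phi_{\varepsilon}) \to \mathbf{Y}f(\Phi)$ uniformly on $I_{\delta}(a)$, hence $J^{\phi_{\varepsilon}}\phi_{\varepsilon} \to -(\mathbf{X}f)^{-1}(\Phi)\,\mathbf{Y}f(\Phi) = J^{\phi}\phi$ uniformly. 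I expect the main obstacle to be the smoothing step of the second paragraph: producing an approximation of $f$ whose \emph{full} horizontal gradient converges uniformly while only the horizontal regularity of $f$ is at hand — this is precisely what forces the use of intrinsic mollification and the careful control of the commutators that would otherwise involve the unavailable vertical derivative $Tf$.
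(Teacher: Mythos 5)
Your proposal follows essentially the same route as the paper: invoke Theorem \ref{teo1} to obtain a defining map $f\in C^1_{\mathbb{H}}$ with $\det(\mathbf{X}f)\neq 0$, mollify $f$, extract $\phi_{\varepsilon}$ by the Euclidean implicit function theorem, and pass to the limit in $J^{\phi_{\varepsilon}}\phi_{\varepsilon}=-(\mathbf{X}f_{\varepsilon})^{-1}\mathbf{Y}f_{\varepsilon}$ using (\ref{eq30}). The only point of divergence is your insistence that group convolution is \emph{essential}: the paper (following Theorem 2.1 of \cite{Reghyper}) indeed uses the group convolution with a Euclidean kernel, but a purely Euclidean mollification is not actually doomed --- the commutator with the coefficient $-\tfrac{1}{2}p_{j+n}\partial_{p_{2n+1}}$ can be integrated by parts onto the kernel and estimated by the modulus of continuity of $f$ --- so this is a matter of convenience rather than necessity, and does not affect the correctness of your argument.
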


\begin{proof}

Without any loss of generality, one can assume $a=0$, $\Phi (a)=0$. Since every uniformly intrinsic differentiable function $\phi$ locally parametrizes a $\mathbb{H}$-regular graph (see Theorem \ref{teo1}), we can assume that there exist $r >0$ and a function $f \in C_{\mathbb{H}}^1(U(0,r), \mathbb{R}^k)$ such that
 $$ f \circ \Phi=0 \text{ on }I_{\bar{\delta}}(0),$$
where $\bar{\delta}>0$ is taken so that we have the inclusion $\Phi(I_{\bar{\delta}}(0)) \subset U(0,r)$.
Moreover, again by Theorem \ref{teo1}, the horizontal Jacobian matrix of $f$ has rank $k$ and in particular we can assume that on an open set $U(0,r')$ (with $r' \leq r$) $\det(\textbf{X}f)>0$.
We then consider $$f: U(0,r') \to \mathbb{R}^k, \  p \longmapsto (f_1(p), \dots,  f_k(p)).$$ 
We consider a Euclidean Friedrichs' mollifier $\rho_{\varepsilon}$ and for every $\varepsilon >0$ we convolve the components of the function $f$ with $\rho_{\varepsilon}$ and we set:
$$f_{\varepsilon}: U(0,r') \to \mathbb{R}^k, \ p \longmapsto (f_{\varepsilon,1}(p), \dots, f_{\varepsilon,k}(p)),$$
where $f_{\varepsilon,i}(p)= f_i \ast \rho_{\varepsilon}$ for $i=1, \dots, k$.

The proof then mirrors the one of Theorem 2.1 in \cite{Reghyper} (see also Proposition 2.22 in \cite{Articolo}). We report here the scheme for reader's convenience.

In particular we obtain a family of functions $f_{\varepsilon} \in C^1$ that converge uniformly to $f$ on the compact subsets of $U(0,r')$ and whose derivatives $X_j f_{\varepsilon,i}$ converge uniformly to $X_jf_i$ for every $i=1, \dots, k$, $j=1, \dots, 2n-k$. One can assume that when $\varepsilon$ is small enough, $\det(\textbf{X}f_{\varepsilon})>0$ on $U(0,r')$. Hence, through the Euclidean implicit function theorem, for every $\varepsilon>0$ small enough ($\varepsilon< \varepsilon_0$) one obtains a Euclidean continuously differentiable function $\phi_{\varepsilon}:I_{\delta}(0) \subset \mathbb{R}^{2n+1-k} \to \mathbb{R}^k$ such that $f_{\varepsilon}(\mathrm{graph}(\phi_{\varepsilon}))=0$. Notice that the maps $\{ \phi_{\varepsilon} \}_{0 < \varepsilon < \varepsilon_0} $ are defined on a common neighbourhood $I_{\delta}(0)$ of $0$. These functions converge uniformly to $\phi$ on $I_{\delta}(0)$ as $\varepsilon$ goes to zero. According to the previous convergence statements and to Theorem \ref{teo1}, we have that
$J^{\phi_{\varepsilon}} \phi_{\varepsilon}= - (\textbf{X}f_{\varepsilon})^{-1}(\Phi_{\varepsilon}(m)) (\textbf{Y} f_{\varepsilon} )(\Phi_{\varepsilon}(m)) $ converges to $ -(\textbf{X}f)^{-1}(\Phi(m)) (\textbf{Y}f) (\Phi(m))$
uniformly on $I_{\delta}(0)$ as $\varepsilon$ goes to zero. Now it suffices to remember that, according to (\ref{eq30}) of the proof of Theorem \ref{teo1}  $$-(\textbf{X}f)^{-1}(\Phi(m)) (\textbf{Y}f) (\Phi(m))= J^{\phi} \phi(m).$$ \qedhere
\end{proof}
By the same argument presented in the proof of Theorem 5.1 in \cite{Articolo}, this local approximation can be extended to a global one.
\subsection{Existence of approximations implies existence of exponential maps}

One needs to give meaning to the action of the vector fields $W^{\phi}_j$ on the components of $\phi$. In order to do so, one could consider the behaviour of $\phi$ along the integral curves of $W^{\phi}_j$. Since $\phi$ is only continuous, integral curves of the vector fields $W^{\phi}_j$ for $j=n-k+1, \dots, n$ are not unique in general. Nevertheless, once we fix an initial point, the existence of these curves is ensured by Peano-Picard's theorem. For this reason, the authors in \cite{Articolo}, have introduced the notion of a family of exponential maps. Here is an analogous definition generalized to our setting.
 
\begin{deff}[Family of exponential maps]
\label{D1}
Let $\Omega \subset \mathbb{R}^{2n+1-k}$ be an open set and let $\phi: \Omega  \to \mathbb{R}^k$ be a continuous function. We assume that for any $a \in \Omega$ there exist $ 0 < \delta_2 < \delta_1$ such that for each $j=1, \dots , 2n-k$ there exists a map
$$\gamma^j: [-\delta_2, \delta_2] \times \overline{I_{\delta_2}(a) }\to \overline{I_{\delta_1}(a) }$$
$$ (s,b) \longmapsto \gamma^j_b(s)$$
such that:
\begin{itemize}
\item $\gamma_b^j := \gamma^j(\cdot, b) \in C^1([ - \delta_2, \delta_2], \overline{I_{\delta_1}(a)})\mathrm{ \ for \  any \  }b \in \overline{I_{\delta_2}(a) }
$;
\item $\dot{\gamma}_b^j(s)= W^{\phi}_j(\gamma_b^j(s))$, $\forall s \in [- \delta_2, \delta_2] $, $ \gamma^j_b(0)=b$;
\item there exist $k \times (2n-k)$ continuous functions $\omega_{i,j}: \Omega \to \mathbb{R}$ ($i=1, \dots, k; \ j=1, \dots, 2n-k$) such that for each $s \in [- \delta_2, \delta_2]$,
\begin{equation}
\label{catena}
\phi_i(\gamma_b^j(s))- \phi_i(\gamma_b^j(0))= \int_0^s\omega_{i,j}(\gamma_b^j(r))dr.
\end{equation}
\end{itemize}
From now on $\gamma_b^j(s)$ will be denoted as $\mathrm{exp}_a(s W^{\phi}_j)(b)$. $\{ \gamma^j \}_{j=1, \dots, 2n-k}$ are called a family of exponential maps near $a$.
\end{deff}

\begin{Remark}
\label{Remeucl}
If the function $\phi$ is continuously (Euclidean) differentiable, once we fix an initial point $b \in \Omega$, for any $j=1 , \dots, 2n-k$, there exists a unique maximal integral curve $\gamma_b^j(s)$ of $W^{\phi}_j$ starting at $b$. In this case the role of the function $\omega_{i,j}$ is played by the derivative $$\frac{d }{d s} \phi_i(\gamma_b^j(s))$$ for $i=1, \dots, k$.
\end{Remark}

If the continuous function $\phi: \Omega \subset \mathbb{R}^{2n+1-k} \to \mathbb{R}^k$ along with its intrinsic Jacobian matrix can be uniformly approximated by a family of continuously Euclidean differentiable functions along with their intrinsic Jacobian matrix respectively, then for every point $a$ in $\Omega$ there exists a family of exponential maps near $a$.

\begin{prop}
\label{P6}
Let $\Omega \subset \mathbb{R}^{2n+1-k}$ be an open set and
let $\phi: \Omega  \to \mathbb{R}^k$ be a continuous function. Let us assume that there exists a family of functions  $ \{ \phi_{\varepsilon} \} \subset C^1(\Omega) $ such that:
\begin{equation}
\phi_{\varepsilon} \to \phi \text{ uniformly on every }\Omega' \Subset \Omega\\
\end{equation}
\begin{equation}
J^{\phi_{\varepsilon}}\phi_{\varepsilon} \to M  \text{ uniformly on every }\Omega' \Subset \Omega
\end{equation}
as $\varepsilon \to 0$, where $M \in C^0(\Omega, M_{k,2n-k}(\mathbb{R}))$ is a continuous matrix valued function 
\begin{equation*}
\begin{split}
M : \ & \Omega \to M_{k,2n-k}(\mathbb{R}) ,\\
&m \longmapsto M(m)= \begin{pmatrix}
m_{1,1}(m) & \dots & m_{1,2n-k}(m)\\
\dots & \dots& \dots\\
m_{k,1}(m) & \dots & m_{k,2n-k}(m)\\
\end{pmatrix}.
\end{split}
\end{equation*}
Then for every $a \in \Omega$, there exists $ 0 < \delta_2< \delta_1$ such that for each $\ell= 1, \dots, 2n-k$ for all $(s,b) \in [- \delta_2, \delta_2] \times \overline{I_{\delta_2}(a) }$, there exists $\mathrm{exp}_a(s W^{\phi}_{\ell})(b) \in \overline{I_{\delta_1}(a) } \Subset \Omega$, moreover the continuous functions in Definition \ref{D1} will be:

$$ \omega_{i,\ell}(b):= m_{i,\ell}(b)=\frac{d}{ds}\phi_i(\mathrm{exp}_a(s W^{\phi}_{\ell})(b))\big{|}_{s=0}$$

for $i=1, \dots, k$ and $\ell =1, \dots, 2n-k$.
\end{prop}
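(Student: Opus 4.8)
The plan is to build the curves $\gamma^\ell_b$ as limits of the \emph{unique} integral curves of the smooth approximating vector fields $W^{\phi_\varepsilon}_\ell$, and then to pass to the limit in the fundamental-theorem-of-calculus identity satisfied by the $C^1$ maps $\phi_\varepsilon$ along these curves. First I would fix $a \in \Omega$ and choose $\delta_1 > 0$ with $\overline{I_{\delta_1}(a)} \Subset \Omega$. On this compact box the uniform convergence $\phi_\varepsilon \to \phi$ forces the $\phi_\varepsilon$ to be equibounded, so the coefficients of the central fields $W^{\phi_\varepsilon}_\ell = \partial_{\eta_i} + (\phi_\varepsilon)_i \partial_\tau$ are bounded by a constant $C$ independent of $\varepsilon$ and $\ell$, while the remaining fields $\tilde X_{j+k},\tilde Y_{j-(n-k)}$ are fixed and smooth, hence also bounded. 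I would then pick $\delta_2 \in (0,\delta_1)$ small enough (depending only on $C$, e.g.\ $\delta_2 \le \delta_1/(1+C)$) that any solution of $\dot\gamma = W^{\phi_\varepsilon}_\ell(\gamma)$ issued from a point of $\overline{I_{\delta_2}(a)}$ cannot leave the box $\overline{I_{\delta_1}(a)}$ before time $\delta_2$; since the velocity bound is uniform, this yields one $\delta_2$ valid for all $\ell$, all $b \in \overline{I_{\delta_2}(a)}$, and all small $\varepsilon$.

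For the $2(n-k)$ indices with $\ell \in \{1,\dots,n-k\} \cup \{n+1,\dots,2n-k\}$ the argument is short, because $W^{\phi_\varepsilon}_\ell = W^\phi_\ell$ does not involve $\phi$: the integral curve $\gamma^\ell_b$ through $b$ is unique and $\varepsilon$-independent. Applying the fundamental theorem of calculus to $\phi_\varepsilon$ along $\gamma^\ell_b$ and using Proposition \ref{propc1} gives $(\phi_\varepsilon)_i(\gamma^\ell_b(s)) - (\phi_\varepsilon)_i(b) = \int_0^s (J^{\phi_\varepsilon}\phi_\varepsilon)_{i,\ell}(\gamma^\ell_b(r))\,dr$. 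Letting $\varepsilon \to 0$ and invoking $\phi_\varepsilon \to \phi$ and $(J^{\phi_\varepsilon}\phi_\varepsilon)_{i,\ell} \to m_{i,\ell}$ uniformly on $\overline{I_{\delta_1}(a)}$ produces the chain identity (\ref{catena}) with $\omega_{i,\ell} = m_{i,\ell}$.

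The central indices $\ell = n-k+1,\dots,n$ are the heart of the matter, since now $W^{\phi_\varepsilon}_\ell$ genuinely varies with $\varepsilon$ and integral curves of the limit field $W^\phi_\ell$ need not be unique. For fixed $b$, the unique $C^1$ curves $\gamma^{\ell,\varepsilon}_b$ are uniformly Lipschitz (velocity $\le C$) and remain in the compact box, hence are equibounded and equicontinuous; by Arzel\`a--Ascoli I extract a subsequence $\varepsilon_m \to 0$ with $\gamma^{\ell,\varepsilon_m}_b \to \gamma^\ell_b$ uniformly. Writing $\gamma^{\ell,\varepsilon}_b(s) = b + \int_0^s W^{\phi_\varepsilon}_\ell(\gamma^{\ell,\varepsilon}_b(r))\,dr$ and passing to the limit — using $(\phi_\varepsilon)_i \to \phi_i$ uniformly together with the uniform convergence of the curves — shows $\gamma^\ell_b$ solves $\dot\gamma = W^\phi_\ell(\gamma)$, $\gamma(0)=b$, so it is a legitimate integral curve in the sense of Definition \ref{D1}. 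The chain identity follows as before: from $(\phi_{\varepsilon_m})_i(\gamma^{\ell,\varepsilon_m}_b(s)) - (\phi_{\varepsilon_m})_i(b) = \int_0^s (J^{\phi_{\varepsilon_m}}\phi_{\varepsilon_m})_{i,\ell}(\gamma^{\ell,\varepsilon_m}_b(r))\,dr$ I pass to the limit on both sides, the point being that the integrand $(J^{\phi_{\varepsilon_m}}\phi_{\varepsilon_m})_{i,\ell}(\gamma^{\ell,\varepsilon_m}_b(r))$ converges uniformly in $r$ to $m_{i,\ell}(\gamma^\ell_b(r))$; this splits into a term controlled by the uniform convergence $(J^{\phi_\varepsilon}\phi_\varepsilon)_{i,\ell}\to m_{i,\ell}$ and a term controlled by the uniform continuity of $m_{i,\ell}$ on the box composed with $\gamma^{\ell,\varepsilon_m}_b \to \gamma^\ell_b$, again giving (\ref{catena}) with $\omega_{i,\ell}=m_{i,\ell}$.

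Finally, since in every case the integrand $m_{i,\ell}$ is continuous, differentiating (\ref{catena}) at $s=0$ yields $\frac{d}{ds}\phi_i(\mathrm{exp}_a(sW^\phi_\ell)(b))\big|_{s=0} = m_{i,\ell}(b)$, the asserted formula. I expect the central-index case to be the main obstacle: one must carefully justify that the chain-rule identity survives the \emph{simultaneous} limit of the integrand and of the curve, and keep in mind that the extracted subsequence may depend on $b$ and $\ell$. The latter is harmless, because Definition \ref{D1} only requires the pointwise existence of one curve per base point, while the uniform escape-time estimate from the first step guarantees that all of these curves indeed land in $\overline{I_{\delta_1}(a)} \Subset \Omega$.
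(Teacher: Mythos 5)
Your proposal is correct and follows essentially the same route as the paper: the non-central fields are handled by the fundamental theorem of calculus applied to $\phi_{\varepsilon}$ along the (unique, $\varepsilon$-independent) integral curves and passing to the uniform limit, while the central fields are treated via Arzel\`a--Ascoli applied to the integral curves of $W^{\phi_{\varepsilon}}_{\ell}$, exactly as in the argument the paper borrows from Lemma 5.6 of \cite{Articolo}. Your write-up simply makes explicit the uniform velocity bound, the escape-time estimate, and the simultaneous limit of curve and integrand that the paper leaves to the reader.
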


\begin{proof}
The proof mirrors the one of Lemma 5.6. in \cite{Articolo}.

For any of the first and last $(n-k)$ vector fields $W^{\phi}_j$ ($j=1, \dots, n-k, n+1, \dots, 2n-k$), the exponential map $\mathrm{exp}_a(s W^{\phi}_j(b))$ coincides with the usual exponential map. Notice that in this case the curve coincides with the unique integral curve of the vector field $W^{\phi_{\varepsilon}}_j$.
 Hence, applying the fundamental theorem of calculus to any map $\phi_{\varepsilon,i}(\mathrm{exp}_a(s W^{\phi}_j(b)))$ ($i$-th component of $\phi_{\varepsilon}$), for $i=1, \dots, k$, the thesis follows since all the convergences are uniform. In particular the role of the maps $\omega_{i,j}$ will be played by the uniform limit of the functions $X_i f_{\varepsilon,j}=[J^{\phi_\varepsilon} \phi_{\varepsilon}]_{i,j}$ that corresponds to the continuous function $m_{i,j}$.

Let us now consider the vector fields $W^{\phi}_j$, for $j=n-k+1, \dots, n$. In this case we can easily borrow the argument presented in \cite{Articolo}. It is basically an application of the Ascoli-Arzelà theorem; in \cite{Articolo} is formulated for the unique vector field they have in that case, the extension to our case is immediate.
\end{proof} 

\subsection{Existence of the approximation implies little-Holder continuity}

For $j=1, \dots, n-k, n+1, \dots, 2n-k$, once we fix an initial point $a \in \Omega$, the integral curve of $W^{\phi}_j$ starting at $a $, $\gamma^j$, is unique thanks to the Cauchy theorem. For $j= n-k+1, \dots, n$ instead we lose the uniqueness; the existence is ensured by Peano-Picard's theorem, since the coefficients of $W^{\phi}_j$ are continuous.
Hence, if we only assume that $\phi$ is continuous, the value of the limit (\ref{limite}) depends a priori on the choice of the integral curve. Then, it makes sense to introduce the following definition.

\begin{deff}
Let $\Omega \subset \mathbb{R}^{2n+1-k}$ be an open set and let $\phi: \Omega \to \mathbb{R}^k$ be a continuous function. Let $a$ be a point in $\Omega$. Given $j \in \{ 1, \dots, 2n-k \}$, we say that $\phi$ has $\partial^{\phi_j}$- derivative at $a$ if and only if there exists a vector in $\mathbb{R}^k$, $\begin{pmatrix}
\alpha_{1,j} &
\dots &
\alpha_{k,j}
\end{pmatrix}$ such that for any $\gamma^j: (-\delta, \delta) \to \Omega$ integral curve of $W^{\phi}_j$ such that $\gamma^j(0)=a$, the limit  
$ \lim_{s \to 0} \frac{\phi( \gamma^j(s))- \phi(a)}{s}$ exists and is equal to $ \begin{pmatrix}
\alpha_{1,j} &
\dots &
\alpha_{k,j}
\end{pmatrix}
$.

We denote by $$
\partial^{\phi_j} \phi (a)  = 
\begin{pmatrix}
\partial^{\phi_j} \phi_1\\
\dots\\
\partial^{\phi_j} \phi_k
\end{pmatrix}(a):=
\begin{pmatrix}
\alpha_{1,j}\\
\dots\\
\alpha_{k,j}.
\end{pmatrix}.$$
for $j= 1, \dots, 2n-k$.
\end{deff}

Nevertheless, if the function $\phi$ is intrinsic differentiable at a point $a \in \Omega$, the limit $\lim_{s \to 0} \frac{\phi_i(\gamma^j(s))-\phi_i(\gamma^j(0))}{s}$ does not depend on the choice of the integral curve of $W^{\phi}_j$, $\gamma^j$, starting at $a$.
\begin{prop}
\label{P2}
Let $\Omega \subset \mathbb{R}^{2n+1-k}$ be an open set and let $\phi: \Omega  \to \mathbb{R}^k$ be a continuous function. Let $ a \in \Omega$ and let $\phi$ be intrinsic differentiable at $a$ and let $J^{\phi} \phi(a)$ be the $k \times (2n-k)$ matrix that identifies the intrinsic differential at $a$.\\
Let $j \in \{ 1, \dots, 2n-k\}$ and let 
$$ \gamma^j: [-\delta, \delta]\to \Omega$$ be an arbitrary integral curve
of the vector field $W^{\phi}_j$, with $\gamma^j(0)=a$. Then for any $i \in \{ 1, \dots, k \}$ we have that
\begin{equation}
\label{limite}
 \lim_{s \to 0} \frac{\phi_i(\gamma^j(s))-\phi_i(\gamma^j(0))}{s}= [J^{\phi} \phi(a)]_{ij}.
\end{equation}
\end{prop}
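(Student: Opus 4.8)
The plan is to run the definition of intrinsic differentiability at $a$ along the curve and reduce the statement to the control of a single remainder. Write $M:=J^{\phi}\phi(a)$ and let $L=d\phi_a$ be the intrinsic differential, so that $L(m)=M\,\pi(m)^T$ and, by (\ref{diff}), for $b$ near $a$
$$\phi(b)-\phi(a)=L(a^{-1}\star b)+R(b),\qquad |R(b)|\le \varepsilon\bigl(d_{\phi}(b,a)\bigr)\,d_{\phi}(b,a),$$
where (using the lower bound in (\ref{holder}) to turn the pointwise limit into a bound on sublevel sets of $d_\phi$) one has $\varepsilon(t)\to 0$ as $t\to 0$. Substituting $b=\gamma^j(s)$, the first move is to compute the horizontal projection $\pi(a^{-1}\star\gamma^j(s))$: since the product $\star$ only corrects the vertical coordinate $\tau$, the horizontal coordinates of $a^{-1}\star\gamma^j(s)$ are the plain differences of those of $a$ and $\gamma^j(s)$; as $\gamma^j$ is an integral curve of $W^{\phi}_j$ exactly one horizontal coordinate moves and at unit speed, so $\pi(a^{-1}\star\gamma^j(s))=s\,e_j$, the $j$-th canonical vector of $\mathbb{R}^{2n-k}$. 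Hence $L_i(a^{-1}\star\gamma^j(s))=s\,[M]_{ij}$ exactly, and (\ref{limite}) is equivalent to $R_i(\gamma^j(s))/s\to 0$.

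Next I would estimate $d_{\phi}(\gamma^j(s),a)$ from the explicit formula for $d_{\phi}$. For the $2(n-k)$ indices with smooth coefficients, $j\in\{1,\dots,n-k\}\cup\{n+1,\dots,2n-k\}$, a direct check shows the vertical term of $d_\phi$ cancels identically along the corresponding integral curve (the $\partial_\tau$-part of $W^\phi_j$ is exactly compensated by $\sigma$), so $d_{\phi}(\gamma^j(s),a)=|s|$; then $|R_i(\gamma^j(s))|/|s|=o(|s|)/|s|\to 0$ and these indices are settled. For the $k$ central fields $W^{\phi}_j=\nabla^{\phi_i}=\partial_{\eta_i}+\phi_i\partial_{\tau}$, with $i=j-(n-k)$, the vertical term does not cancel, and the computation gives
$$d_{\phi}(\gamma^j(s),a)=\max\Bigl\{|s|,\ \Bigl|\int_0^s\bigl(\phi_i(\gamma^j(r))-\phi_i(a)\bigr)\,dr\Bigr|^{1/2}\Bigr\},$$
so that, setting $\mu(s):=\sup_{|r|\le|s|}|\phi_i(\gamma^j(r))-\phi_i(a)|$, one obtains $d_{\phi}(\gamma^j(s),a)\le\max\{|s|,\sqrt{|s|\,\mu(s)}\}$.

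The main obstacle is precisely here: since the sharp a priori regularity of $\phi$ is only $\tfrac12$-Hölder, $\mu(s)/|s|$ may diverge, and the crude bound $R_i(\gamma^j(s))=o(d_\phi)$ fails to give $o(s)$. I would close the gap with a self-improving (Gronwall-type) estimate. Put $g(r):=\phi_i(\gamma^j(r))-\phi_i(a)-r[M]_{ij}=R_i(\gamma^j(r))$ and $G(s):=\sup_{|r|\le|s|}|g(r)|$; then $\mu(s)\le C|s|+G(s)$ with $C=|[M]_{ij}|$, and combining with the remainder bound (and $\sqrt{a+b}\le\sqrt a+\sqrt b$) yields, for an absolute constant $C'$,
$$G(s)\ \le\ \eta(s)\bigl(C'|s|+\sqrt{|s|\,G(s)}\bigr),\qquad \eta(s):=\sup_{|r|\le|s|}\varepsilon\bigl(d_{\phi}(\gamma^j(r),a)\bigr)\xrightarrow[s\to0]{}0,$$
where $\eta(s)\to0$ because $d_\phi(\gamma^j(r),a)\to0$ uniformly for $|r|\le|s|$. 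Reading this as a quadratic inequality in $\sqrt{G(s)}$ and solving gives $G(s)\le|s|\,h(\eta(s))$ with $h(\eta)\to0$ as $\eta\to0$; hence $g(s)/s\to0$, i.e. $R_i(\gamma^j(s))/s\to0$, completing the central case. Since the whole argument uses only the continuity of $\phi$ and the integral identity for $\nabla^{\phi_i}$ — and never the uniqueness of $\gamma^j$ — it simultaneously shows that the limit equals $[J^{\phi}\phi(a)]_{ij}$ and is independent of the chosen integral curve.
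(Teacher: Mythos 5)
Your proof is correct, and it follows the same overall skeleton as the paper's: reduce (\ref{limite}) to showing that the differentiability remainder is $o(s)$ along the curve, observe that $\pi(a^{-1}\star\gamma^j(s))=s\,e_j$, dispose of the $2(n-k)$ smooth fields by the exact cancellation $d_{\phi}(\gamma^j(s),a)=|s|$ (this is the paper's computation (\ref{eq7})), and then handle the central fields $\nabla^{\phi_\ell}$ by an absorption argument that upgrades the a priori $\sqrt{|s|}$-type control of the vertical increment to the linear bound needed to divide by $s$. The difference lies in how that absorption is run. The paper first quotes the fact that intrinsic differentiability at $a$ yields a local intrinsic Lipschitz estimate $|\phi(m)-\phi(a)|\le C\,d_{\phi}(m,a)$, feeds it into the integral expression for the vertical coordinate, and absorbs $\sup_s d_{\phi}(\gamma^j(s),a)$ to get $d_{\phi}(\gamma^j(s),a)\le C_2|s|$ as in (\ref{eq8}); only then does it invoke the differentiability quotient, as in (\ref{eqdiprima}). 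You instead bootstrap directly on the remainder $G(s)=\sup_{|r|\le|s|}|R_i(\gamma^j(r))|$, splitting $\mu(s)\le C|s|+G(s)$ and solving the resulting quadratic inequality $G\le\eta(C'|s|+\sqrt{|s|G})$. This buys a more self-contained argument (no appeal to the external Lipschitz estimate, only to the lower bound in (\ref{holder}) to convert the pointwise limit in (\ref{diff}) into a modulus $\varepsilon(t)$), and it delivers the linear bound on $d_{\phi}$ and the $o(s)$ conclusion in one stroke; the paper's two-step version is slightly shorter on the page because the first step is outsourced to the literature. Both are sound; just make sure, as you implicitly do, that $G(s)<\infty$ (continuity of $\phi$ on the compact image of $\gamma^j$) before solving the quadratic inequality.
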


\begin{proof}

Let us denote by $$a= ( v_{k+1}, \dots, v_n, \eta_1, \dots, \eta_k, w_{k+1}, \dots, w_n, \tau).$$
If $j=1, \dots, n-k$,  $W^{\phi}_j= \tilde{X}_{j+k}$, if $j= n+1, \dots, 2n-k$,  $W^{\phi}_j= \tilde{Y}_{j-(n-k)}$. In both cases the integral curve $\gamma^j$ of $W^{\phi}_j$ with $\gamma^j(0)=a$ is unique; it is immediate to verify that
$$ d_{\phi}(\gamma^j(s),a)= d_{\phi}( \gamma^j(s), \gamma^j(0)) = |s|. $$
Let us consider for instance $j \in \{1, \dots, n-k \}$, then
$$ \gamma^j(s)=\left(v_{k+1}, \dots,v_j+s, \dots, v_n, \eta_1, \dots, \eta_k, w_{k+1}, \dots,w_n, \tau-\frac{1}{2} w_j s\right).$$
\begin{equation}
\label{eq7}
\begin{split}
d_{\phi}(&\gamma^j(s),a)= d_{\phi}( \gamma^j(s), \gamma^j(0)) \\
& = \max \{ |s|, |\tau-\frac{1}{2} w_j s- \tau \\
& \hphantom{=} + \sigma((v_{k+1}, \dots, v_j+s, \dots, v_n) ,(w_{k+1} \dots, w_n),(v_{k+1}, \dots,  v_n),(w_{k+1} \dots, w_n))|^{\frac{1}{2}}\} \\
&= \max \{|s|, |-\frac{1}{2} w_js +\frac{1}{2}(v_j+s)w_j -\frac{1}{2}v_j w_j|^{\frac{1}{2}} \}\\
&=|s|.
\end{split}
\end{equation}
Given $j \in \{ n-k+1, \dots, n \}$, $\gamma^j$ is an integral curve of the vector field $\nabla^{\phi_{\ell}}= \partial_{\eta_{\ell}}+ \phi_{\ell} \partial_{\tau}$ for $\ell=j-(n-k)$. As already pointed out, the integral curve $\gamma^j$ can fail to be unique. Nevertheless, it has the following integral form
$$ \gamma^j(s)=\left(v_{k+1}, \dots, v_n, \eta_1, \dots, \eta_{\ell}+s, \dots, \eta_k, w_{k+1}, \dots, w_{n}, \tau+ \int_0^s \phi_{\ell}(\gamma^j(r))dr \right).$$
On the other hand, $\phi$ is intrinsic differentiable at $a$, hence (see \cite{Notesserra}, Remark 4.75)
$$ \lim_{m \to a} \frac{ | \phi(m)- \phi(a)-J^{\phi} \phi(a)(\pi(a^{-1} \cdot m)^T) |}{d_{\phi}(m,a)}=0.$$
Hence (see for instance \cite{Notesserra}, Proposition 4.76) there exist two positive constants $C,r$ such that
$$ | \phi(m)- \phi(a) | \  \leq \ C d_{\phi}(m,a) \ \ \ \ \ \ \forall m \in B_{\infty}(a,r) \cap \mathbb{M}.$$
We can assume, unless we restrict the domain of the curve $\gamma^j$, that $\gamma^j$ is defined on an interval $[-\delta_j, \delta_j]$ such that the previous inequality holds for $m= \gamma^j(s)$ for any $s \in [-\delta_j, \delta_j]$:
$$ | \phi(\gamma^j(s))- \phi(\gamma^j(0))| \  \leq \  C d_{\phi}(\gamma^j(s),a) \ \ \ \ \ \forall s \in [ -\delta_j, \delta_j].$$
Hence, for every $i=1, \dots, k$
$$ | \phi_i( \gamma^j(s))- \phi_i(\gamma^j(0))|  \leq  |\phi(\gamma^j(s))- \phi(\gamma^j(0)) |  \  \leq \ C d_{\phi}(\gamma^j(s),a) ,$$ $\forall s \in [ -\delta_j, \delta_j].$
Then we study $d_{\phi}( \gamma^j(s),a)$ for $s \in [-\delta_j, \delta_j]$
\begin{equation}
\begin{split}
  & \ d_{\phi}( \gamma^j(s),a)\\
  = & \ \max \{ |s|, |  \int_0^s \phi_{\ell}(\gamma^j(r))dr + ( \phi_{\ell}(a))(-s) |^{\frac{1}{2}} \} \leq \\
 = & \ \max \{|s|, |  \int_0^s \phi_{\ell}( \gamma^j(r))- \phi_{\ell}(a)dr|^{\frac{1}{2}} \}\\
 \leq & \ \max \{ |s|, C^{\frac{1}{2}}|s|^{\frac{1}{2}} ( \sup_{s \in [ - \delta_j, \delta_j]} d_{\phi}(\gamma^j(s), a))^{\frac{1}{2}} \} \\
\leq & \ \max \{ |s|, \frac{C}{2} |s| +  \frac{1}{2} \sup_{s \in [ - \delta_j, \delta_j]}  d_{\phi}(\gamma^j(s), a) \}\\
\end{split}
\end{equation}
Therefore
\begin{equation}
\label{eq8}
d_{\phi}(\gamma^j(s), a)  \ \leq  \ C_2 |s|,
\end{equation}
where $C_2= \max \{1,C \}$.

Hence
\begin{equation}
\label{eqdiprima}
\begin{split}
&\frac{ \left| \begin{pmatrix}
\phi_1( \gamma^j(s))- \phi_1(a)- [J^{\phi} \phi(a)]_{1j} s \\
\dots \dots \dots \\
\phi_k( \gamma^j(s))- \phi_k(a)- [J^{\phi} \phi(a)]_{kj} s\\
\end{pmatrix}	\right|}{|s|} \\
= &\  \frac{ \left| \phi (\gamma^j(s))- \phi(\gamma^j(0))- 
J^{\phi} \phi(a)
 \ s \textbf{e}_{n-k+j} \right|
}{|s|}\\
= &\  \frac{ | \phi (\gamma^j(s))- \phi(\gamma^j(0))- 
J^{\phi} \phi(a)
 (\pi( a^{-1} \cdot \gamma^j(s))^T) |
}{|s|}\\
\leq &\  C_2 \frac{ | \phi (\gamma^j(s))- \phi(\gamma^j(0))- J^{\phi} \phi(a) (\pi( a^{-1} \cdot \gamma^j(s))^T) |}{ d_{\phi}(\gamma^j(s),a)}.
\end{split}
\end{equation}
where $\textbf{e}_{n-k+j} \in M_{2n-k,1}(\mathbb{R})$ is the $(n-k+j)$-th element of the canonical basis of $\mathbb{R}^{2n-k}$.

Now, thanks to the intrinsic differentiability of $\phi$ at $a$, (\ref{eqdiprima}) goes to zero as $s$ tends to zero and we get the thesis.
\end{proof}

Combining Remark \ref{Remeucl} and Proposition \ref{P2}, it is not difficult to conclude the following.
\begin{cor}
\label{coreucl}
Given $\Omega$ an open set in $\mathbb{R}^{2n+1-k}$ and a continuously (Euclidean) differentiable function $\phi: \Omega \to \mathbb{R}^k$, for every $p \in \Omega$ 
\begin{equation}
J^{\phi} \phi(p)= \begin{pmatrix} \omega_{1,1}(p) & \dots & \omega_{1,2n-k}(p) \\
\dots & \dots & \dots\\
\omega_{k,1}(p)& \dots & \omega_{k,2n-k}(p)\\
\end{pmatrix}
\end{equation}
where $\omega_{i,j}$ are the functions defined by $\omega_{i,j}(p)= \frac{d}{dt}(\phi_i(\mathrm{exp}(tW^{\phi}_j )(p) )\big{|}_{t=0}$.
\end{cor}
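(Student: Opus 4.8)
The plan is to identify the limit appearing in Proposition \ref{P2} with the derivative $\omega_{i,j}$, once we know that the two hypotheses needed there --- intrinsic differentiability of $\phi$ and uniqueness of the relevant integral curve --- are both guaranteed by the Euclidean $C^1$ regularity of $\phi$.

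First I would observe that a continuously Euclidean differentiable $\phi$ is in particular intrinsic differentiable at every $p \in \Omega$, with a well defined intrinsic Jacobian matrix $J^{\phi}\phi(p)$: this is exactly the content underlying Proposition \ref{propc1}, obtained by choosing $f \in C^1_{\mathbb{H}}$ with $f \circ \Phi = 0$ near $p$ and invoking Theorem \ref{teo1}. In particular the hypotheses of Proposition \ref{P2} are satisfied at every point of $\Omega$.

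Next, fix $p \in \Omega$, $i \in \{1, \dots, k\}$ and $j \in \{1, \dots, 2n-k\}$. Because $\phi \in C^1$, the coefficients of $W^{\phi}_j$ are of class $C^1$: they are smooth for $j \leq n-k$ and $j \geq n+1$, while for the central indices $n-k+1 \leq j \leq n$ the field is $\partial_{\eta_{j-(n-k)}} + \phi_{j-(n-k)} \partial_{\tau}$, hence $C^1$ whenever $\phi$ is. By the Cauchy--Lipschitz theorem there is then a unique integral curve $\gamma^j$ of $W^{\phi}_j$ with $\gamma^j(0) = p$, which is precisely $\mathrm{exp}(t W^{\phi}_j)(p)$; this is the situation recorded in Remark \ref{Remeucl}, and it makes $\omega_{i,j}(p) = \frac{d}{dt} \phi_i(\mathrm{exp}(t W^{\phi}_j)(p)) \big|_{t=0}$ a well defined number, independent of any choice of curve.

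Finally, Proposition \ref{P2} gives $\lim_{s \to 0} \frac{\phi_i(\gamma^j(s)) - \phi_i(\gamma^j(0))}{s} = [J^{\phi}\phi(p)]_{ij}$, and by construction the left-hand side is exactly $\omega_{i,j}(p)$. Assembling these equalities over all $i$ and $j$ yields the claimed matrix identity. The only point requiring any care --- and it is genuinely minor --- is the verification that $C^1$ coefficients suffice for uniqueness of the integral curves of the central vector fields $W^{\phi}_j$, so that $\omega_{i,j}$ is well defined; everything else is a direct matching of definitions.
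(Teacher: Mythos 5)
Your proposal is correct and follows essentially the same route as the paper, which simply combines Remark \ref{Remeucl} (uniqueness of integral curves and identification of $\omega_{i,j}$ with the derivative along them, thanks to the Euclidean $C^1$ regularity) with Proposition \ref{P2}; your additional remark that intrinsic differentiability of a $C^1$ function is guaranteed by the argument behind Proposition \ref{propc1} and Theorem \ref{teo1} is exactly the implicit step the paper relies on.
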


\begin{cor}
\label{corollarioequiv}
Let $\Omega$ be an open set and $\phi: \Omega \subset \mathbb{R}^{2n+1-k} \to \mathbb{R}^k$ be a continuous function. Let be $a \in \Omega$ and assume that $\phi$ is intrinsic differentiable at $a$. Hence for every $j=1, \dots, 2n-k$, there exists $\partial^{\phi_j} \phi(a)$ and 
$$ \partial^{\phi_j} \phi (a)= \begin{pmatrix}
\partial^{\phi_j} \phi_1 (a) \\
\dots \\
\partial^{\phi_j} \phi_k(a) \\
\end{pmatrix}=
\begin{pmatrix}
[J^{\phi}\phi]_{1,j}\\
\dots\\
[J^{\phi}\phi]_{k,j}.\\
\end{pmatrix}$$
\end{cor}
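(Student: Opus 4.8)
The plan is to deduce this directly from Proposition \ref{P2}, since the corollary is essentially a repackaging of that statement together with the definition of the $\partial^{\phi_j}$-derivative. First I would fix $j \in \{1, \dots, 2n-k\}$ and the point $a \in \Omega$ at which $\phi$ is assumed intrinsic differentiable, and let $\gamma^j \colon (-\delta, \delta) \to \Omega$ be an \emph{arbitrary} integral curve of $W^{\phi}_j$ with $\gamma^j(0) = a$. Such a curve exists by Peano-Picard's theorem, because the coefficients of $W^{\phi}_j$ are continuous; note that for $j = n-k+1, \dots, n$ it need not be unique, so the argument must hold for every admissible choice.

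Next I would apply Proposition \ref{P2} componentwise. Since $\phi$ is intrinsic differentiable at $a$ with intrinsic Jacobian matrix $J^{\phi}\phi(a)$, the proposition yields, for each $i \in \{1, \dots, k\}$, that the scalar limit $\lim_{s \to 0}(\phi_i(\gamma^j(s)) - \phi_i(\gamma^j(0)))/s$ exists and equals $[J^{\phi}\phi(a)]_{ij}$. Because a limit in $\mathbb{R}^k$ is taken coordinate by coordinate, assembling these $k$ scalar limits shows that the vector-valued limit $\lim_{s \to 0}(\phi(\gamma^j(s)) - \phi(a))/s$ exists and equals the $j$-th column of $J^{\phi}\phi(a)$, namely $([J^{\phi}\phi(a)]_{1j}, \dots, [J^{\phi}\phi(a)]_{kj})^T$.

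The one conceptual point to emphasize is that the value supplied by Proposition \ref{P2} is the fixed number $[J^{\phi}\phi(a)]_{ij}$, which does \emph{not} depend on the particular integral curve $\gamma^j$ chosen. This curve-independence is exactly the requirement appearing in the definition of the $\partial^{\phi_j}$-derivative: the limit must exist and agree for every integral curve of $W^{\phi}_j$ through $a$. Consequently $\phi$ has $\partial^{\phi_j}$-derivative at $a$, with $\alpha_{i,j} = [J^{\phi}\phi(a)]_{ij}$, and writing the resulting vector in column form gives the asserted identity $\partial^{\phi_j}\phi(a) = ([J^{\phi}\phi]_{1,j}, \dots, [J^{\phi}\phi]_{k,j})^T$.

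I do not expect any genuine obstacle here, since the entire analytic content has already been established in Proposition \ref{P2}; what remains is only to check that its conclusion matches the definition of the $\partial^{\phi_j}$-derivative. The sole subtlety worth spelling out explicitly is the independence of the limit from the (possibly non-unique) integral curve, which is what guarantees that the derivative is well defined in the sense of the definition rather than merely existing along one fixed curve.
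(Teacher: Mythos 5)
Your proof is correct and follows exactly the route the paper intends: the corollary is stated as an immediate consequence of Proposition \ref{P2}, whose conclusion (existence of the limit along \emph{every} integral curve of $W^{\phi}_j$ through $a$, with the curve-independent value $[J^{\phi}\phi(a)]_{ij}$) matches the definition of the $\partial^{\phi_j}$-derivative componentwise. Your explicit emphasis on the curve-independence is precisely the point that makes the derivative well defined, so nothing is missing.
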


From the following theorem follows that the existence of an uniform approximation of the function $\phi$ through a sequence of continuously Euclidean differentiable functions as in (ii) of Proposition \ref{P6}, implies a further regularity in every direction, and, in particular, gives a control in the vertical direction (this follows arguing as in the second part of the proof of Theorem 5.1 in \cite{Articolo}).

\begin{prop}
\label{P3}
Let $I \subset \mathbb{R}^{2n+1-k}$ be a rectangle, and $\phi \in C^1(I, \mathbb{R}^k)$. By Proposition \ref{propc1} and Corollary \ref{coreucl}, we can write $J^{\phi} \phi \in C^0(I, M_{k,2n-k}(\mathbb{R}))$ as
$$J^{\phi} \phi(p) = \begin{pmatrix}
\omega_{1,1} & \dots & \omega_{1,2n-k}\\
\dots & \dots & \dots \\
\omega_{k,1} & \dots & \omega_{k,2n-k}\\
\end{pmatrix}(p),$$
where, for any $i= 1, \dots k $, $$\omega_{i,\ell}(p)=\begin{cases} 
\tilde{X}_{l+k} \phi_i \ \ \ \ell=1, \dots, n-k \\
\nabla^{\phi_j} \phi_i(p)= \partial_{\eta_j} \phi_i (p)+ \phi_j(p)\partial_{\tau} \phi_i(p) \ \ \ \ell=n-k+1, \dots, n, \ \ \ j= l-(n-k) \\
\tilde{Y}_{l-(n-k)} \phi_i \ \ \ \ell=n+1, \dots, 2n-k.\\
\end{cases}$$
Given rectangles $I'$ and $I''$ such that $I' \Subset I'' \Subset I$, then there exists a function
$$ \alpha: (0, \infty)\to [0, \infty)$$
depending on $k$, on $ \{ \parallel \phi_j \parallel_{L_{\infty}(I'')} \}_{j= 1, \dots, k}$, on $\parallel J^{\phi} \phi \parallel_{L_{\infty}(I'')}$ and on the modulus of continuity of $ \{\omega_{j,j+(n-k)} \}_{j=1, \dots,k}$ on $I''$, such that, for $r$ sufficiently small: \begin{itemize}
\item $$\sup \left\{ \frac{| \phi(a)-\phi(b)|}{|a-b|^{1/2}} : a,b \in I', \ 0< |a-b| \leq r \right\} \leq \alpha(r);$$
\item $$\lim_{r\to 0} \alpha(r)=0.$$
\end{itemize}
\end{prop}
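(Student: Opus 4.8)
The plan is to estimate $|\phi(a)-\phi(b)|$ by splitting the displacement from $a$ to $b$ into a \emph{horizontal} part, along the coordinates $v_{k+1},\dots,v_n,\eta_1,\dots,\eta_k,w_{k+1},\dots,w_n$, and a \emph{vertical} part, along $\tau$. The horizontal part will give a Lipschitz bound governed only by $\|J^{\phi}\phi\|_{L^\infty(I'')}$, while the vertical part will produce the genuine $\tfrac12$-H\"older behaviour and will be controlled solely through the modulus of continuity of the diagonal entries $\omega_{j,j+(n-k)}=\nabla^{\phi_j}\phi_j$. First I would connect $a$ to a point $c$ having the same horizontal coordinates as $b$ by concatenating at most $2n-k$ integral curves of the fields $W^{\phi}_\ell$ (the smooth $\tilde X_{k+\ell},\tilde Y_\ell$ for the $v,w$ displacements and the curves of $\nabla^{\phi_j}$ for the $\eta_j$ displacements). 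Along each such curve $\gamma$ of length $\le|a-b|$, Corollary \ref{coreucl} gives $\frac{d}{ds}\phi_i(\gamma(s))=\omega_{i,\ell}(\gamma(s))$, whence $|\phi(c)-\phi(a)|\le C\,\|J^{\phi}\phi\|_{L^\infty(I'')}\,|a-b|$; moreover each flow moves $\tau$ by an amount bounded by the coefficients $\tfrac12 w_j,\tfrac12 v_j,\phi_j$ times the arclength, so $c$ and $b$ differ only in $\tau$, by some $\Delta\tau$ with $|\Delta\tau|\le C'|a-b|$, where $C'$ depends on $\{\|\phi_j\|_{L^\infty(I'')}\}$ and on the size of $I''$.

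The heart of the argument is the vertical estimate: for a base point $P\in I'$ and $t$ small, I would bound $|\phi_j(P+t\,\mathbf{e}_\tau)-\phi_j(P)|$ for each $j$ using only the modulus of continuity $\theta_j$ of $\nabla^{\phi_j}\phi_j$ on $I''$. The device is to run the two integral curves $\gamma,\tilde\gamma$ of the $C^1$ field $\nabla^{\phi_j}=\partial_{\eta_j}+\phi_j\partial_\tau$ issuing from $P$ and from $P+t\,\mathbf{e}_\tau$ respectively. These two curves carry the same value $\eta_{j,0}+s$ of $\eta_j$ and identical remaining horizontal coordinates, so they differ only in $\tau$; setting $h(s):=\tilde\gamma_\tau(s)-\gamma_\tau(s)$ and $g(s):=\phi_j(\tilde\gamma(s))-\phi_j(\gamma(s))$ yields the coupled system $h'=g$, $h(0)=t$, together with $g'(s)=\nabla^{\phi_j}\phi_j(\tilde\gamma(s))-\nabla^{\phi_j}\phi_j(\gamma(s))$, hence $|g'(s)|\le\theta_j(|h(s)|)$. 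Because $\nabla^{\phi_j}$ is $C^1$, its integral curves cannot cross, so $h$ keeps the sign of $t$ and never vanishes. Assuming (by symmetry, up to sign changes) $t>0$ and $g(0)=M>0$, an elementary comparison shows that if $g\ge M/2$ held on all of $[-2t/M,0]$ then $h$ would be driven down to $0$, a contradiction; hence $g$ must increase by at least $M/2$ over a backward interval of length $\le 2t/M$ on which $0<h\le t$ and thus $|g'|\le\theta_j(t)$. This gives $M^2\le 4t\,\theta_j(t)$, i.e.
\[
|\phi_j(P+t\,\mathbf{e}_\tau)-\phi_j(P)|\le 2\sqrt{t\,\theta_j(t)},
\]
the little-$\tfrac12$-H\"older bound in the vertical direction depending only on $\theta_j$ (with the harmless alternative $M\lesssim t/\delta_0$ in the regime where $2t/M$ exceeds the fixed radius $\delta_0$ on which the curves remain in $I''$).

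Finally I would combine the two steps. Writing $\rho:=|a-b|\le r$, the bounds above give $|\phi(a)-\phi(b)|\le|\phi(a)-\phi(c)|+|\phi(c)-\phi(b)|\le C\|J^{\phi}\phi\|_{L^\infty(I'')}\rho+2\sum_{j=1}^k\sqrt{C'\rho\,\theta_j(C'\rho)}$; dividing by $\rho^{1/2}$ and using $\rho\le r$ produces
\[
\frac{|\phi(a)-\phi(b)|}{|a-b|^{1/2}}\le C\,\|J^{\phi}\phi\|_{L^\infty(I'')}\,r^{1/2}+2\sum_{j=1}^k\sqrt{C'\,\theta_j(C'r)}=:\alpha(r),
\]
which depends exactly on $k$, on $\{\|\phi_j\|_{L^\infty(I'')}\}$ (through $C'$), on $\|J^{\phi}\phi\|_{L^\infty(I'')}$ and on the moduli $\theta_j$ of $\nabla^{\phi_j}\phi_j$, and satisfies $\alpha(r)\to0$ as $r\to0$ since $r^{1/2}\to0$ and each $\theta_j(C'r)\to0$.

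I expect the main obstacle to be precisely the vertical estimate: one must extract the square-root rate from the mere continuity of the self-interaction term $\nabla^{\phi_j}\phi_j$, with no access to $\partial_\tau\phi$, and it is the non-crossing of the integral curves of $\nabla^{\phi_j}$ together with the coupled ODE for $(h,g)$ that makes this possible while keeping $\alpha$ independent of the Euclidean derivatives of $\phi$. The bookkeeping of the $\tau$-drift in the horizontal step and the sign cases in the vertical step are routine by comparison.
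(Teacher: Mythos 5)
Your proposal is correct and follows essentially the same strategy as the paper: split the displacement into a horizontal part handled by flowing along the $W^{\phi}_\ell$ and bounding via $\|J^{\phi}\phi\|_{L^\infty(I'')}$, and a purely vertical part where the $\tfrac12$-little-H\"older rate is extracted from the non-crossing of the integral curves of $\nabla^{\phi_j}$ together with the modulus of continuity of $\nabla^{\phi_j}\phi_j$ (your coupled system for $h=\tilde\gamma_\tau-\gamma_\tau$ and $g=h'$ is exactly the paper's second-order expansion of $\tau_a^j-\tau_c^j$). The only difference is presentational: you derive the bound $M\le 2\sqrt{t\,\theta_j(t)}$ directly, whereas the paper phrases the same mechanism as a contradiction against an explicitly defined threshold $\delta_j(r)$.
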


\begin{proof}
The proof is a generalization of Proposition 5.8 in \cite{Articolo}.

We consider first the $\ell-$th column of the matrix, for $\ell= n-k+1, \dots, n$. We set $j= \ell-(n-k)$, so that $j \in \{ 1, \dots, k \}$. We call $K= \sup_{a \in I''} |a|$, $M_j:= \parallel \phi_j \parallel_{L_{\infty}(I'')}$ and $N:= \parallel J^{\phi} \phi \parallel_{L_{\infty}(I'')}$. $\beta_j$ is the modulus of continuity of $ \omega_{j,j+(n-k)}$, on $I''$ i.e. it is a continuous increasing function $\beta_j: (0, \infty)\to [0, \infty)$ such that $| \omega_{j,j+(n-k)}(a)- \omega_{j,j+(n-k)}(b)| \leq \beta_j(|a-b|)$ for all $a,b \in I''$, with $\lim_{r\to 0} \beta_j(r)=0$.

We introduce some rectangles such that $I' \Subset J_1 \Subset J_2 \dots \Subset J_{k+1} \Subset  I''$. We denote $I'=J_0$, and, for any $a= (v_{k+1}, \dots, v_n, \eta_1, \dots, \eta_k, w_{k+1} , \dots, w_n, \tau) \in J_i$ (for $i=0, \dots, k-1$), for $j \in \{ 1, \dots, k \}$, we consider the integral curves:

\begin{equation}
   \begin{cases}
   \dot{\gamma}_a^j(t)= (\frac{\partial}{\partial \eta_j} + \phi_j(\gamma_a^j(t))\frac{\partial}{ \partial \tau})( \gamma_a^j(t))= \nabla^{\phi_j}(\gamma_a^j(t))\\ \gamma_a^j(\eta_j)=a.
   \end{cases}
\end{equation}
Thanks to the Cauchy-Lipschitz theorem, these are well defined and $\gamma_a^j \in C^1([\eta_j-\varepsilon_{i+1,j}, \eta_j+ \varepsilon_{i+1,j}])$ for a certain constant $\varepsilon_{i,j}$ that depends on $J_i$ and $J_{i+1}$. We can choose $\varepsilon_{i+1,j}$ such that $\gamma_a^j(t)([\eta_j-\varepsilon_{i+1,j}, \eta_j+ \varepsilon_{i+1,j}]) \subset J_{i+1}$ for every $a \in J_{i}$ and all $j=1, \dots, k$ (the choice is uniform in $a$).
If $a= (v_{k+1}, \dots, v_n, \eta_1, \dots, \eta_k, w_{k+1} , \dots, w_n, \tau)$ we get, for $t \in [\eta_j-\varepsilon_{i,j}, \eta_j+\varepsilon_{i,j}]$
\begin{equation}
\label{curva}
\gamma_a^j(t)=\left( v_{k+1}, \dots, v_n, \eta_1, \dots, \eta_j + (t-\eta_j), \dots, \eta_k, w_{k+1} , \dots, w_n, \tau + \int_{\eta_j}^{t} \phi_j( \gamma_a^j(s))ds\right).
\end{equation}
Denoting $\tau_a^j(t)=\tau + \int_{\eta_j}^{t} \phi_j( \gamma_a^j(s))ds$ we also have that
$$\dot{\tau}_a^j(t)=\phi_a^j(\gamma_a^j(t)), \ \ \ \ \ \ \ \ \ \ \ \frac{d^2}{d^2t}\tau_a^j(t)=\frac{d}{dt}\phi_a^j(\gamma_a^j(t))= \omega_{j,j+(n-k)}(\gamma_a^j(t)).$$
Let us now set:
$$\delta_j(r):= \max \{r^{1/4}, 2 \sqrt{2k\beta_j(r+4kM_jr^{1/4})} \}.$$
We will prove that 
\begin{equation}
\label{eqcontr}
\begin{split}
\theta(r):&=\sup \left\{ \frac{| \phi(a)-\phi(b)|}{|a-b|^{1/2}} : a,b \in I', \ 0< |a-b| \leq r \right\} \\
& \leq \left(\sum_{j=1}^k \delta_j(r)\right)+ \sqrt{\sum_{j=1}^k M_j}\left(\sum_{j=1}^k \delta_j(r)\right)+kN r^{1/2}
\end{split}
\end{equation}
for $r$ sufficiently small. The thesis will follow directly from this inequality.\\

In order to prove (\ref{eqcontr}), we proceed by contradiction.

Let us first assume $a$ and $b$ as below.
Later on, the result will be extended to $a$ and $b$ in $I'$ of generic coordinates. Set
\begin{equation}
\begin{split}
a&=( v_{k+1}, \dots, v_n, \eta_1,  \dots, \eta_k, w_{k+1} , \dots, w_n, \tau) \in I'\\
b&=( v_{k+1}, \dots, v_n, \eta_1', \dots, \eta_k', w_{k+1} , \dots, w_n, \tau') \in I'
\end{split}
\end{equation} 
such that $|a-b|$ is sufficiently small and
\begin{equation}
\label{eq26}
\frac{| \phi(a)- \phi(b)|}{|a-b|^{1/2}} > \sum_{j=1}^k \delta_j+ \sqrt{\sum_{j=1}^k M_j} \left(\sum_{j=1}^k \delta_j \right)+k^2N r^{1/2}
\end{equation}
where $\delta_j=\delta_j(|a-b|)$. Notice that the functions $\delta_j$ are monotonically increasing.\\

For $j= 1, \dots, k$, we call $\delta_j'=\delta_j(| \tau'-\tau |) \leq \delta_j$.
We have that, thanks to the definition of $\delta_j$, 

\begin{equation}
\begin{split}
& \ \frac{\beta_j(|\tau'-\tau|+4kM_j|\tau'-\tau|^{1/2}/\delta_j)}{\delta_j^2}\\
 \leq &  \ \frac{\beta_j(|\tau'-\tau|+4kM_j|\tau'-\tau|^{1/2}/\delta_j')}{\delta_j'^2}\\
  \leq & \  \frac{\beta_j(|\tau'-\tau|+4kM_j|\tau'-\tau|^{1/4})}{\delta_j'^2}\\
   \leq &  \ \frac{\delta_j'^2}{8k} \frac{1}{\delta_j'^2}\\
  = & \ \frac{1}{8k}.
\end{split}
\end{equation}
We now consider $$c=(v_{k+1}, \dots, v_n, \eta_1,  \dots, \eta_k, w_{k+1} , \dots, w_n, \tau') \in I'.$$
Notice that $a$ and $c$ differ only for the vertical coordinate and $c$ and $b$ for the horizontal ones. In particular
\begin{equation}
\begin{split}
|a-c|^{1/2} &= |\tau-\tau'|^{1/2}\\
|c-b|^{1/2} &= | (\eta_1-\eta_1', \dots, \eta_k-\eta_k')|^{1/2}.
\end{split}
\end{equation}
Since we are proceeding by contradiction let us continue from (\ref{eq26})
\begin{equation}
\begin{split}
&  \ \sum_{j=1}^k \delta_j+ \sqrt{\sum_{j=1}^k M_j} \left(\sum_{j=1}^k \delta_j \right)+kN r^{1/2}\\
< &  \ \frac{| \phi(a)- \phi(b)|}{|a-b|^{1/2}}\\
 \leq & \  \frac{| \phi(a)- \phi(c)|}{|a-b|^{1/2}}+\frac{| \phi(c)- \phi(b)|}{|a-b|^{1/2}}\\
\leq & \ \frac{| \phi(a)- \phi(c)|}{|\tau-\tau'|^{1/2}}+\frac{| \phi(c)- \phi(b)|}{| (\eta_1-\eta_1', \dots,  \eta_k-\eta_k')|^{1/2}}\\
 \leq & \ \frac{\sum_{j=1}^k| \phi_j(a)- \phi_j(c)|}{|\tau-\tau'|^{1/2}}+\frac{ \sum_{j=1}^k| \phi_j(c)- \phi_j(b)|}{| (\eta_1-\eta_1', \dots,  \eta_k-\eta_k')|^{1/2}}\\
 :=& \  R_1+R_2.
\end{split}
\end{equation}
We reach a contradiction by showing
\begin{itemize}
\item[(i)] $R_1 \leq \sum_{j=1}^k \delta_j;$
\item[(ii)] $R_2 \leq \sqrt{\sum_{j=1}^k M_j}(\sum_{j=1}^k \delta_j)+kNr^{1/2}$.
\end{itemize}

Let us prove (i). We show (i) for any $a,c \in J_{k}$ (hence in particular, for $a, c \in I'$), when $a$ and $c$ differ only for the vertical coordinate,
$R_1 \leq \sum_{j=1}^k\delta_j$.
We prove in particular that for any $j$ the following holds,
$$ \frac{| \phi_j(a)- \phi_j(c)|}{|\tau-\tau'|^{1/2}} \leq \delta_j.$$

Let us consider $a,c \in J_k$ as before and let us assume $\tau> \tau'$. We assume by contradiction that 
\begin{equation}
\label{eq27}
 \frac{ | \phi_j(a)- \phi_j(c)|}{ | \tau- \tau'|^{\frac{1}{2}}} > \delta_j.
\end{equation}
Consider $\gamma_a^j$ and $\gamma_c^j$.
For any $t \in [\eta_j-\varepsilon_{k+1,j}, \eta_j+\varepsilon_{k+1,j}]$ we can use the fundamental theorem of calculus
as follows
\begin{equation}
\label{eq9}
\begin{split}
 & \  \tau_a^j(t)- \tau_c^j(t) \\
= & \ \tau-\tau'+ \int_{\eta_j}^t [ \dot{\tau}_a^j(\eta_j)-\dot{\tau}_c^j(\eta_j)+ \int_{\eta_j}^s[\ddot{\tau}_a^j(r)-\ddot{\tau}_c^j(r)]dr]ds \\
= & \  \tau-\tau'+ (t-\eta_j)(\phi_j(a)-\phi_j(c))+ \int_{\eta_j}^t \int_{\eta_j}^s [\omega_{j,j+(n-k)} (\gamma_a^j(r))-\omega_{j,j+(n-k)}(\gamma_c^j(r))drds \\
 \leq & \  \tau-\tau'+ (t-\eta_j)(\phi_j(a)-\phi_j(c))+ (t-\eta_j)^2 \sup_{r \in [\eta_1,t]} \beta_j(|\gamma_a^j(r)-\gamma_c^j(r)|)\\
 \leq & \  \tau-\tau'+ (t-\eta_j)(\phi_j(a)-\phi_j(c))+ (t-\eta_j)^2 \beta_j(|\tau-\tau'| + 2M_j |t-\eta_j|)\\
 \end{split}
\end{equation}
since by fundamental theorem of calculus and the triangle inequality the following holds
\begin{equation}
\begin{split}
& \ |\gamma_a^j(r)-\gamma_c^j(r)| \\
\leq & \ |\gamma_a^j(\eta_j)-\gamma_c^j(\eta_j)|+|r-\eta_j|( \parallel \dot{\tau}_a^j \parallel_{\infty} + \parallel \dot{\tau}_c^j \parallel_{\infty}) \\
 \leq & \  |\tau-\tau'|+2M_j |t-\eta_j|.
\end{split}
\end{equation}

Now, if $(\phi_j(a)-\phi_j(c))>0$, we set 
\begin{equation}
\label{eq17}
t:= \eta_j-2k \frac{ (\tau-\tau')^{1/2}}{\delta_j}
\end{equation}
or otherwise

\begin{equation}
\label{eq11}
t:= \eta_j+2k \frac{ (\tau-\tau')^{1/2}}{\delta_j}.
\end{equation}

We can take $a$ and $b$ close enough so that $2k (\tau-\tau')^{1/4} \leq \varepsilon_{k+1,j}$, since, according to the definition of $\delta_j$, we have
\begin{equation}
\label{condizione}
\begin{split}
\ 2k (\tau-\tau')^{1/4}
\geq & \  2k \frac{ (\tau-\tau')^{1/2}}{\delta_j} \\
= & \ |t-\eta_j|
\end{split}
\end{equation}
Hence, for $r$ small enough, we follow the reasoning in (\ref{eq17}) so that the last terms in (\ref{eq9}) equals
\begin{equation}
\label{eq_1}
\begin{split}
 \tau-\tau'&+ \left(-2k \frac{ (\tau-\tau')^{1/2}}{\delta_j}\right)(\phi_j(a)-\phi_j(c))\\
&+ \left(-2k \frac{ (\tau-\tau')^{1/2}}{\delta_j}\right)^2 \beta_j \left(|\tau-\tau'| + 2M_j \left|-2k \frac{ (\tau-\tau')^{1/2}}{\delta_j}\right| \right).
\end{split}
\end{equation}

By contradiction we had assumed (\ref{eq27}) to be true, which implies $\frac{|\phi_j(a)-\phi_j(c)|}{|\tau-\tau'|^{1/2}} > \delta_j$. Then (\ref{eq_1}) can be estimated from above by

\begin{equation}
\label{assurdo}
\begin{split}
  \ \tau-\tau'&+ (-2k  ((\tau-\tau')^{1/2})\frac{|\tau-\tau'|^{1/2}}{(\phi_j(a)-\phi_j(c))}(\phi_j(a)-\phi_j(c))\\
  &+ 4k^2 \frac{(\tau-\tau')}{\delta_j^2} \beta_j\left(|\tau-\tau'| + 2M_j \left|-2k \frac{ (\tau-\tau')^{1/2}}{\delta_j}\right| \right) \\
=   \  \tau-\tau'&+ (-2k  ((\tau-\tau')^{1/2})(\tau-\tau')^{1/2}\\
&+ 4k^2 (\tau-\tau') \frac{\beta_j \left(|\tau-\tau'| + 4kM_j \frac{ (\tau-\tau')^{1/2}}{\delta_j} \right)}{\delta_j^2} \\
& \leq  \ \tau-\tau'-2k(\tau-\tau')+\left(4k^2 \frac{(\tau-\tau')}{8k} \right) \\
& =  \  \tau-\tau'-2k(\tau-\tau')+\frac{1}{2}k(\tau-\tau') \\
& =  \ \frac{2-3k}{2}(\tau-\tau')<0 \\
\end{split}
\end{equation}
since $k\geq 1$.
This is not possible since it would imply that the two integral curves of $\nabla^{\phi_j}$ starting at $a$ and $c$ meet at some point on the plane $(\eta_j, \tau)$. (The study of the case $(\ref{eq11})$ for $(\phi_j(a)-\phi_j(c))<0$ gives an identical result).

Hence, for our $a$ and $c$, $R_1 \leq\sum_{j=1}^k \delta_j$. 

Let us now prove (ii).
By contradiction we assume
\begin{equation}
\label{eq35}
R_2 > \sqrt{\sum_{j=1}^k M_j}\left(\sum_{j=1}^k \delta_j \right)+kNr^{1/2}.
\end{equation}

First of all we define for $j=2, \dots, k $
\begin{equation}
\begin{split}
d_1: & = \gamma_b^1(\eta_1)\\
d_j: & = \gamma_{d_{j-1}}^j(\eta_j).\\
\end{split}
\end{equation}
(remember that $\gamma_b^j(\eta_j')=b$).

The points $b, d_1, \dots, d_{k}$ are vertices of a piecewise regular "polygonal" curve connecting $ b$ and $d_{k}$. The segments of this curve are built following the integral curves of the vector fields $\nabla^{\phi_j}$ for time $\eta_j'-\eta_j$, for $j=1, \dots, k$.

It turns out that $d_k=(v_{k+1}, \dots, v_n, \eta_1, \dots,  \eta_k, w_{k+1}, \dots, w_n, \tau'')$
for a certain well defined $\tau''$.

If, for every $j$, $ |\eta_j'-\eta_j|$ is sufficiently small, we have that $d_k$ is well defined and belongs to $J_k$. We can compute for every $i=1, \dots, k$,
\begin{equation}
\begin{split}
 \ |\phi_i(b)-\phi_i(d_k)| 
\leq & \  |\phi_i(b)-\phi_i(d_1)|+|\phi_i(d_1)-\phi_i(d_2)|+ \dots + | \phi_i(d_{k-1})- \phi_i(d_k)|=\\
= & \ \left| \int_{\eta_1'}^{\eta_1} \omega_{i,1}(\gamma_b^1(t))dt \right| + \dots + \left|\int_{\eta_k'}^{\eta_k} \omega_{i,k}(\gamma_{d_{k-1}}^k(t))dt \right|\\
\leq & \  N| \eta_1-\eta_1'|+ \dots +N |\eta_k-\eta_k'|\\
\leq & \ kN | (\eta_1- \eta_1',  \dots, \eta_k-\eta_k') |.\\
\end{split}
\end{equation}

Let us set now $b=d_0$ and compute now
\begin{equation}
\begin{split}
 |\tau'-\tau''|
= & \ \left| \tau'-\tau - \sum_{j=1}^k \int _{\eta_j'}^{\eta_j} \phi_j(\gamma_{d_{j-1}}^j(t))dt \right|\\
= & \ \left|  \sum_{j=1}^k \int _{\eta_j'}^{\eta_j} \phi_j(\gamma_{d_{j-1}}^j(t)) \right| \\
\leq & \ \sum_{j=1}^k M_j | \eta_j-\eta_j'| \\
\leq & \ \left(\sum_{j=1}^k M_j \right) | (\eta_1- \eta_1', \dots,  \eta_k-\eta_k')
 |.\\
\end{split}
\end{equation}
Now, by (\ref{eq35}) we get
\begin{equation}
\label{eq10}
\begin{split}
 \sum_{i=1}^k| \phi_i(c)- & \phi_i(d_k)|  
\geq  \  \sum_{i=1}^k |\phi_i(c)-\phi_i(b)|-|\phi_i(b)-\phi_i(d_k)| \\
> & \ \left(\sqrt{\sum_{j=1}^k M_j} \left(\sum_{j=1}^k\delta_j \right)+kNr^{1/2} \right) | (\eta_1'-\eta_1, \dots,  \eta_k'-\eta_k) |^{1/2} \\
\hphantom{>} & \ - kN | (\eta_1'-\eta_1, \dots, \eta_k'-\eta_k) | \\
\geq & \ \left(\sqrt{\sum_{j=1}^k M_j} \left(\sum_{j=1}^k \delta_j \right)+kNr^{1/2} - kN | (\eta_1'-\eta_1, \dots,  \eta_k'-\eta_k) |^{1/2} \right)  \\
\hphantom{>} & \ | (\eta_1'-\eta_1,  \dots, \eta_k'-\eta_k) |^{1/2}.
\end{split}
\end{equation}

If $ | (\eta_1'-\eta_1, \dots,  \eta_k'-\eta_k) |^{1/2} \leq |a-b|^{\frac{1}{2}} \leq r^{1/2}$
, we have that the last term in (\ref{eq10}) can be estimated from below by
\begin{equation}
 \sqrt{\sum_{j=1}^k M_j} \left(\sum_{j=1}^k \delta_j\right)| (\eta_1'-\eta_1, \dots, \eta_k'-\eta_k) |^{1/2} 
 \geq  \ |\tau'-\tau''|^{1/2}\left(\sum_{j=1}^k\delta_j\right).
\end{equation}
Therefore, we have proved that for $c,d_k \in J_k$, 
$$\frac{\sum_{i=1}^k | \phi_i(c)-\phi_i(d_k)|}{|\tau'-\tau''|^{1/2}} > \sum_{j=1}^k  \delta_j$$
which it is not possible (for what we proved before) for any $a,c \in J_k$.

Let us now consider the more general case where $a= ( v_{k+1}, \dots, v_n, \eta_1, \dots, \eta_k, w_{k+1}, \dots, w_n , \tau)$, $b= ( v_{k+1}', \dots, v_n', \eta_1', \dots, \eta_k', w_{k+1}', \dots, w_n' , \tau') \in I'.$
We want to exploit what we have proved before. In order to do this we move along the integral curves of the vector fields $ \tilde{X}_j, \ \tilde{Y}_j$ for $j= k+1, \dots, n$ in order to make the variables $v_j$s and $w_j$s coincide.
We then define $$a^* := \mathrm{exp} ( \sum_{j=k+1}^n ((v_j'-v_j) W^{\phi}_{j-k}+ (w_j'-w_j) W^{\phi}_{j+(n-k)}))(a).$$
Hence $$a^*= ( v_{k+1}', \dots, v_n', \eta_1, \dots, \eta_k, w_{k+1}', \dots, w_n', \tau + \sigma (v,w,v'-v,w'-w)).$$

\begin{equation}
\begin{split}
\ | \phi_i(a)- \phi_i(a^*)|  = & \ | \int_{0}^1 \sum_{j=k+1}^n ((v_j'-v_j) \omega_{i,j-k}( \mathrm{exp} (t W^{\phi}_{j-k})(a))\\
+& (w_j'-w_j) \omega_{i,j+(n-k)} ( \mathrm{exp}(t W^{\phi}_{j+(n-k)})(a)) dt | \\
\leq & \ N (n-k)( | v'-v|+ |w'-w|) \\
\leq & \ 2N(n-k)| a-b|.\\
\end{split}
\end{equation}
Hence 
\begin{equation}
\begin{split}
 \ | \phi(a)- \phi(a^*)| 
\leq & \ \sum_{i=1}^k | \phi_i(a)- \phi_i(a^*)| \\
\leq & \ 2 k (n-k) N |a-b|,
\end{split}
\end{equation}
where $|v'-v|, \ |w'-w|$ are the $n-k$ vectors containing the $v$s and $w$s components respectively .
If we consider $ | \sigma (v,w,v'-v,w'-w)| = | \frac{1}{2} \sum_{j=k+1}^n((v_j'-v_j)w_j-v_j( w_j'-w_j))) |\leq  K (n-k)|a-b|.$ Since it is controlled by the norm $|a-b|$, we can assume $r$ sufficiently small, and hence $a,b$ sufficiently close, such that $a^{*} \in I'$.
We then get
\begin{equation}
\begin{split}
 | a^*-b|  
\leq & \ |( \eta_1'-\eta_1, \dots, \eta_k'-\eta_k)| + | \tau'- \tau|+ |\sigma (v,w,v'-v,w'-w)| \\
 \leq & \ 2 |a-b| + K (n-k) |a-b|\\
 = & \ (2+K(n-k)) |a-b|.
\end{split}
\end{equation}

Now 
\begin{equation}
\label{eq_2}
\begin{split}
 \frac{|\phi(a)- \phi(b)|}{|a-b|^{1/2}}
\leq & \  \frac{|\phi(a)- \phi(a^*)|}{|a-b|^{1/2}} +\frac{|\phi(a^*)- \phi(b)|}{|a-b|^{1/2}} \\
\leq & \  \frac{2(n-k)k N |a-b|}{|a-b|^{1/2}} + \left(\frac{1}{2+K(n-k)} \right)\frac{|\phi(a^*)- \phi(b)|}{|a^*-b|^{1/2}} \\
\end{split}
\end{equation}
so we are in the particular case we had at the beginning. The last term of (\ref{eq_2}) can then be estimated from above by
\begin{equation}
\begin{split}
 & \  2(n-k)k N |a-b|^{1/2} + \left(\frac{1}{2+K(n-k)} \right) \alpha'( |a^*-b|^{1/2}) \\
 \leq & \ 2(n-k)k N |a-b|^{1/2} + \left(\frac{1}{2+K(n-k)} \right) \alpha'( (2+K(n-k))|a-b|^{1/2})\\
\end{split}
\end{equation}
which goes to zero when $b$ goes to $a$. This concludes our proof.

\end{proof}

Let us weaken the hypotheses of Proposition \ref{P3}.

\begin{prop}
\label{P4}
Let $I \subset \mathbb{R}^{2n+1-k}$ be a rectangle. Let $\phi : I \to \mathbb{R}^k$ be a continuous function such that there are $k \times ( 2n-k) $ continuous functions $w_{i,\ell}: I \to \mathbb{R}$ ( for $i, \dots, k$, $\ell=1, \dots, 2n-k$) such that for every $\gamma^{\ell}: [- \delta, \delta] \to I $ integral curve of the vector field $W^{\phi}_{\ell}$ ($\ell=1, \dots, 2n-k$) , the following holds: for every $t \in [-\delta, \delta]$
\begin{equation}
\label{condizionecurve}
\frac{d}{dt} \phi_i(\gamma^{\ell}(t))=w_{i,\ell}(\gamma^{\ell}(t)).
\end{equation}
Given a fixed rectangle $I' \Subset I$, for any other rectangle $I''$ such that $I' \Subset I'' \Subset I$ there exists a function
$$ \alpha: (0, \infty)\to [0, \infty)$$
which depends on $I''$, on $k$, on $ \{ \parallel \phi_j \parallel_{L_{\infty}(I'')} \}_{j=1, \dots, k}$, on $ \parallel [\omega_{i,j}]_{i,j} \parallel_{L_{\infty}(I'')}$ and on the modulus of continuity of $ \{\omega_{j,j+(n-k)} \}_{j=1, \dots, k, }$ on $I''$, such that, for $r$ sufficiently small:
\begin{itemize}
\item $$\sup \left\{ \frac{| \phi(a)-\phi(b)|}{|a-b|^{1/2}} : a,b \in I', \ 0< |a-b| \leq r \right\} \leq \alpha(r);$$
\item $$\lim_{r\to 0} \alpha(r)=0.$$
\end{itemize}
\end{prop}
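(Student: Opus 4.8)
The plan is to run the proof of Proposition \ref{P3} essentially verbatim, since the Euclidean $C^1$-regularity of $\phi$ entered there in only three ways: to guarantee that the integral curves $\gamma_a^j$ of the central fields $\nabla^{\phi_j}$ exist, to provide the first-order identity $\dot{\tau}_a^j(t)=\phi_j(\gamma_a^j(t))$ and the second-order identity $\ddot{\tau}_a^j(t)=\omega_{j,j+(n-k)}(\gamma_a^j(t))$, and---at the very end of step (i)---to rule out, by uniqueness, that two such curves issued from distinct points could meet. Under the present hypotheses the first identity is just the definition of an integral curve of $\nabla^{\phi_j}=\partial_{\eta_j}+\phi_j\partial_\tau$, the existence of the curves is furnished by Peano's theorem (the coefficients being continuous), and the second identity is precisely the content of the assumption (\ref{condizionecurve}) applied with $i=j$, $\ell=j+(n-k)$, which also guarantees that $t\mapsto\phi_j(\gamma_a^j(t))$ is of class $C^1$. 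I would keep the same constants $M_j,N,\beta_j$ and functions $\delta_j(r)$, the same nested rectangles, the same splitting of the quotient into a vertical contribution $R_1$ and a horizontal one $R_2$, and the same final reduction of a generic pair $a,b\in I'$ to a pair differing only in the $(\eta,\tau)$-coordinates by flowing along $\tilde{X}_j,\tilde{Y}_j$ to the auxiliary point $a^*$.

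The estimate for $R_2$ and the reduction to $a^*$ then transcribe with no change: both rely only on the integral form (\ref{condizionecurve}) of the increments of $\phi_i$ along the relevant integral curves and on the uniform bound $\lVert[w_{i,\ell}]\rVert_{L^\infty(I'')}\le N$. The telescoping along the polygonal path $b,d_1,\dots,d_k$ and the control of $|\tau'-\tau''|$ by $\sum_j M_j$ are integral identities valid for whichever integral curves happen to be chosen, so the possible non-uniqueness is harmless there; likewise the flow along the outer fields $\tilde{X}_j,\tilde{Y}_j$ is unique (smooth coefficients) and its increments are again governed by (\ref{condizionecurve}). Thus $R_2\le\sqrt{\sum_j M_j}\,(\sum_j\delta_j)+kNr^{1/2}$ follows once the vertical estimate (i) is available.

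The genuine obstacle is step (i), the half-H\"older bound $|\phi_j(a)-\phi_j(c)|\le\delta_j\,|\tau-\tau'|^{1/2}$ for $a,c$ differing only in $\tau$ with $\tau>\tau'$, because the curves of $\nabla^{\phi_j}$ need no longer be unique and the contradiction of Proposition \ref{P3}---that the two curves issued from $a$ and $c$ would have to meet---is no longer by itself absurd. To circumvent this I would argue by contradiction as before, assuming $|\phi_j(a)-\phi_j(c)|>\delta_j|\tau-\tau'|^{1/2}$ and setting $g(t)=\tau_a^j(t)-\tau_c^j(t)$, which by (\ref{condizionecurve}) is $C^2$ with $g(\eta_j)=\tau-\tau'>0$, $g'(\eta_j)=\phi_j(a)-\phi_j(c)$, and $|g''(t)|\le\beta_j(|\tau-\tau'|+2M_j|t-\eta_j|)$. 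Choosing $t$ as in (\ref{eq17})--(\ref{eq11}), the exact Taylor expansion (\ref{eq9}) still forces $g(t)<0$, so by continuity there is a first $t^*$ between $t$ and $\eta_j$ with $g(t^*)=0$; there the two curves occupy a common point $P$, whence $g'(t^*)=\phi_j(P)-\phi_j(P)=0$ as well. Instead of invoking uniqueness, I would then bound the initial slope by integrating the second derivative back from this crossing time, using $g'(t^*)=0$ to write $|g'(\eta_j)|=|\int_{t^*}^{\eta_j}g''(s)\,ds|\le|\eta_j-t^*|\sup_{[t^*,\eta_j]}\beta_j(|g(s)|)$, and then estimate $|\eta_j-t^*|\le 2k|\tau-\tau'|^{1/2}/\delta_j$ together with $\beta_j(|\tau-\tau'|+4kM_j|\tau-\tau'|^{1/4})\le\delta_j^2/(8k)$, both of which come straight from the definition of $\delta_j$. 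This yields $|\phi_j(a)-\phi_j(c)|=|g'(\eta_j)|\le\tfrac14\delta_j|\tau-\tau'|^{1/2}$, contradicting the assumption and establishing (i) with no recourse to uniqueness. With (i) in hand, step (ii), the $a^*$-reduction, and the assembly of the modulus are completed word for word as in Proposition \ref{P3}, so the resulting $\alpha$ depends only on $k$, on the $M_j=\lVert\phi_j\rVert_{L^\infty(I'')}$, on $N=\lVert[w_{i,\ell}]\rVert_{L^\infty(I'')}$, and on the moduli of continuity $\beta_j$ of the diagonal coefficients $w_{j,j+(n-k)}$.
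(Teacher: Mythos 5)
Your proposal is correct and follows essentially the same route as the paper: the paper likewise reruns Proposition \ref{P3}, takes $N=\Vert[\omega_{i,j}]\Vert_{L^\infty(I'')}$, and replaces the uniqueness-based contradiction by locating a crossing time $t^*$ with $\tau_a^j(t^*)=\tau_c^j(t^*)$, where the curves coincide and hence $g'(t^*)=\phi_j(\gamma_a^j(t^*))-\phi_j(\gamma_c^j(t^*))$ must vanish. The paper phrases the contradiction as $g'(t^*)<0$ via the forward Taylor expansion, while you integrate $g''$ backward from $t^*$ to contradict the assumed lower bound on $|g'(\eta_j)|$ — the same estimate rearranged.
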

\begin{proof}
The proof is analogous to the one of Proposition \ref{P3}, so we keep the same notations. Unique change is that we take $N:=\parallel [ \omega_{i,j} ]_{i,j}\parallel_{L^{\infty}(I'')}$.
In this setting we lose the uniqueness of the integral curves of $\nabla^{\phi}_j$ for $j=1, \dots, k$. This lack of uniqueness is replaced by requiring condition (\ref{condizionecurve}) on the curves: here, we still denote by $\gamma^j_a$ an arbitrarily chosen integral curve of $\nabla^{\phi_j}$ ($j=\ell-(n-k)$) of initial point $a=(v_{k+1}, \dots, w_n, \eta_1,\dots, \eta_k,w_{k+1}, \dots, w_n, \tau) \in J_i$ such that $\gamma_a^j(\eta_j)=a$. We assume that it is defined on $[ \eta_j- \varepsilon_{i+1,j}, \eta_j+ \varepsilon_{i+1,j}] \subset J_{i+1}$. Moreover, this loss of uniqueness implies that two curves could indeed meet each other, so the previous contradiction (\ref{assurdo}) would no longer hold in this case. We will therefore have to replace it with a different contradiction. This is inspired by results in \cite{bigser}. 

Suppose for the sake of simplicity that $j=1$.

As in the proof of Proposition \ref{P3}, in order to obtain (\ref{assurdo}), we fix $a,c \in J_k$ and we assume that they only differ for their vertical coordinate (we have fixed $\tau_a=\tau > \tau'=\tau_c$). We already proved that, if $\phi_1(a)- \phi_1(c)<0$, there exists $\bar{t} \in [ \eta_1, \eta_1+ \varepsilon_{k+1,j}]$  ( or $ \bar{t} \in [\eta_1- \varepsilon_{k+1,j}, \eta_1]$ if $\phi_1(a) - \phi_1(c)>0$) such that
$$ \tau_a^1(\bar{t}) -\tau_c^1(\bar{t}) <0,$$
while $\tau_a^1(\eta_1)=\tau > \tau'= \tau_c^1(\eta_1)$.
We can then define $$t^*:= \sup \{ t \in [ \eta_1, \eta_1+ \varepsilon_{k+1,1}] \ | \ t \leq \bar{t}, \  \tau_a^1(t) > \tau_c^1(t) \}.$$
We have $0 < t^* <\bar{t} \leq \eta_1+ \varepsilon_{k+1,1}$ and, by continuity, that $\tau_a^1(t^*)= \tau_c^1(t^*)$, hence
$$\gamma_a^1( t^*)= \gamma_c^1(t^*).$$

Let us prove that $ \phi_1(\gamma_a^1(t^*)) \neq \phi_1(\gamma_c^1(t^*))$, which will bring a contradiction. Then the proof will mirror the one in Proposition \ref{P3}. Obviously second order derivatives of $\tau^j_a$ and $\tau^j_c$ are replaced by $\omega_{j,j+(n-k)}$.  Remember that if $\phi_1(a)- \phi_1(c) <0$, we assume (\ref{eq27}), i.e. $\phi_1(a) - \phi_1(c) < -\delta_1 \sqrt{\tau-\tau'}$, so that
\begin{equation}
\begin{split}
 \phi_1( \gamma_a^1(t^*))- & \phi_1(\gamma_c^1(t^*))
 =  \phi_1(a)- \phi_1(c) + \int_{\eta_1}^{t^*} \omega_{1,n-k+1} (\gamma_a^1(s)) - \omega_{1,n-k+1} (\gamma_c^1(s))ds \\
 \leq & \ \phi_1(a)- \phi_1(c)+ (t^*- \eta_1) \beta_1( |\tau-\tau'|+ 2M_1 |t^*- \eta_1|) \\
 \leq & \ \phi_1(a)- \phi_1(c)+ (\bar{t}- \eta_1) \beta_1( |\tau-\tau'|+ 2M_1 |\bar{t}- \eta_1| )\\
\hphantom{\leq} &\text{keeping in mind (\ref{eq27}) and (\ref{eq11}),}\\
< & \ -\delta_1 \sqrt{\tau-\tau'}+ 2k \frac{\beta_1( |\tau-\tau'|+2M_1|\bar{t}- \eta_1|)}{\delta_1} \sqrt{\tau-\tau'} \\
 \leq  & \ -\delta_1 \sqrt{\tau-\tau'} + 2k \frac{ \beta_1( |\tau-\tau'|+4kM_1 \sqrt{|\tau-\tau'|}/ \delta_1)}{\delta_1} \sqrt{ \tau- \tau'} =
\end{split}
\end{equation}
if $\delta_1 <1$, (and we can choose $r$ small enough such that $\delta_j <1$ for any $j= 1, \dots, k$)
\begin{equation}
\begin{split}
=  & \ 2 \delta_1 \sqrt{\tau-\tau'} \left( -\frac{1}{2} + k \frac{ \beta_1( |\tau-\tau'|+4kM_1\sqrt{|\tau-\tau'|}/ \delta_1(r))}{\delta_1^2} \right) \\
< & \ 2 \delta_1 \sqrt{\tau-\tau'}  \left(-\frac{1}{2}+ \frac{k}{8k} \right)< 0.
\end{split}
\end{equation}
This proof, after small modification, also works for the case when $\phi_1(a) - \phi_1(c) >0$, starting from $\phi_1(\gamma^1_c(t^*))- \phi_1( \gamma^1_a(t^*))$ and using hypotheses (\ref{eq27}), and (\ref{eq17}); of course it works also for the curves $ \gamma_a^j$ and $\gamma_c^j$, for $j=2, \dots, k$,  so that
$$\frac{ \sum_{j=1}^k| \phi_j(a)- \phi_j(c)|}{ | \tau- \tau'|^{\frac{1}{2}}} \leq \sum_{j=1}^k \delta_j .$$
Hence (ii) has to be valid, and we can resume verbatim the proof of Proposition \ref{P3} from (\ref{eq35}).
\end{proof}

A compactness argument yields the following result.

\begin{prop} 
\label{P5}
Let $\Omega$ be an open set of $\mathbb{R}^{2n+1-k}$ and let $\phi : \Omega  \to \mathbb{R}^{k}$ be a continuous function such that there are $k \times (2n-k)$ continuous functions $\omega_{i,j}: \Omega \to \mathbb{R}$ ( for $i= 1, \dots, k$, $j=1, \dots, 2n-k$) such that for every $\gamma^j: [- \delta, \delta] \to \Omega$ integral curve of the vector field $W^{\phi}_j$ ($j=1, \dots, 2n- k$), the following holds:
$$\frac{d}{dt} \phi_i(\gamma^j(t))=w_{i,j}(\gamma^j(t)),$$
for any $t \in [-\delta, \delta]$.
Then, if we fix an open set $\Omega' \Subset \Omega$, we have that for any open $\Omega''$ such that $\Omega' \Subset \Omega'' \Subset \Omega$ there exists a function
$$ \alpha: (0, \infty)\to [0, \infty)$$
which depends on $\Omega''$, on $k$,  $ \{ \parallel \phi_j \parallel_{L_{\infty}(\Omega'')} \}_{j=1, \dots, 2n-k}$, on $\parallel J^{\phi} \phi  \parallel_{L_{\infty}(\Omega'')}$ and on the modulus of continuity of $ \{\omega_{j,j+(n-k)} \}_{j=1, \dots, k}$ on $\Omega''$, such that, for $r$ sufficiently small:

\begin{itemize}
\item $$\sup \left\{ \frac{| \phi(a)-\phi(b)|}{|a-b|^{1/2}} : a,b \in \Omega', \ 0< |a-b| \leq r \right\} \leq \alpha(r);$$
\item $$\lim_{r\to 0} \alpha(r)=0.$$

\end{itemize}
\end{prop}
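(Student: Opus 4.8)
The plan is to upgrade the rectangle estimate of Proposition \ref{P4} to the open set $\Omega'$ by a covering-and-compactness argument. Since $\Omega' \Subset \Omega'' \Subset \Omega$, the closure $\overline{\Omega'}$ is compact and contained in $\Omega''$, while $\overline{\Omega''} \subset \Omega$. First I would cover $\overline{\Omega'}$ by finitely many open rectangles $R_1, \dots, R_m$ with $\overline{R_s} \subset \Omega''$; such a finite cover exists by compactness. For each $s$ I would then choose rectangles $I''_s$ and $I_s$ with
$$R_s \Subset I''_s \Subset I_s \subset \Omega, \qquad I''_s \subset \Omega'',$$
which is possible because $\overline{R_s}$ is compact in $\Omega''$ and $\overline{\Omega''} \subset \Omega$.

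To each such triple I would apply Proposition \ref{P4} with $I = I_s$, $I' = R_s$ and $I'' = I''_s$. The hypotheses transfer by restriction: the identity (\ref{condizionecurve}) is assumed along every integral curve of $W^{\phi}_j$ in $\Omega$, hence in particular along those contained in $I_s \subset \Omega$, and continuity of the $\omega_{i,j}$ is inherited. Proposition \ref{P4} then produces, for each $s$, a modulus $\alpha_s : (0,\infty) \to [0,\infty)$ with $\alpha_s(r) \to 0$ as $r \to 0$, controlling the $\tfrac12$-H\"older quotient of $\phi$ for points of $R_s$ at scale $\leq r$. Because $\alpha_s$ depends only on $k$, on $\|\phi_j\|_{L^\infty(I''_s)}$, on $\|[\omega_{i,j}]_{i,j}\|_{L^\infty(I''_s)}$ and on the moduli of continuity of the $\omega_{j,j+(n-k)}$ on $I''_s$, and all of these are dominated by the corresponding quantities over $\Omega'' \supset I''_s$, each $\alpha_s$ is bounded by a modulus depending only on the data over $\Omega''$.

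The step that glues these local estimates into a global one is the Lebesgue number lemma. Let $\lambda > 0$ be a Lebesgue number for the open cover $\{R_1, \dots, R_m\}$ of the compact set $\overline{\Omega'}$. Then, for $r$ smaller than $\lambda$ (and smaller than the finitely many thresholds coming from the $\alpha_s$), any $a, b \in \Omega'$ with $0 < |a-b| \leq r$ form a set of diameter strictly less than $\lambda$, which therefore lies in a single rectangle $R_s$. The corresponding estimate gives
$$\frac{|\phi(a) - \phi(b)|}{|a-b|^{1/2}} \leq \alpha_s(r) \leq \max_{1 \leq t \leq m} \alpha_t(r) =: \alpha(r).$$
Taking the supremum over all such $a, b$ proves the first bullet, while $\alpha$, being a finite maximum of functions tending to $0$, satisfies $\lim_{r \to 0}\alpha(r) = 0$, proving the second; by the preceding paragraph $\alpha$ depends only on the listed quantities over $\Omega''$.

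The only genuinely delicate point is precisely this passage from the purely local control of Proposition \ref{P4} --- valid only for two points lying inside one rectangle --- to uniform control over all close pairs in $\Omega'$. The Lebesgue number is what guarantees that every sufficiently close pair is trapped inside one member of the finite cover, so no pair can straddle two rectangles and escape the estimate. The remaining work, namely checking that the finitely many moduli are dominated by data over $\Omega''$ and assembled into a single $\alpha$, is routine bookkeeping.
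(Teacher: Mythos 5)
Your proof is correct and follows essentially the same route as the paper: both reduce to Proposition \ref{P4} on finitely many rectangles covering the compact set $\overline{\Omega'}$ and then observe that, for $r$ small, any pair $a,b\in\Omega'$ with $|a-b|\leq r$ lies in a single rectangle of the cover. Your explicit appeal to the Lebesgue number lemma actually makes this last step more rigorous than the paper's somewhat informal statement that ``any point $b'$ belonging to the Euclidean ball $B_e(b,r)$ will be contained in $I_{r_{a_j}}$''.
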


\begin{proof}
From a compactness argument, if we have $\Omega' \Subset \Omega$,
 then for every $a \in \Omega'$, by Proposition \ref{P4}, we can find a neighbourhood $I_{r_a}(a)$ such that $I_{r_a}(a) \Subset \Omega''$ where
the thesis holds. These sets $ \{ I_{r_a}(a) \ | \ a \in \Omega' \}$ 
 cover $\overline{\Omega'}$ that is compact, we can extract a finite sub-covering such that $\overline{\Omega'} \subseteq \cup_{i=1, \dots,k} I_{r_{a_i}}(a_i)$. If we now consider $b \in \Omega'$, surely $b \in I_{r_{a_j}}$ for some $j \in \{ 1, \dots, k \}$. If $r$ is small enough, any point $b'$ belonging to the Euclidean ball $B_e(b,r)$ will be contained in $I_{r_{a_j}}$. \qedhere
\end{proof}

\section{Equivalences}

\begin{prop}
\label{PD}
Let $\Omega \subset \mathbb{R}^{2n+1-k}$ be an open set and let $\phi: \Omega  \to \mathbb{R}^k$ be a continuous function and $a, b \in \Omega$ be the points
\begin{equation*}
a= (v_{k+1}, \dots, v_n, \eta_1, \dots, \eta_k, w_{k+1}, \dots, w_n , \tau) \qquad
b= (v'_{k+1}, \dots, v'_n, \eta'_1, \dots, \eta'_k, w'_{k+1}, \dots, w'_n , \tau' ).\\
\end{equation*}
Let us consider the following function, $\rho_{\phi}$, analogous of the one considered in \cite{Articolo}.
Set $$\xi:=(v_{k+1}-v_{k+1}', \dots, v_n-v_n',\eta_1-\eta_1', \dots,  \eta_k-\eta_k', w_{k+1}-w_{k+1}', \dots, w_n-w_n'),$$
\begin{equation}
\label{defrofi}
\rho_{\phi}(a,b):=\max \{  \ |\xi | ,|\tau-\tau'+ \frac {1}{2} \sum_{j=1}^k (\phi_j'+ \phi_j)(\eta_j'-\eta_j) + \sigma(v,w,v',w)|^{\frac{1}{2}} \}  ,
\end{equation}
where $\sigma(v,w,v',w'):=\frac{1}{2} \sum_{j=k+1}^n(v_jw_j'-v_j'w_j)$, $\phi_j:=\phi_j(a)$ and $\phi_j':=\phi_j(b)$ for $j=1, \dots, k$.

If there exists a constant $c>0$ such that
$$ | \phi(a)-\phi(b) | \leq  c \ \rho_{\phi}(a,b)$$ for every $a,b \in \Omega$, then $\phi$ is intrinsic Lipschitz.
\end{prop}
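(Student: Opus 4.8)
The plan is to deduce the intrinsic Lipschitz estimate of Definition \ref{DefIntLip} from the hypothesis by comparing $\rho_\phi$ directly with the graph-distance $d_\phi$. Since $\mathbb{H}$ is horizontal, hence commutative and isometric to $\mathbb{R}^k$, we have $\|\phi(b)^{-1}\cdot\phi(a)\|_\infty = |\phi(a)-\phi(b)|$, so that showing $\phi$ intrinsic Lipschitz amounts to producing a constant $c'>0$ with $|\phi(a)-\phi(b)| \le c'\,d_\phi(a,b)$ for all $a,b\in\Omega$. As the hypothesis already gives $|\phi(a)-\phi(b)| \le c\,\rho_\phi(a,b)$, the task reduces to controlling $\rho_\phi(a,b)$ by $d_\phi(a,b)$, up to the quantity $|\phi(a)-\phi(b)|$ itself.

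The two functions share the identical horizontal part $|\xi|$ and differ only in the vertical argument. Writing $V_d$ and $V_\rho$ for the scalars whose square roots enter the explicit expressions of $d_\phi(a,b)$ and of $\rho_\phi(a,b)$ in (\ref{defrofi}), a direct computation gives $V_\rho - V_d = \frac{1}{2}\sum_{j=1}^k(\phi_j(a)-\phi_j(b))(\eta_j'-\eta_j)$. By Cauchy--Schwarz this yields $|V_\rho - V_d| \le \frac{1}{2}\,|\phi(a)-\phi(b)|\,|\xi|$, where both factors on the right are controlled by $d_\phi(a,b)$, since $|\xi|\le d_\phi(a,b)$ and $|V_d|\le d_\phi(a,b)^2$. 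The essential feature, and the source of the only real difficulty, is that this discrepancy is bounded not by $d_\phi$ outright, but by the product of $d_\phi$ with the very quantity $L:=|\phi(a)-\phi(b)|$ that we are trying to estimate.

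To close this feedback loop I would argue by cases on where the maximum defining $\rho_\phi(a,b)$ is attained. If $\rho_\phi(a,b)=|\xi|$, then immediately $L\le c\,|\xi|\le c\,d_\phi(a,b)$. Otherwise $\rho_\phi(a,b)=|V_\rho|^{1/2}$, and with $d:=d_\phi(a,b)$ the previous estimates give $L\le c\,|V_\rho|^{1/2}\le c\bigl(|V_d|+\tfrac{1}{2}L|\xi|\bigr)^{1/2}\le c\bigl(d^2+\tfrac{1}{2}Ld\bigr)^{1/2}$. Squaring produces the quadratic inequality $L^2-\tfrac{c^2}{2}dL-c^2d^2\le 0$, whose positive root is a fixed multiple of $d$; hence $L\le c''d$ for a constant $c''$ depending only on $c$. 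Taking $c'=\max\{c,c''\}$ settles both cases and gives the intrinsic Lipschitz bound $|\phi(a)-\phi(b)|\le c'\,d_\phi(a,b)$.

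The main obstacle is precisely the self-referential term $\tfrac{1}{2}L|\xi|$ appearing in the comparison of the two vertical parts: the quadratic absorption step is what converts the hypothesis, stated through $\rho_\phi$, into the genuine intrinsic Lipschitz estimate through $d_\phi$. Everything else (the identification of $\|\cdot\|_\infty$ on $\mathbb{H}$ with the Euclidean norm, and the explicit formulas for $d_\phi$ and $\rho_\phi$) is already available in the preceding material, so no further ingredient is needed.
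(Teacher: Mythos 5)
Your proposal is correct and follows essentially the same route as the paper: the same case split on where the maximum defining $\rho_{\phi}$ is attained, and the same key observation that the vertical parts of $\rho_{\phi}$ and $d_{\phi}$ differ exactly by $\tfrac{1}{2}\sum_{j=1}^k(\phi_j(a)-\phi_j(b))(\eta_j'-\eta_j)$, which must then be absorbed. The only (immaterial) difference is the absorption mechanism: the paper applies a Young-type inequality with a parameter $\varepsilon$ chosen as $\varepsilon=\tfrac{2}{ck}$ and moves the resulting $\tfrac12|\phi(a)-\phi(b)|$ term to the left, whereas you square the estimate and solve the resulting quadratic inequality in $L=|\phi(a)-\phi(b)|$; both yield a constant depending only on $c$ and $k$.
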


\begin{proof}
If $| \phi(a)-\phi(b) |\leq c |(v_{k+1}-v_{k+1}', \dots, v_n-v_n',\eta_1-\eta_1', \dots,  \eta_k-\eta_k', w_{k+1}-w_{k+1}', \dots, w_n-w_n') |$ the thesis is valid. 

Let us then consider the case

\begin{equation*}
\begin{split}
| \phi(a)-\phi(b) | & \leq c \  |\tau-\tau'+ \frac {1}{2} \sum_{j=1}^k (\phi_j'+ \phi_j)(\eta_j'-\eta_j) + \sigma(v,w,v',w)|^{\frac{1}{2}}\\
&= c \ |\tau-\tau'+ \sum_{j=1}^k \phi_j'(\eta_j'-\eta_j) +\frac{1}{2} \sum_{j=1}^k (\phi_j- \phi_j')(\eta_j'-\eta_j) + \sigma(v,w,v',w)|^{\frac{1}{2}}\\
& \hphantom{\leq} \  \text{for any } \varepsilon >0\\
&\leq c \left( d_{\phi}(a,b)+ \frac{1}{2} \sum_{j=1}^k \left|(\phi_j- \phi_j') \ \varepsilon \  \left (\frac{\eta_j'-\eta_j}{\varepsilon} \right)\right|^{\frac{1}{2}} \right)\\
&\leq c \left(d_{\phi}(a,b) + \sum_{j=1}^k \left(\frac{1}{4}  \ \varepsilon \ | \phi_j- \phi_j'| + \frac{1}{4}\frac{| \eta_j'-\eta_j|}{\varepsilon} \right) \right)\\
&\leq c \left(d_{\phi}(a,b) + k \frac{1}{4}   \ \varepsilon  \ | \phi(a)- \phi(b) | + k \frac{1}{4}\frac{d_{\phi}(a,b)}{\varepsilon} \right).\\
\end{split}
\end{equation*}
If we now fix $\varepsilon=\frac{2}{c k}$, we finally get
$$ | \phi(a)- \phi(b) | \leq 2 \ \left(c+ \frac{k^2c^2}{4} \right) \ d_{\phi}(a,b).$$
\end{proof}

\begin{prop}
\label{2implica3}
Let $\Omega$ be an open set in $\mathbb{R}^{2n+1-k}$.
Given an intrinsic Lipschitz function $\phi: \Omega  \to  \mathbb{R}^k,$ there exists a constant $c>0$ such that
$$ \rho_{\phi}(a,b) \leq c \ d_{\phi}(a,b)$$
for every $a,b \in \Omega$
\end{prop}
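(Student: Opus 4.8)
The plan is to exploit the fact that $\rho_\phi$ and $d_\phi$ share the very same horizontal entry $|\xi|$ inside the maximum, so that only the vertical (square-root) term needs to be controlled. First I would compute the difference of the two vertical arguments: subtracting the argument of $d_\phi$ from that of $\rho_\phi$ leaves exactly $\frac{1}{2}\sum_{j=1}^k(\phi_j-\phi_j')(\eta_j'-\eta_j)$, since the terms $\tau-\tau'$ and $\sigma$ cancel and $\frac{1}{2}(\phi_j'+\phi_j)-\phi_j'=\frac{1}{2}(\phi_j-\phi_j')$. This isolates the single quantity that must be estimated.

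Next, by the triangle inequality applied before taking square roots, the absolute value appearing in $\rho_\phi$ is at most the absolute value appearing in $d_\phi$ plus $\frac{1}{2}\sum_{j=1}^k|\phi_j-\phi_j'|\,|\eta_j'-\eta_j|$. The first summand is bounded by $d_\phi(a,b)^2$ directly from the definition of $d_\phi$. The decisive step is bounding the correction, and here I would invoke the intrinsic Lipschitz hypothesis. Since $\mathbb{H}$ is horizontal, $\parallel\phi(b)^{-1}\cdot\phi(a)\parallel_\infty$ coincides with the Euclidean quantity $|\phi(a)-\phi(b)|$, so Definition \ref{DefIntLip} yields $|\phi_j-\phi_j'|\le|\phi(a)-\phi(b)|\le c\,d_\phi(a,b)$; meanwhile $|\eta_j'-\eta_j|\le|\xi|\le d_\phi(a,b)$ is immediate. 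Each of the $k$ products is therefore at most $c\,d_\phi(a,b)^2$, giving a total of $\frac{ck}{2}\,d_\phi(a,b)^2$ for the correction.

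Combining the two estimates, the vertical argument of $\rho_\phi$ is at most $\bigl(1+\tfrac{ck}{2}\bigr)d_\phi(a,b)^2$, whence its square root is at most $\sqrt{1+\tfrac{ck}{2}}\,d_\phi(a,b)$. Since $|\xi|\le d_\phi(a,b)$ as well, taking the maximum of the two entries gives $\rho_\phi(a,b)\le\sqrt{1+\tfrac{ck}{2}}\,d_\phi(a,b)$, which is the desired conclusion with $c'=\sqrt{1+\tfrac{ck}{2}}$.

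I do not expect a genuine obstacle here: the argument is essentially bookkeeping. The only point requiring care is recognizing that the discrepancy between $\rho_\phi$ and $d_\phi$ is precisely a product in which one factor, $|\phi_j-\phi_j'|$, is controlled by the intrinsic Lipschitz hypothesis and the other, $|\eta_j'-\eta_j|$, is trivially controlled by the horizontal part $|\xi|$; it is this pairing that produces a bound quadratic in $d_\phi$ which then survives the square root. In effect this runs the mechanism of Proposition \ref{PD} in the reverse direction.
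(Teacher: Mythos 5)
Your proof is correct, but it follows a genuinely different route from the paper's. You work entirely in coordinates: you observe that the two maxima share the horizontal entry $|\xi|$, isolate the discrepancy of the vertical arguments as $\frac{1}{2}\sum_{j=1}^k(\phi_j-\phi_j')(\eta_j'-\eta_j)$, and control that product by pairing the intrinsic Lipschitz bound $|\phi_j-\phi_j'|\leq c\,d_{\phi}(a,b)$ with the trivial bound $|\eta_j'-\eta_j|\leq|\xi|\leq d_{\phi}(a,b)$, so that the correction is quadratic in $d_{\phi}$ and survives the square root; this yields $\rho_{\phi}\leq\sqrt{1+ck/2}\;d_{\phi}$, and, as you note, it is indeed Proposition \ref{PD} run in reverse. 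The paper instead avoids all coordinate bookkeeping: it first checks that $\rho_{\phi}(a,b)\leq d_{\infty}(\Phi(a),\Phi(b))=\parallel\Phi(b)^{-1}\cdot\Phi(a)\parallel_{\infty}$, then splits $\Phi(b)^{-1}\cdot\Phi(a)$ into its $\mathbb{M}$- and $\mathbb{H}$-components and applies subadditivity of the homogeneous norm to get $\rho_{\phi}(a,b)\leq d_{\phi}(a,b)+|\phi(a)-\phi(b)|\leq(1+\mathrm{Lip}(\phi))\,d_{\phi}(a,b)$. The paper's argument is shorter and conceptually transparent (it only uses that $\rho_{\phi}$ is dominated by the graph distance and that the Lipschitz condition controls the $\mathbb{H}$-component), but it hides a coordinate verification in its first inequality; your argument makes every step explicit at the level of the defining formulas (\ref{eq15}) and (\ref{defrofi}) and exposes exactly where the symmetrization $\frac{1}{2}(\phi_j+\phi_j')$ versus $\phi_j'$ matters. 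Both give the stated conclusion with an explicit constant.
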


\begin{proof}
By direct computations and by Proposition \ref{Propnorme}, we have
\begin{equation}
\begin{split}
  \ \ \rho_{\phi}(a,b)  \leq  & \ \  d_{\infty}(\Phi(a), \Phi(b))\\
= & \ \parallel  \Phi(b)^{-1} \cdot \Phi(a) \parallel_{\infty}\\
= & \ \parallel (i(b) \cdot j(\phi(b)))^{-1} \cdot i(a)  \cdot j(\phi(a)) \parallel_{\infty}\\
\leq & \ \parallel j(\phi(b))^{-1} \cdot i(b)^{-1} \cdot i(a) \cdot j(\phi(b)) \parallel_{\infty} + \parallel j(\phi(b))^{-1} \cdot  j(\phi(a)) \parallel_{\infty}\\
 = & \   d_{\phi}(a,b) + |\phi(a)-\phi(b)| \\
 \leq &  \ (1+ \mathrm{Lip}(\phi)) \  d_{\phi}(a,b).\\
\end{split}
\end{equation}
\end{proof}

\begin{teo}
\label{T2}
Let $\Omega \subset \mathbb{R}^{2n+1-k}$ be an open set and let $\phi: \Omega \to \mathbb{R}^k$ be a continuous function. If, for a certain $a \in \Omega$ and for $\ell=1, \dots, 2n-k$, we have that there exist $0< \delta_2 < \delta_1$ and a family  of exponential maps near $a$
$$\mathrm{exp}_a(s W^{\phi}_{\ell})(b): [-\delta_2, \delta_2] \times \overline{I_{\delta_2}(a) }\to \overline{I_{\delta_1}(a) }$$ 
and if for any $ \  \Omega' \Subset \Omega$
\begin{equation}
\label{hol} \lim_{r \to 0+} \sup \left\{ \frac{ | \phi(b)- \phi(b') | }{ | b'-b |^{1/2}}  \ | \  b,b' \in \Omega', \ 0 < |b'-b| \leq r \right\}=0
\end{equation}
then $\phi$ is uniformly intrinsic differentiable at $a$ and therefore the $(i,\ell)$-th component of the matrix that represents the intrinsic Jacobian at $a$, $[J^{\phi} \phi]_{i,\ell}$ equals $$\frac{d}{ds} \phi_i(\mathrm{exp}_a(s W^{\phi}_{\ell})(a)) \big{|}_{s=0}.$$
\end{teo}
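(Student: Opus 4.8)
My plan is to exhibit the candidate intrinsic differential explicitly and then verify the defining limit in Definition \ref{DEFUID2} by joining nearby points with the exponential maps. Set $M:=[\omega_{i,\ell}(a)]_{i,\ell}\in M_{k,2n-k}(\mathbb{R})$, where the continuous functions $\omega_{i,\ell}$ are those attached to the family of exponential maps in Definition \ref{D1}, and let $L:\mathbb{R}^{2n+1-k}\to\mathbb{R}^k$ be the $\star$-linear map represented by $M$, so that $L(m)=M\,\pi(m)^T$. Evaluating (\ref{catena}) at the base point $a$ and differentiating at $s=0$ already gives $\frac{d}{ds}\phi_i(\mathrm{exp}_a(sW^\phi_\ell)(a))\big|_{s=0}=\omega_{i,\ell}(a)$; hence, once $\phi$ is shown to be uniformly intrinsic differentiable at $a$ with differential $L$, uniqueness of the intrinsic differential forces $[J^\phi\phi(a)]_{i,\ell}=\omega_{i,\ell}(a)$, which is the last assertion. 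Everything therefore reduces to bounding $|\phi(q)-\phi(p)-L(p^{-1}\star q)|$ by $o(d_\phi(q,p))$, uniformly for $p,q\in I_r(a)$ as $r\to0$.

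First I would connect $p$ to $q$ by a broken path of $2n-k$ consecutive exponential-map segments, one for each $W^\phi_\ell$: flow along $\tilde X_{k+1},\dots,\tilde X_n$ to match the $v$-coordinates, then along $\nabla^{\phi_1},\dots,\nabla^{\phi_k}$ to match the $\eta$-coordinates, then along $\tilde Y_{k+1},\dots,\tilde Y_n$ to match the $w$-coordinates. For $r$ small the whole path remains inside $\overline{I_{\delta_2}(a)}$, so all maps are defined (a routine shrinking of $r$). The $\ell$-th segment has time equal to the $\ell$-th horizontal coordinate difference $\Delta_\ell$ between $q$ and $p$, so the path ends at a point $\bar q$ having exactly the horizontal coordinates of $q$, only its vertical coordinate $\bar\tau$ possibly differing from $\tau^q$; since none of the $W^\phi_\ell$ has a vertical component, this discrepancy is unavoidable and must be treated separately. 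Telescoping (\ref{catena}) along the path gives $\phi_i(\bar q)-\phi_i(p)=\sum_\ell\int_0^{\Delta_\ell}\omega_{i,\ell}(\gamma_\ell(r))\,dr$, whereas $L(p^{-1}\star q)_i=\sum_\ell\omega_{i,\ell}(a)\,\Delta_\ell$. Subtracting, using $\sum_\ell|\Delta_\ell|\le C|\xi|\le C\,d_\phi(q,p)$ and the continuity of the $\omega_{i,\ell}$ (all path points lie in a neighbourhood of $a$ shrinking with $r$), I obtain $|\phi(\bar q)-\phi(p)-L(p^{-1}\star q)|\le\varepsilon(r)\,d_\phi(q,p)$ with $\varepsilon(r)\to0$.

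It remains to estimate $|\phi(q)-\phi(\bar q)|$, and this is where the little-Hölder hypothesis (\ref{hol}) enters. Since $q$ and $\bar q$ share all horizontal coordinates, $|q-\bar q|=|\tau^q-\bar\tau|$, so (\ref{hol}) gives $|\phi(q)-\phi(\bar q)|\le\alpha(r)\,|\tau^q-\bar\tau|^{1/2}$ with $\alpha(r)\to0$. Thus the core estimate is $|\tau^q-\bar\tau|\le C\,d_\phi(q,p)^2$. Writing $\bar\tau$ as the vertical increment accumulated over the three phases and comparing it with the explicit expression for $d_\phi(q,p)$ (whose squared vertical part $E=\tau^q-\tau^p+\sum_j\phi_j(p)(\eta^p_j-\eta^q_j)+\sigma(v^q,w^q,v^p,w^p)$ satisfies $|E|\le d_\phi(q,p)^2$), the difference $\tau^q-\bar\tau-E$ splits into a bilinear cross term of size $O(|\xi|^2)$ and the term $-\sum_i\int_0^{\Delta_i}[\phi_i(\gamma_i(r))-\phi_i(p)]\,dr$ produced by the $\nabla^{\phi_i}$-phase.

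The \emph{main obstacle} is precisely this last integral term: bounding $\phi_i(\gamma_i(r))-\phi_i(p)$ by the $\tfrac12$-Hölder estimate alone only yields $O(d_\phi^{3/2})$, which is too weak and would make the final quotient blow up like $d_\phi^{-1/4}$. The remedy is to bound the increment linearly instead, applying (\ref{catena}) segment by segment and using that the $\omega_{i,\ell}$ are bounded near $a$: this gives $|\phi_i(\gamma_i(r))-\phi_i(p)|\le C\cdot(\text{total flow time})\le C|\xi|\le C\,d_\phi(q,p)$, and integrating in $r$ over an interval of length $|\Delta_i|\le C\,d_\phi(q,p)$ produces $O(d_\phi^2)$. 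Hence $|\tau^q-\bar\tau|=O(d_\phi(q,p)^2)$, so $|\phi(q)-\phi(\bar q)|\le C\alpha(r)\,d_\phi(q,p)$ with $\alpha(r)\to0$. Adding the two contributions bounds the quotient by $\varepsilon(r)+C\alpha(r)$, which tends to $0$ as $r\to0$; this proves that $\phi$ is uniformly intrinsic differentiable at $a$ with differential $L$ and, as noted above, yields $[J^\phi\phi(a)]_{i,\ell}=\frac{d}{ds}\phi_i(\mathrm{exp}_a(sW^\phi_\ell)(a))\big|_{s=0}$.
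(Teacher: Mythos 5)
Your proposal is correct, and while it shares the paper's overall skeleton (join $p$ to $q$ by a broken path of exponential-map segments, telescope the chain rule (\ref{catena}), use uniform continuity of the $\omega_{i,\ell}$ for the linear part, and invoke the little-H\"older hypothesis only for the residual vertical displacement), it handles the crucial vertical estimate by a genuinely different and leaner route. The paper introduces the auxiliary quasi-distance $\rho_{\phi}$ of (\ref{defrofi}), whose vertical part carries the \emph{averaged} coefficient $\tfrac12(\phi_j+\phi_j')$; it then bounds $|\tau'-\tau_k^*|$ by a trapezoid-rule cancellation of order $O(|\eta_j'-\eta_j|^2)$, an absorption inequality in which the term $\upsilon_{\phi_j}^2|\tau'-\tau_k^*|$ is carried to the left-hand side, and finally converts $\rho_{\phi}$ back into $d_{\phi}$ through the intrinsic-Lipschitz detour of Propositions \ref{PD} and \ref{2implica3}. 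You instead compare $\int_0^{\Delta_i}\phi_i(\gamma_i(r))\,dr$ with the rectangle value $\phi_i(p)\Delta_i$ appearing in the vertical part of $d_{\phi}(q,p)$ itself, and you correctly identify that the $\tfrac12$-H\"older bound on the integrand would only give $O(d_{\phi}^{3/2})$; the key observation that rescues the estimate is that (\ref{catena}) and the boundedness of the $\omega_{i,\ell}$ on $\overline{I_{\delta_1}(a)}$ give the \emph{linear} bound $|\phi_i(\gamma_i(r))-\phi_i(p)|\le C|\xi|\le C\,d_{\phi}(q,p)$, whence the rectangle-rule error is $O(d_{\phi}^2)$ and, together with the $O(|\xi|^2)$ bilinear cross term from the $\tilde X$, $\tilde Y$ drifts, yields $|\tau^q-\bar\tau|\le C\,d_{\phi}(q,p)^2$ directly. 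This buys you a proof that never leaves the metric $d_{\phi}$, dispenses with $\rho_{\phi}$, the second-order Taylor/trapezoid argument and the self-absorption step, at the cost of nothing beyond what is already in Definition \ref{D1}; the remaining assembly (uniqueness of the intrinsic differential to identify $[J^{\phi}\phi(a)]_{i,\ell}$ with $\omega_{i,\ell}(a)=\frac{d}{ds}\phi_i(\mathrm{exp}_a(sW^{\phi}_{\ell})(a))\big|_{s=0}$, and the routine shrinking of $r$ so that all segments stay in $\overline{I_{\delta_2}(a)}$) matches the paper.
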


\begin{proof}
We set
\begin{equation}
\begin{split}
a &= ( \bar{v}_{k+1}, \dots, \bar{v}_n, \bar{\eta}_1, \dots, \bar{\eta}_k, \bar{w}_{k+1}, \dots, \bar{w}_m,  \bar{\tau}) \in \Omega;\\
b &=(v_{k+1}, \dots, v_n, \eta_1, \dots, \eta_k, w_{k+1}, \dots, w_n, \tau) \in I_{\delta_0}(a)\\
b' &=(v_{k+1}', \dots, v_n', \eta_1', \dots, \eta_k', w_{k+1}', \dots, w_n', \tau') \in I_{\delta_0}(a)\\
\end{split}
\end{equation}
for $\delta_0$ small; we get
\begin{equation}
\label{eqdelta0}
|( v_{k+1}'-v_{k+1}, \dots, v_n'-v_n,\eta_1'-\eta_1, \dots,  \eta_k'- \eta_k, w_{k+1}'-w_{k+1}, \dots, w_n'-w_n)| \leq 2(2n-k) \delta_0  .
\end{equation}
Just to simplify the computation we assume $\eta_i' \geq \eta_i$, for $i=1, \dots, k$.

Let us define the vector field $$ \bar{X}:= \sum_{j=k+1}^n (v_j'-v_j)W^{\phi}_{j-k}+ (w_j'-w_j)W^{\phi}_{j+(n-k)}.$$

We start moving from $b$ to $b_0^*:=\mathrm{exp}_a(\bar{X})(b)$, then we move for a time $\eta_1'-\eta_1$ along the exponential map of $W^{\phi}_{n-k+1}=\nabla^{\phi_1}$ with initial point $b_0^*$. We arrive at a point $b_1^*$ and then we move for time $\eta_2'-\eta_2$ along the exponential map of $W^{\phi}_{n-k+2}=\nabla^{\phi_2}$ with initial point $b_1^*$. We denote by $b_2^*$ the endpoint of this piecewise integral curve and we iterate the process to get
$$b_{j+1}^*:= \mathrm{exp}_a((\eta_{j+1}'-\eta_{j+1})W^{\phi}_{(n-k)+j+1}(b_j^{*})=\mathrm{exp}_a((\eta_{j+1}'-\eta_{j+1})\nabla^{\phi_{j+1}})(b_j^*) \ \ \ \ \ \ j=0, \dots, k-1.$$
the coordinates of $b_k^*$ equal those of $b'$, except for the vertical one that will be denoted by $\tau_k^*$:
\begin{equation}
\label{tauk}
\tau_k^*= \tau+ \sum_{j=1}^k \int_0^{\eta_j'-\eta_j} \phi_j(\mathrm{exp}_a(r \nabla^{\phi_j})(b_{j-1}^*))dr + \sigma(v,w,v',w').
\end{equation}

The point $b_k^*$ belongs to a cube $I_{C \delta_0+ D \delta_0^2} (a)$ for some positive constant $C$ and $D$. In fact
\begin{equation*}
\begin{split}
&| \tau_k^*- \bar{\tau}| = |\tau+\sum_{j=1}^k \int_0^{\eta_j'-\eta_j} \phi_j(\mathrm{exp}_a(r \nabla^{\phi_j})(b_{j-1}^*))dr + \sigma(v,w,v',w')- \bar{\tau}|  \\
 \leq &|\tau-\bar{\tau}|+ \sum_{i=1}^k (\eta_i'-\eta_i) \max_{\overline{I_{\delta_1(a)}}} | \phi_i |+ \frac{1}{2}| \sum_{i=k+1}^n( v_i(w_i'-w_i)-w_i(v_i'-v_i))|\\
\leq & |\tau-\bar{\tau}|+ \sum_{i=1}^k (\eta_i'-\eta_i) \max_{\overline{I_{\delta_1(a)}}} | \phi_i |+ \frac{1}{2}| \sum_{i=k+1}^n( (v_i-\bar{v}_i+\bar{v}_i)(w_i'-w_i)-(w_i-\bar{w}_i+\bar{w}_i)(v_i'-v_i))|\\
\leq & |\tau-\bar{\tau}|+ \sum_{i=1}^k |\eta_i'-\eta_i| \max_{\overline{I_{\delta_1(a)}}} | \phi_i |+  \frac{1}{2}\sum_{i=k+1}^n( |v_i-\bar{v}_i|+|\bar{v}_i|)|w_i'-w_i|+(|w_i-\bar{w}_i|+|\bar{w}_i|)|v_i'-v_i|\\
 \leq & |\tau-\bar{\tau}|+ \sum_{i=1}^k (\eta_i'-\eta_i) \max_{\overline{I_{\delta_1(a)}}} | \phi_i | \\
\hphantom{\leq}&+\frac{1}{2} \sum_{i=k+1}^n ((|\bar{v}_i|+\delta_0)(|w_i'-\bar{w}_i|+|w_i-\bar{w}_i|)+(|\bar{w}_i|+\delta_0)(|v_i'-\bar{v}_i|+|v_i-\bar{v}_i|))\\
\leq & \  \delta_0+ \sum_{i=1}^k \delta_0 \max_{\overline{I_{\delta_1(a)}}} | \phi_i | \\
 \hphantom{\leq}& + \frac{1}{2}\sum_{i=k+1}^n ((|\bar{v}_i|+\delta_0)(2 \delta_0)+(|\bar{w}_i|+\delta_0)(2 \delta_0))\\
 < & \  C \delta_0 + D \delta_0^2.
\end{split}
\end{equation*}
We can now consider
\begin{equation}
\label{eq1}
\begin{split}
\ \phi(b')-\phi(b)
= & \ \phi(b')- \phi(b_k^*)+ \sum_{i=1}^k( \phi(b_i^*)- \phi(b_{i-1}^*)) + \phi(b_0^*)- \phi(b) \\
= & \ \phi(b')- \phi(b_k^*)+ \sum_{i=1}^k ( \phi(\mathrm{exp}_a((\eta_i'-\eta_i) \nabla^{\phi_i})(b_{i-1}^*))- \phi(b_{i-1}^*)) + \phi(b_0^*)- \phi(b)\\
= & \phi(b')- \phi(b_k^*)+  \sum_{j=1}^k \begin{bmatrix}
\int_0^{\eta_j'-\eta_j} \omega_{1,j+(n-k)}(\mathrm{exp}_a(r \nabla^{\phi_j})(b_{j-1}^*))dr\\
\dots\\
\int_0^{\eta_j'-\eta_j} \omega_{k,j+(n-k)}(\mathrm{exp}_a(r \nabla^{\phi_j})(b_{j-1}^*))dr
\end{bmatrix}\\
&\hphantom{\phi(b')- \phi(b_k^*)}+\sum_{j=k+1}^n \begin{bmatrix}
\int_0^1 (v_j'-v_j)\omega_{1,j-k}(\mathrm{exp}_a(r\bar{X})(b))dr\\
\dots\\
\int_0^1 (v_j'-v_j) \omega_{k,j-k}(\mathrm{exp}_a(r \bar{X})(b))dr\\
\end{bmatrix}\\
&\hphantom{\phi(b')- \phi(b_k^*)}+\sum_{j=k+1}^n \begin{bmatrix}
\int_0^1 (w_j'-w_j)\omega_{1,j+(n-k)}(\mathrm{exp}_a(r\bar{X})(b))dr\\
\dots\\
\int_0^1 (w_j'-w_j) \omega_{k,j+(n-k)}(\mathrm{exp}_a(r \bar{X})(b))dr\\
\end{bmatrix}.
\end{split}
\end{equation}

\textbf{Claim 1}: For any $i= 1, \dots, k$, for $\ell=n-k+1, \dots, n$, $j=\ell-(n-k)$ so $j=1, \dots, k$
$$ \int_0^{\eta_j'-\eta_j} \omega_{i,\ell}(\mathrm{exp}_a(r \nabla^{\phi_j})(b_{j-1}^*))dr= \omega_{i,\ell}(a)( \eta_j'-\eta_j) + o ( | \eta_j'-\eta_j|)  \ \ \ \mathrm{as  } \ \delta_0\to 0.$$

\begin{proof} Fix $i \in \{ 1, \dots, k\}$ and consider for every $j$
$$ \int_0^{\eta_j'-\eta_j} \omega_{i,\ell}(\mathrm{exp}_a(r \nabla^{\phi_j})(b_{j-1}^*))- \omega_{i,\ell}(a)dr+ \omega_{i,\ell}(a) ( \eta_j'-\eta_j) .$$
We want to prove that
$$\lim_{\delta_0 \to 0} \frac{1}{\eta_j'-\eta_j} \int_0^{\eta_j'-\eta_j} \omega_{i,\ell}(\mathrm{exp}_a(r \nabla^{\phi_j})(b_{j-1}^*))- \omega_{i,\ell}(a)dr= 0.$$
Let us first show that 
$$ |\omega_{i,\ell} ( b_0^*) - \omega_{i,\ell}(a)|= o (1)  \ \ \ \ \text{ as } \delta_0 \to 0.$$
In fact, 
\begin{equation*}
\begin{split}
 & \ | \omega_{i,\ell} ( b_0^*) - \omega_{i,\ell}(a)|\\
  \leq  & \  | \omega_{i,\ell} ( b_0^*) - \omega_{i,\ell}(b)|+ |\omega_{i,\ell} ( b) - \omega_{i,\ell}(a)|\\
\leq & \  \beta_{i,\ell}(|b_0^*-b|)+\beta_{i,\ell} (| b - a|),
\end{split}
\end{equation*}
where $\beta_{i \ell}$ is the modulus of continuity of $\omega_{i,\ell}$. Let us
 now observe that the two terms go to zero. Indeed, $\omega_{i,\ell}$ is continuous by hypothesis. Since (\ref{eqdelta0}) holds, we have that $ |(v',w')-(v,w)|\to 0$ as $\delta_0 \to 0$, and 
we can then find a real number $\bar{\delta}>0$ such that 
$|(v',w')-(v,w)| \leq c \delta_0 < \delta \ \ \text{for } \delta_0 < \bar{\delta}$. Hence 
$$ \lim_{\delta_0 \to 0}  | \omega_{i,\ell} ( b_0^*) - \omega_{i,\ell}(b)| =0.$$
  
Moreover, when $\delta_0$ goes to zero, $b$ and $b'$ get closer and closer to $a$, so when $ \delta_0$ goes to zero, $|b-a|$ goes to zero too.

Once we fix $p > 0$, we get

\begin{equation*}
\begin{split}
&\frac{1}{\eta_p'-\eta_p} \int_0^{\eta_p'-\eta_p} \omega_{i,n-k+p}(\mathrm{exp}_a(r \nabla^{\phi_p})(b_{p-1}^*))- \omega_{i,n-k+p}(a)dr\\
= & \ \frac{1}{\eta_p'-\eta_p} \int_0^{\eta_p'-\eta_p} \omega_{i,n-k+p}(\mathrm{exp}_a(r \nabla^{\phi_p})(b_{p-1}^*))- \omega_{i,n-k+p}(b_{p-1}^*)dr+ \\
& \ \sum_{i=2}^p ( \omega_{i,n-k+p}(b_{k-1}^*)- \omega_{i,n-k+p}(b_{k-2}^*)))+ \omega_{i,n-k+p}(b_{0}^*) -\omega_{i,n-k+p}(a) \\
\leq & \ \sup_{ r \in [0, \eta_p'-\eta_p]} |\omega_{i,n-k+p}(\mathrm{exp}_a(r \nabla^{\phi_p})(b_{p-1}^*))- \omega_{i,p}(b_{p-1}^*)| \\
+&  \sum_{i=2}^p |( \omega_{i,n-k+p}(b_{k-1}^*)- \omega_{i,n-k+p}(b_{k-2}^*)))|+ |\omega_{i,n-k+p}(b_0^*) - \omega_{i,n-k+p}(a) |,\\
\end{split}
\end{equation*}
which goes to zero as $\delta_0$ tends to zero, by what we have already proved and by the fact that if $\delta_0$ goes to zero, then $| \eta_j'-\eta_j|$ goes to zero for $j= 1, \dots, p$, hence 
$ | \omega_{i,n-k+p}( b_{k-1}^*)-\omega_{i,n-k+p}(b_{k-2}^*)| \leq \beta_{i,n-k+p}(|b_{k-1}^*-b_{k-2}^*|)$ goes to zero.
We finally reach the conclusion from the absolute continuity of $ \omega_{i,n-k+p}(\text{exp}_a(r \nabla^{\phi_1})(b_{p-1}^*))$ on $[0, \eta_p'-\eta_p]$.\\
 
\end{proof}
Since Claim 1 holds, we can rewrite (\ref{eq1}) as

$$\phi(b')- \phi(b_k^*)+ \sum_{j=1}^k \begin{bmatrix}
\omega_{1,j+(n-k)}(a) ( \eta_j'-\eta_j) + o ( | \eta_j'-\eta_j|)\\
\dots \\
\omega_{k,j+(n-k)}(a) ( \eta_j'-\eta_j) + o ( | \eta_j'-\eta_j|)
\end{bmatrix}
+ \sum_{j=k+1}^n \begin{bmatrix}
\omega_{1,j-k}(a) (v_j'-v_j)+ o ( |v_j'-v_j|)\\
\dots\\
 \omega_{k,j-k}(a) (v_j'-v_j)+ o ( |v_j'-v_j|) \\
\end{bmatrix}$$
$$
+ \sum_{j=k+1}^n \begin{bmatrix}
\omega_{1,j+(n-k)}(a) (w_j'-w_j) + o ( |w_j'-w_j|)\\
\dots\\
 \omega_{k,j+(n-k)}(a) (w_j'-w_j)+ o ( |w_j'-w_j|) \\
\end{bmatrix}=$$

$$= \phi(b')- \phi(b_k^*)+ 
\begin{bmatrix}
\omega_{1,1}(a) & \omega_{1,2}(a) & \dots & \omega_{1,2n-k}(a) \\
\omega_{2,1}(a) & \omega_{2,2}(a) & \dots & \omega_{2,2n-k}(a) \\
\dots\\
\dots \\
\dots\\
\omega_{k,1}(a) & \omega_{k,2}(a) & \dots & \omega_{k,2n-k}(a)
\end{bmatrix}
\begin{bmatrix}
v_{k+1}'-v_{k+1}\\
\dots\\
v_n'-v_n\\
\eta_1'-\eta_1\\
\dots \\
\eta_k'-\eta_k\\
w_{k+1}'-w_{k+1}\\
\dots\\
 w_n'-w_n
\end{bmatrix}
$$
$$+
\begin{pmatrix}
\sum_{j=k+1}^n o (|v_j'-v_j|)+ o (|w_j'-w_j|)+ \sum_{j=1}^k o( | \eta_j'-\eta_j|)\\
\dots\\
\sum_{j=k+1}^n o (|v_j'-v_j|)+ o (|w_j'-w_j|)+ \sum_{j=1}^k o( | \eta_j'-\eta_j|)\\
\end{pmatrix}. $$

$$= \phi(b')- \phi(b_k^*)+ 
\begin{bmatrix}
\omega_{1,1}(a) & \omega_{1,2}(a) & \dots & \omega_{1,2n-k}(a) \\
\omega_{2,1}(a) & \omega_{2,2}(a) & \dots & \omega_{2,2n-k}(a) \\
\dots\\
\dots \\
\dots\\
\omega_{k,1}(a) & \omega_{k,2}(a) & \dots & \omega_{k,2n-k}(a)
\end{bmatrix}
\begin{bmatrix}
v_{k+1}'-v_{k+1}\\
\dots\\
v_n'-v_n\\
\eta_1'-\eta_1\\
\dots \\
\eta_k'-\eta_k\\
w_{k+1}'-w_{k+1}\\
\dots\\
 w_n'-w_n
\end{bmatrix}
$$
$$+
\begin{pmatrix}
o (d_{\phi}(b,b'))\\
\dots\\
o( d_{\phi}(b,b'))\\
\end{pmatrix}. $$
as $\delta_0$ goes to zero, since $|v_j'-v_j| \leq d_{\phi}(b,b')$,  $|w_j'-w_j| \leq d_{\phi}(b,b')$, $|\eta_j'-\eta_j| \leq d_{\phi}(b,b')$.\\

The same argument yields that
\begin{equation}
\label{rofi}
\begin{split}
\phi(b')-\phi(b) \ \leq& \ \phi(b')- \phi(b_k^*)+ 
\begin{bmatrix}
\omega_{1,1}(a) & \omega_{1,2}(a) & \dots & \omega_{1,2n-k}(a) \\
\omega_{2,1}(a) & \omega_{2,2}(a) & \dots & \omega_{2,2n-k}(a) \\
\dots\\
\dots \\
\dots\\
\omega_{k,1}(a) & \omega_{k,2}(a) & \dots & \omega_{k,2n-k}(a)
\end{bmatrix}
\begin{bmatrix}
v_{k+1}'-v_{k+1}\\
\dots\\
v_n'-v_n\\
\eta_1'-\eta_1\\
\dots \\
\eta_k'-\eta_k\\
w_{k+1}'-w_{k+1}\\
\dots\\
 w_n'-w_n
\end{bmatrix}\\
+&\begin{pmatrix}
o (\rho_{\phi}(b,b'))\\
\dots\\
o( \rho_{\phi}(b,b'))\\
\end{pmatrix}. 
\end{split}
\end{equation}
In order to get the thesis, we are left to prove that
\begin{equation}
\label{cinque}
| \phi(b')-\phi(b_k^*) | = o ( d_{\phi}(b,b')) \text{ as }\delta_0 \to 0.
\end{equation}
To prove this, it is enough to show that
\begin{equation}
\label{quattro}
 | \phi(b')-\phi(b_k^*)| =o(\rho_{\phi}(b,b')) \text{ as }\delta_0 \to 0.
\end{equation}
In fact, if (\ref{quattro}) holds, we can apply $(\ref{rofi})$ to get that
\begin{equation}
 \lim_{\delta_0 \to 0} \sup_{b,b' \in I_{\delta_0}(a), b \neq b' } \left\{ \frac{| \phi(b')-\phi(b)-M(a) \pi( b^{-1} \cdot b')^t|}{\rho_{\phi}(b',b)} \right\}=0,
 \end{equation}
where $M(a)$ is the $k \times (2n-k)$ matrix $[M(a)]_{i,j}= \omega_{i,j}(a)$ for $i=1, \dots, k$, $j=1, \dots, 2n-k$.

Now, this implies (see for instance Proposition 3.17 in \cite{Arena}) that for every $b,b' \in I_{\delta_0}(a)$, there exists a constant $c>0$ such that 
\begin{equation}
\label{uno}
 | \phi(b)-\phi(b') | \leq c \ \rho_{\phi}(b,b').
\end{equation}

By Propositions \ref{PD} and \ref{2implica3}, this inequality implies that there exists a constant $c_2 >0$ such that for every $b,b' \in I_{\delta_0}(a)$, $\rho_{\phi}(b,b') \leq \ c_2 d_{\phi}(b,b')$, and therefore for every $b,b' \in I_{\delta_0}(a)$,
$$ 0 \leq \frac{1}{c_2} \frac{| \phi(b)- \phi(b_k^*)| }{d_{\phi}(b,b')}\leq  \frac{| \phi(b)- \phi(b_k^*)| }{\rho_{\phi}(b,b')}.$$
This means that if we prove (\ref{quattro}), (\ref{cinque}) will follow.\\

Let us start by adapting an argument from Theorem 5.7 in \cite{Articolo}:
\begin{equation*}
\begin{split}
 \frac{ | \phi(b') - \phi(b_k^*) | }{ \rho_{\phi}(b,b')} 
= & \ \frac{ | \phi(b') - \phi(b_k^*) | }{   |\tau'-\tau_k^*|^{1/2} } \frac{ |\tau'-\tau_k^*|^{1/2} }{ \rho_{\phi}(b,b')}\\
= & \ \frac{ | \phi(b') - \phi(b_k^*) | }{    | b'-b_k^*|^{1/2}} \frac{ |\tau'-\tau_k^*|^{1/2} }{ \rho_{\phi}(b,b')}\\
\leq & \ \upsilon_{\phi}(C\delta_0+D \delta_0^2) \frac{| \tau'-\tau_k^*|^{1/2}}{ \rho_{\phi}(b,b')}\\
\end{split}
\end{equation*}
where the function

\begin{equation}
\label{upsilon}
\upsilon_{\phi}(\delta):= \sup \left\{ \frac{|\phi(a')- \phi(a'') |}{|a'-a''|^{1/2}} \ | \ a' \neq a'', a', a'' \in I_{\delta}(a) \right\}
\end{equation}
goes to zero 
if $\delta\to 0$ by the second hypothesis, (\ref{hol}).

In order to achieve the proof of (\ref{quattro}), we need to show that $\frac{ | \tau'-\tau_k^* |^{1/2}}{ \rho_{\phi}(b,b')}$ is bounded close to $a$.

By (\ref{tauk}) and (\ref{defrofi}),
\begin{equation}
\label{eq2}
\begin{split}
 | \tau- \tau_k^*| \ 
 = & \  | \tau'-\tau - \sigma (v,w,v',w')- \sum_{j=1}^k \int_0^{\eta_j'- \eta_j} \phi_j(\mathrm{exp}_a(r \nabla^{\phi_j})(b_{j-1}^*))dr|\\
= & \ | \tau'-\tau - \sigma (v,w,v',w')- \sum_{j=1}^k \int_0^{\eta_j'- \eta_j} \phi_j(\mathrm{exp}_a(r \nabla^{\phi_j})(b_{j-1}^*))dr\\
& \ + \frac{1}{2} \sum_{j=1}^k ( \phi_j(b')+ \phi_j(b))(\eta_j'-\eta_j) - \frac{1}{2} \sum_{j=1}^k ( \phi_j(b')+ \phi_j(b))(\eta_j'-\eta_j)| \\
 \leq & \ \rho_{\phi}(b,b')^2+ | - \sum_{j=1}^k \int_0^{\eta_j'- \eta_j} \phi_j(\mathrm{exp}_a(r \nabla^{\phi_j})(b_{j-1}^*))dr\\
\hphantom{\leq} & \  +  \frac{1}{2} \sum_{j=1}^k ( \phi_j(b')+ \phi_j(b))(\eta_j'-\eta_j)|\\
= & \ \rho_{\phi}(b,b')^2+ | - \sum_{j=1}^k \int_0^{\eta_j'- \eta_j} \phi_j(\mathrm{exp}_a(r \nabla^{\phi_j})(b_{j-1}^*))dr\\
\hphantom{\leq} & \ + \frac{1}{2}( \sum_{j=1}^k ( \phi_j(b')- \phi_j(b_j^*)+ \phi_j(b_j^*)+ \phi_j(b_{j-1}^*)-\phi_j(b_{j-1}^*)+ \phi_j(b))(\eta_j'-\eta_j)| \\
 \leq & \  \rho_{\phi}(b,b')^2 \\
\hphantom{\leq} & \ +  | - \sum_{j=1}^k( \int_0^{\eta_j'- \eta_j} \phi_j(\mathrm{exp}_a(r \nabla^{\phi_j})(b_{j-1}^*))dr +\frac{1}{2} (\phi_j(b_j^*)+ \phi_j(b_{j-1}^*))( \eta_j'- \eta_j))| \\
\hphantom{\leq} & \ + |\frac{1}{2}( \sum_{j=1}^k ( \phi_j(b')- \phi_j(b_j^*)+ \phi_j(b)-\phi_j(b_{j-1}^*))(\eta_j'-\eta_j)|. \\
\end{split}
\end{equation}

For any $j$, by Claim 1 we have that, at least for $\delta_0$ small enough,
\begin{equation*}
\begin{split}
& \ |-  \int_0^{\eta_j'- \eta_j} \phi_j(\mathrm{exp}_a(r \nabla^{\phi_j})(b_{j-1}^*))dr+ \frac{1}{2} (\phi_j(b_j^*)+ \phi_j(b_{j-1}^*))( \eta_j'- \eta_j))|\\
= & \  |-  \int_0^{\eta_j'- \eta_j} \phi_j(\mathrm{exp}_a(r \nabla^{\phi_j})(b_{j-1}^*))- \phi_j(b_{j-1}^*)dr+ \frac{1}{2} (\phi_j(b_j^*)- \phi_j(b_{j-1}^*))( \eta_j'- \eta_j))|\\
=& \ |-  \int_0^{\eta_j'- \eta_j} \int_0^{r} \omega_{j,j+(n-k)}(\mathrm{exp}_a(s \nabla^{\phi_j})(b_{j-1}^*))ds \ dr+ \frac{1}{2} (\eta_j'-\eta_j) \int_0^{\eta_j'-\eta_j} \omega_{j,j+(n-k)}(\mathrm{exp}_a(r \nabla^{\phi_j})(b_{j-1}^*))dr|\\
 =  & \  O( | \eta_j'-\eta_j|^2) \\
 = & \  O ( \rho_{\phi}(b,b'))^2  
 \end{split}
\end{equation*}
Hence we can estimate the last line of (\ref{eq2}) from above by
$$ \rho_{\phi}(b,b')^2 + C \rho_{\phi}(b,b')^2  + |\frac{1}{2}( \sum_{j=1}^k ( \phi_j(b')- \phi_j(b_j^*) + \phi_j(b)-\phi_j(b_{j-1}^*))(\eta_j'-\eta_j)|. $$
We are left to estimate
\begin{equation}
\label{eq3}
\begin{split}
& \ |\frac{1}{2}( \sum_{j=1}^k ( \phi_j(b')- \phi_j(b_j^*) + \phi_j(b)-\phi_j(b_{j-1}^*))(\eta_j'-\eta_j)|\\
= & \  |\frac{1}{2} \sum_{j=1}^k \{( \phi_j(b')- \phi_j(b_k^*)) + \sum_{i=j}^{k-1}(\phi_j(b_{i+1}^*)- \phi_j( b_i^*))\\
\hphantom{=} & + ( \phi_j(b)-\phi_j(b_{0}^*))+ \sum_{i=0}^{j-2} ( \phi_j(b_i^*)- \phi_j(b_{i+1}^*)) \}(\eta_j'-\eta_j)|  \\
\leq & \  \frac{1}{2} \sum_{j=1}^k \{| \phi_j(b')- \phi_j(b_k^*)| + \sum_{i=j}^{k-1}|\phi_j(b_{i+1}^*)- \phi_j( b_i^*)|\\
\hphantom{=} & +  | \phi_j(b)-\phi_j(b_{0}^*)|+ \sum_{i=0}^{j-2} | \phi_j(b_i^*)- \phi_j(b_{i+1}^*)| \}(\eta_j'-\eta_j).\\
\end{split}
\end{equation}

Let us then estimate the different components of (\ref{eq3}).
\begin{itemize}
\item
First of all for a fixed $j$,
\begin{equation}
\label{eq4}
\begin{split}
& | \frac{1}{2} (\phi_j(b')- \phi_j(b_k^*))(\eta_j'-\eta_j)|\\
= & \  \frac{1}{2} \frac{|\phi_j(b')- \phi_j(b_k^*)|}{|\tau'-\tau_k^*|^{1/2}} | \tau'- \tau_k^*|^{1/2}|\eta_j'-\eta_j|\\
\leq & \  \frac{1}{2} \upsilon_{\phi_j}(C \delta_0+D \delta_0^2) |\tau'-\tau_k^*|^{1/2} | \eta_j'-\eta_j|.\\
\end{split}
\end{equation}
Of course the function $\upsilon_{\phi_j}(\delta)$ goes to 0 when $\delta$ goes to 0 again by the second hypothesis.

We can estimate the last line of (\ref{eq4}) from above by
\begin{equation}
\label{eq5}
 \frac{1}{4} (   \upsilon_{\phi_j}(\delta)^2|\tau'-\tau_k^*| + | \eta_j'-\eta_j|^2).
\end{equation}
If $b$ and $b'$ become sufficiently close, then also $b,b',a$ become sufficiently close, as well as $b,b_k^*$. In other words, for every $\varepsilon>0$, there exists
$ \delta_{\varepsilon,j} > 0$ such that if $\delta \in (0, \delta_{\varepsilon,j}]$ , $\upsilon_{\phi_j}(\delta)^2 \leq \varepsilon$, then, when $\delta_0 < \delta_{\varepsilon,j}$ is small enough, we can estimate (\ref{eq5}) from above by
\begin{equation*}
\begin{split}
 & \  \frac{1}{4} ( \varepsilon |\tau'-\tau_k^*| + | \eta_j'-\eta_j|^2)\\
  \leq & \ \frac{1}{4}\varepsilon |\tau'-\tau_k^*| + \mathrm{const}  \ (\rho_{\phi}(b,b'))^2. 
  \end{split}
\end{equation*}
For instance, we can fix $\varepsilon=2$, and if we take $\delta$ small enough, we can carry this contribute to the left hand side of (\ref{eq3}).\\

\item We can now consider for any fixed $j$
\begin{equation*}
\begin{split}
 & \ \frac{1}{2}|( \phi_j(b)-\phi_j(b_{0}^*))( \eta_j'- \eta_j)|\\
= & \ \frac{1}{2} | \eta_j'-\eta_j||\phi_j(b)- \phi_j(b_0^*)|\\
= & \ \frac{1}{2} | \eta_j'-\eta_j| \sum_{j={k+1}}^n(| v_j'-v_j|(\omega_{i,j}(a)+ o(1))+|w_j'-w_j|( \omega_{i,n+j}(a)+o(1)))\\
\leq & \  \frac{1}{2} c_2 | \eta_j'-\eta_j| | ( v'-v, w'-w) |\\
\leq & \ \frac{1}{2} c_2 | \eta'-\eta| | ( v'-v, w'-w) | \\
\leq & \ \frac{1}{4} c_2 | \eta'-\eta|^2+ \frac{1}{4}| ( v'-v, , w'-w) |^2 \\
 \leq & \ C_2 (\rho_{\phi}(b,b'))^2.\\
\end{split}
\end{equation*}
 
\item
Let us now fix $j \in \{k+1, \dots, n\}$ and $i \in \{0, \dots,j-1, j+1, \dots, k-1 \}$  and we want to estimate

\begin{equation*}
\begin{split}
 |\frac{1}{2}(\phi_j(b_{i+1}^*)-& \phi_j( b_i^*))( \eta_j'-\eta_j)|=
 \ \frac{1}{2} |\phi_j(\text{exp}_a( (\eta_{i+1}'- \eta_{i+1}) \nabla^{\phi_{i+1}})(b_i^*))- \phi_j( b_i^*)|| \eta_j'-\eta_j|\\
= & \  \frac{1}{2} | \int_0^{\eta_{i+1}'-\eta_{i+1}} \omega_{j,i+1+(n-k)}(\mathrm{exp}_a(r \nabla^{\phi_{i+1}})(b_i^*))dr| | \eta_j'-\eta_j|\\
\leq & \  \frac{1}{2} | \int_0^{\eta_{i+1}'-\eta_{n+1}} \omega_{j,i+1+(n-k)}(\mathrm{exp}_a(r \nabla^{\phi_{i+1}})(b_i^*))- \omega_{j,i+1+(n-k)}(b_i^*) dr| | \eta_j'-\eta_j| \\
& +|\omega_{j,i+1+(n-k)}(b_i^*)|| \eta_{i+1}'-\eta_{i+1}||\eta_j'-\eta_j| \\
\leq & \ \frac{1}{2} (\sup_{s \in [0,\eta_{i+1}'-\eta_{i+1}]} | \omega_{j,i+1+(n-k)}(\mathrm{exp}_a(r \nabla^{\phi_{i+1}})(b_i^*))- \omega_{j,i+1+(n-k)}(b_i^*) |+ |\omega_{j,i+1+(n-k)}(b_i^*)|)\\
\hphantom{\leq}& \ | \eta_{i+1}'-\eta_{i+1}||\eta_j'-\eta_j| \\
 \leq & \ \frac{1}{4} ( | \eta_{i+1}'- \eta_{i+1}|^2+ | \eta_j'-\eta_j|^2)\cdot \\
& \cdot (\sup_{s \in [0,\eta_{i+1}'-\eta_{i+1}]} | \omega_{j,i+1+(n-k)}(\mathrm{exp}_a(r \nabla^{\phi_{i+1}})(b_i^*))- \omega_{j,i+1+(n-k)}(b_i^*) |+ |\omega_{j,i+1+(n-k)}(b_i^*) |)\\
\leq & \ C_3 (\rho_{\phi}(b,b'))^2( o(1)+ \omega_{j,i+1+(n-k)}(b_i^*)) \ \ \ \ \text{ as }\delta_0 \to 0.\\
\end{split}
\end{equation*}
\end{itemize}

Combining the three estimates we obtained with equation (\ref{eq3}), we finally get (\ref{quattro}) and so the thesis.
\end{proof}

We are now ready to prove the first equivalence.

\begin{prop}
\label{P7}
Let $\Omega \subset \mathbb{R}^{2n+1-k}$ be an open set and let $\phi: \Omega  \to \mathbb{R}^k$ be a continuous function. Then the following statements are equivalent:
\begin{itemize}
\item [(i)] $\phi$ is uniformly intrinsic differentiable on $\Omega$;
\item[(ii)] there exists a family $\{ \phi_{\varepsilon} \}_{0 <\varepsilon< \varepsilon_0} \subset C^1(\Omega)$ and a continuous matrix valued function $M \in C^0(\Omega, M_{k,2n-k}(\mathbb{R}))$ such that for any open set $\Omega' \Subset \Omega$,
$$ \phi_{\varepsilon}\to \phi$$
$$ J^{\phi_{\varepsilon}}\phi_{\varepsilon} \to M$$
uniformly on $\Omega'$ as $\varepsilon$ goes to zero.
\end{itemize}
\end{prop}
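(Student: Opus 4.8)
The plan is to prove the two implications separately, treating $(i)\Rightarrow(ii)$ as essentially already available from Section 4 and concentrating the effort on $(ii)\Rightarrow(i)$, which will be assembled from the preparatory results together with Theorem \ref{T2}.

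For $(i)\Rightarrow(ii)$ I would invoke Proposition \ref{P9}: uniform intrinsic differentiability gives, for each $a\in\Omega$, a local family $\{\phi_\varepsilon\}\subset C^1(I_\delta(a))$ with $\phi_\varepsilon\to\phi$ and $J^{\phi_\varepsilon}\phi_\varepsilon\to J^\phi\phi$ uniformly on $I_\delta(a)$. The candidate limit matrix is $M:=J^\phi\phi$, which is continuous by Proposition \ref{P1}. The only remaining point is to pass from these local families to a single family defined on all of $\Omega$; this is done by the globalization already indicated right after Proposition \ref{P9} (the patching of Theorem 5.1 in \cite{Articolo}), using an exhaustion of $\Omega$ by compacts and a partition of unity, so that both convergences hold uniformly on every $\Omega'\Subset\Omega$.

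For $(ii)\Rightarrow(i)$ the strategy is to produce, at an arbitrary $a\in\Omega$, the two hypotheses of Theorem \ref{T2}: a family of exponential maps near $a$, and the little-H\"older estimate (\ref{hol}) on every $\Omega'\Subset\Omega$. The first is exactly the content of Proposition \ref{P6}, whose assumptions are precisely (ii): it yields, near each $a$, the maps $\mathrm{exp}_a(sW^\phi_\ell)(b)$ together with continuous functions $\omega_{i,\ell}=m_{i,\ell}$, the entries of $M$, satisfying the integral relation (\ref{catena}). For the second I would \emph{not} try to verify (\ref{condizionecurve}) along every integral curve of the continuous fields $\nabla^{\phi_\ell}$ (which are non-unique), but instead apply Proposition \ref{P3} directly to each approximant $\phi_\varepsilon$, which is genuinely $C^1$: on rectangles $I'\Subset I''\Subset\Omega$ it gives a H\"older bound with modulus $\alpha_\varepsilon$ depending only on $k$, on $\{\|\phi_{\varepsilon,j}\|_{L^\infty(I'')}\}$, on $\|J^{\phi_\varepsilon}\phi_\varepsilon\|_{L^\infty(I'')}$ and on the moduli of continuity of the central entries $\{\omega^{(\varepsilon)}_{j,j+(n-k)}\}$ of $J^{\phi_\varepsilon}\phi_\varepsilon$.

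The key technical point, and the step I expect to be the main obstacle, is to render $\alpha_\varepsilon$ uniform in $\varepsilon$ so that it survives the passage to the limit. The $L^\infty$-data are uniformly bounded because $\phi_\varepsilon\to\phi$ and $J^{\phi_\varepsilon}\phi_\varepsilon\to M$ uniformly on $\overline{I''}$; for the moduli of continuity I would use that a sequence converging uniformly to a uniformly continuous limit on a compact set has modulus of continuity bounded by $\omega_M(r)+2\|J^{\phi_\varepsilon}\phi_\varepsilon-M\|_{L^\infty(I'')}$, where $\omega_M$ is the modulus of continuity of the central entries of $M$. Consequently $\sup_\varepsilon\alpha_\varepsilon(r)\to0$ as $r\to0$. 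Since $\phi_\varepsilon\to\phi$ uniformly, for each fixed pair the quotient $|\phi(a')-\phi(b')|\,|a'-b'|^{-1/2}$ is the limit of the corresponding quotients for $\phi_\varepsilon$, hence is bounded by $\sup_\varepsilon\alpha_\varepsilon(r)$ whenever $0<|a'-b'|\le r$; the compactness covering argument in the proof of Proposition \ref{P5} then upgrades this rectangle estimate to all of $\Omega'\Subset\Omega$, yielding (\ref{hol}). With both hypotheses in hand, Theorem \ref{T2} shows $\phi$ is uniformly intrinsic differentiable at $a$ and that $[J^\phi\phi]_{i,\ell}(a)=\frac{d}{ds}\phi_i(\mathrm{exp}_a(sW^\phi_\ell)(a))|_{s=0}=\omega_{i,\ell}(a)=M_{i,\ell}(a)$; since $a$ is arbitrary this gives (i) and, as noted after Theorem \ref{teorema1}, identifies $M=J^\phi\phi$.
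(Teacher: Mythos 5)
Your proposal is correct and follows essentially the same route as the paper, whose proof of this proposition consists precisely in assembling Propositions \ref{P9}, \ref{P6} and \ref{P3} with Theorem \ref{T2} along the lines of Theorem 5.1 in \cite{Articolo}. The only point to tidy is that the surrogate modulus $\omega_M(r)+2\Vert J^{\phi_{\varepsilon}}\phi_{\varepsilon}-M\Vert_{L^{\infty}(I'')}$ does not vanish as $r\to 0$ for fixed $\varepsilon$, so one should conclude via $\limsup_{\varepsilon\to 0}\alpha_{\varepsilon}(r)\to 0$ as $r\to 0$ (which suffices, since the quotient for $\phi$ is the limit of the quotients for $\phi_{\varepsilon}$) rather than via $\sup_{\varepsilon}\alpha_{\varepsilon}(r)$.
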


\begin{proof}
Since Propositions \ref{P9}, \ref{P6} and \ref{P3} and Theorem \ref{T2} hold, the proof is identical to the one of Theorem 5.1 in \cite{Articolo}.
\end{proof}

\begin{teo}
\label{T6}
Let $\Omega \subset \mathbb{R}^{2n+1-k}$ be an open set and let $\phi: \Omega \to \mathbb{R}^k$ be a function. We define $S:= \mathrm{graph}(\phi)$. Then the following are equivalent:
\begin{itemize}
\item[(i)] $\phi$ is uniformly intrinsic differentiable on $\Omega$;
\item[(ii)] $\phi \in C^0( \Omega)$ and for every $a \in \Omega$ there exist $\partial^{\phi_j} \phi(a)$ for $j=1, \dots 2n-k$, the functions
$$ \partial^{\phi_j} \phi : \Omega \to \mathbb{R}^k,$$
are continuous, and $ \forall \  \Omega' \Subset \Omega$,
\begin{equation}
\label{equnmezzo}
\lim_{r \to 0+} \sup \left\{ \frac{ | \phi(b)- \phi(b') | }{ | b-b' |^{1/2}}  \ : \  b,b' \in \Omega', \ 0 < |b'-b| \leq r \right\}=0
\end{equation}
\item[(iii)] $\phi$ is intrinsic differentiable on $\Omega$, the map $J^{\phi} \phi : \Omega \to M_{k,2n-k}(\mathbb{R})$ is continuous and $ \forall \ \Omega' \Subset \Omega$
$$ \lim_{r \to 0+} \sup \left\{ \frac{ | \phi(b)- \phi(b') | }{ | b-b'|^{1/2}}  \ : \  b,b' \in \Omega', \ 0 < |b'-b| \leq r \right\}=0.$$ 
\item[(iv)] there are $U$ open in $\mathbb{H}^n$ and $f  \in C^1_{\mathbb{H}}(U;R^k)$ such that $S = \{p  \in U : f(p) = 0 \}$, $  \det([X_if_j]_{i,j=1, \dots,k}(p)) \neq 0,$ for all $ p \in \mathcal{S}$.
\end{itemize}
\end{teo}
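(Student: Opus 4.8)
The plan is to recognize that this theorem is essentially an assembly of the results established earlier in this section, organized around a single cycle of implications. First I would dispose of the equivalence (i) $\Leftrightarrow$ (iv), which is nothing but Theorem \ref{teo1} read through the identification (\ref{Remcorresp}) of $\phi$ with $\tilde\phi$ and the equivalence of the two formulations of uniform intrinsic differentiability (Definitions \ref{DEFUID1} and \ref{DEFUID2}): condition (iv) is verbatim item 1 of Theorem \ref{teo1}, and (i) is item 2. It then remains to close the chain (i) $\Rightarrow$ (iii) $\Rightarrow$ (ii) $\Rightarrow$ (i).

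For (i) $\Rightarrow$ (iii): uniform intrinsic differentiability trivially implies intrinsic differentiability, and Proposition \ref{P1} gives the continuity of $J^\phi\phi$. The only point to check is the little-H\"older estimate (\ref{equnmezzo}). Since $\phi$ is intrinsic differentiable at every point, Proposition \ref{P2} applied along any integral curve $\gamma^j$ of $W^\phi_j$ (re-centering at each of its points) shows $\frac{d}{ds}\phi_i(\gamma^j(s)) = [J^\phi\phi(\gamma^j(s))]_{ij}$; setting $\omega_{i,j} := [J^\phi\phi]_{ij}$, which is continuous by Proposition \ref{P1}, puts us exactly in the hypotheses of Proposition \ref{P5}, whence (\ref{equnmezzo}) on every $\Omega' \Subset \Omega$. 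For (iii) $\Rightarrow$ (ii): intrinsic differentiability at each $a$ gives, via Corollary \ref{corollarioequiv}, the existence of $\partial^{\phi_j}\phi(a)$ together with the identity $\partial^{\phi_j}\phi(a) = ([J^\phi\phi]_{1,j}, \dots, [J^\phi\phi]_{k,j})^T(a)$; continuity of $J^\phi\phi$ then transfers to continuity of each $\partial^{\phi_j}\phi$, and the little-H\"older estimate is carried over unchanged. This step is immediate.

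The heart of the argument, and the step I expect to be the main obstacle, is (ii) $\Rightarrow$ (i). Here I would fix $a \in \Omega$ and manufacture a family of exponential maps near $a$ in the sense of Definition \ref{D1}, so as to invoke Theorem \ref{T2}. Since the coefficients of each $W^\phi_j$ are continuous, Peano's theorem produces, for every $b$ in a small cube $\overline{I_{\delta_2}(a)}$, an integral curve $\gamma_b^j$ of $W^\phi_j$ through $b$ staying in $\overline{I_{\delta_1}(a)} \Subset \Omega$ on a uniform time interval (the non-central $W^\phi_j$ having unique curves, the central ones possibly not). The delicate point is the validity of the chain rule (\ref{catena}) with a continuous right-hand side despite this non-uniqueness: for any such curve and any $s$, the shifted curve $t \mapsto \gamma_b^j(s+t)$ is again an integral curve of $W^\phi_j$, so the hypothesis of (ii) --- that $\partial^{\phi_j}\phi$ exists and is independent of the chosen integral curve --- yields $\frac{d}{ds}\phi_i(\gamma_b^j(s)) = \partial^{\phi_j}\phi_i(\gamma_b^j(s))$ at every $s$; integrating and using the continuity of $\partial^{\phi_j}\phi_i$ gives (\ref{catena}) with $\omega_{i,j} = \partial^{\phi_j}\phi_i$. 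This exhibits the required family of exponential maps, and, since the little-H\"older hypothesis (\ref{equnmezzo}) is assumed in (ii), Theorem \ref{T2} applies at $a$ and certifies that $\phi$ is uniformly intrinsic differentiable there; as $a$ is arbitrary, (i) follows. The subtlety to watch is precisely that the curve-independence built into the definition of $\partial^{\phi_j}\phi$ is what rescues the construction where $W^\phi_j$ fails to be Lipschitz.
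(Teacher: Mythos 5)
Your proposal is correct and follows essentially the same route as the paper: (i)$\Leftrightarrow$(iv) via Theorem \ref{teo1}, (iii)$\Rightarrow$(ii) via Proposition \ref{P2} and Corollary \ref{corollarioequiv}, and the key step (ii)$\Rightarrow$(i) by choosing, for each starting point, an arbitrary integral curve on which the curve-independence built into $\partial^{\phi_j}\phi$ validates the chain rule (\ref{catena}), thereby producing a family of exponential maps to which Theorem \ref{T2} applies. The only minor variation is in (i)$\Rightarrow$(iii), where you obtain the little-H\"older estimate from Propositions \ref{P2} and \ref{P5} rather than from Propositions \ref{P3} and \ref{P7} as cited here by the paper; that is precisely the argument the paper itself uses for Theorem \ref{T7}, so both are sound.
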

\begin{proof}
$(i) \Leftrightarrow (iv)$ Is exactly the content of Theorem \ref{teo1}.

$(i) \Rightarrow (iii)$ Follows from Propositions \ref{P1}, \ref{P3} and \ref{P7}.

$(iii) \Rightarrow (ii)$ The map $\phi$ is continuous since it is intrinsic differentiable (see \cite{Diffandapprox}, Proposition 3.2.3).
The condition on all the curves follows by Proposition \ref{P2}.
In particular, the fact that $\phi$ is intrinsic differentiable at any $a \in \Omega$ implies that there exists the derivative of $\phi$ evaluated on every curve of $W^{\phi}_j$ with initial point $a$ (more precisely, the derivatives of the component $\phi_i$ along the integral curve of $W^{\phi}_j$). This derivative is the same for every curve and it equals the $j$-th column of the matrix of the intrinsic Jacobian matrix $J^{\phi} \phi (a)$ (see Proposition \ref{P2} and Corollary \ref{corollarioequiv}). Its continuity follows from the fact that we assumed $ J^{\phi} \phi $ to be continuous with respect to $a$.

$(ii)\Rightarrow (i)$
Is a very simple adaptation of the proof of Theorem \ref{T2}. Basically for any point $b$ in any neighbourhood $I_{\delta}(a) \Subset \Omega$ of $a \in \Omega$, there exists at least one integral curve of the vector field $W^{\phi}_j$, $j \in \{1, \dots, 2n-k \}$ starting at $b$ on which we can use the chain rule (\ref{catena}). In fact, since we can do this on all the curves by hypothesis, we can choose, for every starting point $b$, an arbitrary curve that will play the role of the exp$_a$( $\cdot W^{\phi}_j$)(b) ($j=1, \dots, 2n-k$) and use it in order to apply Theorem $\ref{T2}$. In particular, since $\Omega$ is open, once we fix $a \in \Omega$, we can find $\delta_1>0$ such that $I_{\delta_1}(a) \Subset \Omega$. Hence one can choose $0 <\delta_3 < \frac{1}{2} \delta_1$ such that for every point $b \in I_{\delta_2}(a)$, the integral curves starting at $b \in I_{\delta_3}(a)$ exist on a common interval of time $[-\delta_2, \delta_2]$ for $\delta_2>0$ appropriately small (how much small will depends on $\delta_3$, surely $\delta_2 \leq \delta_3$). Hence, any integral curve starting at $b \in I_{\delta_2}(a)$ exists at least for an interval of time $[-\delta_2, \delta_2]$.
\end{proof}

\begin{Remark}
The equivalence $(iv) \Leftrightarrow ((ii)+(iii))$ has already been proved by A.Kozhevnikov in his Phd thesis (see \cite{Artem}, Theorem 4.3.1), in the more general context of low codimensional regular surfaces in a generic Carnot group. We have inserted here our proof which is more direct. In this work, we also prove that $(ii)$ and $(iii)$ are independentely equivalent for surfaces in $\mathbb{H}^n$. Moreover, taking into account Di Donato's results, we manage to explicitely relate results in \cite{Artem}, to the notions of intrinsic differentiability and uniform intrinsic differentiability, on which many authors have worked (as examples \cite{Articolo, Citti, Diffandapprox, Rectandper, DiffofIntr}).
\end{Remark}

Moreover, by applying Proposition \ref{P5}, we obtain a stronger result.
\begin{teo}
\label{T7}
Let $\Omega \subset \mathbb{R}^{2n+1-k}$ be an open set and let $\phi: \Omega  \to \mathbb{R}^k$ be a function. We define $\mathcal{S}:= \mathrm{graph}(\phi)$. Then the following are equivalent:
\begin{itemize}
\item[(i)] $\phi$ is uniformly intrinsic differentiable on $\Omega$;
\item[(ii)] $\phi \in C^0( \Omega)$, for every $a \in \Omega$ there exist $\partial^{\phi_j} \phi(a)$ for $j=1, \dots 2n-k$ and the functions
$$ \partial^{\phi_j} \phi : \Omega \to \mathbb{R}^k,$$
are continuous.
\item[(iii)] $\phi$ is intrinsic differentiable on $\Omega$ and the map $J^{\phi} \phi : \Omega \to M_{k,2n-k}(\mathbb{R})$ is continuous.
\item[(iv)] There are $U \subseteq \mathbb{H}^n$ open and $f = (f_1,...,f_k)  \in C^1_{\mathbb{H}}(U;R^k)$ such that $\mathcal{S} = \{p  \in U : f(p) = 0 \},\  \det([X_if_j]_{i,j=1, \dots, k}(p)) \neq 0,$ for all $ p \in \mathcal{S}$.
\end{itemize}
\end{teo}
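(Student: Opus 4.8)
The strategy is to reduce everything to Theorem~\ref{T6}, of which the present statement is an apparent strengthening: conditions (ii) and (iii) here are precisely conditions (ii) and (iii) of Theorem~\ref{T6} with the $\tfrac12$-H\"older control (\ref{equnmezzo}) suppressed. The whole point is that, once the remaining requirements in (ii) or (iii) are in force, (\ref{equnmezzo}) ceases to be an independent hypothesis and becomes a consequence, via Proposition~\ref{P5}. Since (i) and (iv) coincide verbatim with the corresponding items of Theorem~\ref{T6}, the equivalence $(i)\Leftrightarrow(iv)$ is again Theorem~\ref{teo1}, and it only remains to connect (ii) and (iii) with (i).

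For the two implications that merely weaken the target, nothing new is needed. The implication $(i)\Rightarrow(iii)$ follows as in Theorem~\ref{T6}: uniform intrinsic differentiability trivially forces intrinsic differentiability, while the continuity of $J^{\phi}\phi$ is Proposition~\ref{P1}; dropping (\ref{equnmezzo}) from the conclusion only makes the statement easier. For $(iii)\Rightarrow(ii)$, intrinsic differentiability guarantees that $\phi$ is continuous, and Proposition~\ref{P2} together with Corollary~\ref{corollarioequiv} show that for every $a\in\Omega$ and every $j$ the derivative $\partial^{\phi_j}\phi(a)$ exists and equals the $j$-th column of $J^{\phi}\phi(a)$; continuity of $a\mapsto\partial^{\phi_j}\phi(a)$ is then inherited from the assumed continuity of $J^{\phi}\phi$. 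Neither argument invokes (\ref{equnmezzo}).

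The genuinely new implication is $(ii)\Rightarrow(i)$, where the H\"older control must be produced from scratch. Here I would set $\omega_{i,j}:=\partial^{\phi_j}\phi_i$ for $i=1,\dots,k$ and $j=1,\dots,2n-k$; by hypothesis these are continuous on $\Omega$. The key observation is that (ii) furnishes exactly the chain-rule identity demanded in Proposition~\ref{P5}: if $\gamma^j$ is any integral curve of $W^{\phi}_j$ and $t_0$ lies in its domain, then, since $W^{\phi}_j$ is autonomous, $s\mapsto\gamma^j(t_0+s)$ is again an integral curve of $W^{\phi}_j$ starting at $\gamma^j(t_0)$; the curve-independent definition of the $\partial^{\phi_j}$-derivative therefore yields $\tfrac{d}{dt}\phi_i(\gamma^j(t))\big|_{t=t_0}=\omega_{i,j}(\gamma^j(t_0))$. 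Thus $t\mapsto\phi_i(\gamma^j(t))$ has at every point a derivative equal to the continuous function $t\mapsto\omega_{i,j}(\gamma^j(t))$, hence is of class $C^1$ and satisfies (\ref{condizionecurve}). Proposition~\ref{P5} then applies on any $\Omega'\Subset\Omega$ and delivers precisely the estimate (\ref{equnmezzo}). At this stage all three parts of hypothesis (ii) of Theorem~\ref{T6} are verified, and the implication $(ii)\Rightarrow(i)$ of that theorem closes the argument.

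The step I expect to be the main obstacle is the verification that (ii) matches the hypotheses of Proposition~\ref{P5} along \emph{every} integral curve, and not merely at the base point $a$: this is exactly what the curve-independence built into the definition of $\partial^{\phi_j}\phi$ provides, but it must be used carefully, together with the choice of nested open sets $\Omega'\Subset\Omega''\Subset\Omega$ ensuring that the curves involved remain inside $\Omega''$. Once this bookkeeping is settled, the remainder is a direct appeal to the already established Proposition~\ref{P5} and Theorem~\ref{T6}.
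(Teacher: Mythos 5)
Your proposal is correct and follows essentially the same route as the paper: the paper also reduces Theorem \ref{T7} to Theorem \ref{T6} (equivalently to Theorem \ref{T2}) by observing that conditions (ii)/(iii) force $\phi$ to be differentiable along every integral curve of $W^{\phi}_j$ with continuous derivative $\omega_{i,j}=[J^{\phi}\phi]_{i,j}=\partial^{\phi_j}\phi_i$, so that Proposition \ref{P5} supplies the $\tfrac12$-little-H\"older control (\ref{equnmezzo}) that was an explicit hypothesis in Theorem \ref{T6}. Your shifting argument $s\mapsto\gamma^j(t_0+s)$ to verify the hypothesis of Proposition \ref{P5} along the whole curve is exactly the point the paper leaves implicit, and the remaining implications are handled identically via Theorem \ref{teo1}, Proposition \ref{P1}, Proposition \ref{P2} and Corollary \ref{corollarioequiv}.
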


\begin{proof}

The proof is analogous to the one of Theorem \ref{T6}: by taking into account Proposition \ref{P5} we can simplify the hypothesis. In particular, if $\phi$ is uniformly intrinsic differentiable, $\phi$ is intrinsic differentiable and its intrinsic Jacobian matrix is a continuous function. Hence,  $\phi$ is differentiable on every integral curve $\gamma^j$ of $W^{\phi}_j$, and for every $i=1, \dots, k$, $j=1, \dots, 2n-k$, $\frac{d}{dt} \phi_i( \gamma^j(t))$ equals $ [J^{\phi} \phi]_{ij}(\gamma^{j}(t))$, which is continuous; hence $\phi \circ \gamma^j$ is $C^1$, then Proposition \ref{P5} tells us that $\phi$ satisfies the condition of $\frac{1}{2}$-little-Holder continuity in (\ref{equnmezzo}) so we can finally conclude by applying Theorem \ref{T2}. 
\end{proof}

\section{Area formula}

It is possible to compute the area of $\mathbb{H}$-regular surfaces of codimension $1 \leq k \leq n$ in $\mathbb{H}^n$ in terms of intrinsic derivatives of their parametrizations.

 We fix a setting that is not restrictive, in fact there is no loss of generality by Theorem \ref{teo1}, Remark \ref{coordinates2} and Remark \ref{identification_coordinates}.
Let us consider again $$\mathbb{M}:= \mathrm{exp}(\mathrm{span} (  X_{k+1}, \dots, X_n, Y_1, \dots, Y_n, T )) \ \ \ \ \ \mathbb{H}:= \mathrm{exp}(\mathrm{span}(X_1, \dots, X_k )).$$

Consider any $\mathbb{H}$-regular surface of codimension $k$, $1 \leq k \leq n$; by Theorem \ref{TI}, it can be locally parametrized by a unique uniformly intrinsic differentiable function $$\phi: \Omega \subset  \mathbb{R}^{2n+1-k} \to  \mathbb{R}^k$$ (see also Remark \ref{coordinates2}).

We can then focus on computing the area of the intrinsic graph of $\phi$  
$$ \mathcal{S}= \mathrm{graph}( \phi)= \{  \ m \cdot \phi(m) \ | \ m \in \Omega \}= \{ \Phi(m) \ | \ m \in \Omega \}.$$
In fact, if we are able to do this, by an elementary covering argument we will be able to compute the area of any low codimensional $\mathbb{H}$-regular surface. 

According to Theorem \ref{teo1}, we know that there exist an open set $U$ of $\mathbb{H}^{n}$, with $\Phi(\Omega) \subset U$, and a function $f \in C^1_{\mathbb{H}}(U, \mathbb{R}^k)$ such that $S = \{p \in U : f(p) = 0 \}$ and such that $\det (([X_if_j]_{i,j=1, \dots, k}(p) )\neq 0$ for all $ p \in \mathcal{S}$. Let us introduce $\Delta(p):=| \det([X_j f_i]_{i,j=1, \dots, k}(p))| >0$ for all point $p$ in $\mathcal{S}$.
Moreover, from the proof of Theorem \ref{teo1} we can choose $f$ such that 

\begin{equation}
\label{eq13}
f \circ \Phi=0 \ \ \ \text{on } \Omega \ \ \ \ \ \text{ and }\ \ \ \ 
 J_{\mathbb{H}}f (\Phi(m))= \begin{pmatrix}
 \ \mathbb{I}_{k}  & | & - J^{\phi}\phi \ \\
\end{pmatrix}(m) \  \ \ \forall m \in \Omega
\end{equation}
(for more details see also the proof of \cite{DiDonatoArt}, Theorem 4.1).
Hence, by the choice of $f$ in (\ref{eq13}), and by results in Theorem \ref{T7}, it turns out that the horizontal Jacobian matrix of $f$ for every $m \in \Omega$ is given by
\begin{equation}
\label{eq12}
 J_{\mathbb{H}}f (\Phi(m))= \begin{pmatrix}
 1 & \dots & 0 &  -\partial^{\phi_1}\phi_1 & \dots & -\partial^{\phi_{2n-k}}\phi_1 \\
 \dots & \dots & \dots &  \dots  & \dots & \dots  \\
 0 & \dots & 1 &  -\partial^{\phi_1}\phi_k & \dots & -\partial^{\phi_{2n-k}}\phi_k\\
\end{pmatrix}(m).
\end{equation}

From the form of the matrix it is clear that $\Delta(\Phi(m)) =1$ for every $m \in \Omega$. \\
 
We recall a result proved combining Theorem 4.1 from \cite{Areaformula} and results in \cite{Franchi2015} (for the precise statement see Theorem 4.50 in \cite{Notesserra}).

Let $\Omega \subset \mathbb{R}^{2n+1-k}$ be an open set and let $\phi: \Omega  \to \mathbb{R}^k$ be a uniformly intrinsic differentiable function and consider its intrinsic graph $\mathcal{S} =  \{ m \cdot \phi(m) \ | \ m \in \Omega \}$. Let us consider $U \subset \mathbb{H}^n$ an open set and $f \in C^1_{\mathbb{H}}(U, \mathbb{R}^k)$ such that $\mathcal{S}= \{ p \in U \ | \ f(p)=0 \}$ and $\det ( [ X_i f_j]_{i,j=1, \dots, k})(p) \neq 0$ for every $p \in \mathcal{S}$.
Then, the $(2n+2-k)$-centered Hausdorff measure of the graph can be computed as
\begin{equation}
\label{eq14}
 C^{2n+2-k}_{\infty} \llcorner \mathcal{S} = \Phi_{\sharp} \left( \frac{ |\nabla_{\mathbb{H}} f_1 \wedge \nabla_{\mathbb{H}}f_2 \wedge \dots \wedge \nabla_{\mathbb{H}}f_k|}{\Delta} \circ \Phi \right)\mathcal{H}^{2n+1-k}_{e} \llcorner \mathbb{R}^{2n+1-k},
\end{equation}
where $ \Phi: \Omega \to \mathbb{H}^n$ is the usual graph map.
Hence, combining (\ref{eq13}) with (\ref{eq14}), it is not difficult to convince ourself of the validity of the following result.

\begin{teo}
\label{T8}
Let $\Omega \subset \mathbb{R}^{2n+1-k}$ be an open set and let $\phi: \Omega  \to \mathbb{R}^k$ be a uniformly intrinsic differentiable function on $\Omega$. If we call $\mathcal{S}:= \mathrm{graph}(\phi)$, then for every Borel set $\mathcal{O} \subset \mathbb{H}^n$, 
\begin{equation}
\label{eq28}
 C_{\infty}^{2n+2-k} (\mathcal{S} \cap \mathcal{O})
=\int_{\Omega \cap \Phi^{-1}( \mathcal{O})} \sqrt{ 1 + \sum_{\ell=1}^k\sum_{I \in \mathcal{I}_{\ell}}  A_I(p)^2 } \ d \mathcal{H}_e^{2n+1-k}(p)
\end{equation}
where 
$$ \mathcal{I}_{\ell}:= \{ (i_1, \dots, i_{\ell},j_1, \dots, j_{\ell})) \in \mathbb{N}^{2l} \ | \ 1 \leq i_1 < i_2 < \dots < i_{\ell} \leq 2n-k, \ 1 \leq  j_1 < j_2 \dots < j_{\ell} \leq k \} $$
and
\begin{equation*}
\begin{split}
 A_I(p) =&  \mathrm{det}\begin{pmatrix} 
[J^{\phi} \phi]_{j_1,i_1} & \dots & [J^{\phi} \phi]_{j_1,i_{\ell}}\\
\dots & \dots & \dots \\
[J^{\phi} \phi]_{j_{\ell},i_1} & \dots & [J^{\phi} \phi]_{j_{\ell},i_{\ell}}\\
 \end{pmatrix} (p) \\
 = & \mathrm{det} \begin{pmatrix} 
\partial^{\phi_{i_1}} \phi_{j_1} & \dots & \partial^{\phi_{i_{\ell}}} \phi_{j_1} \\
\dots & \dots & \dots \\
\partial^{\phi_{i_1}} \phi_{j_{\ell}} & \dots & \partial^{\phi_{i_{\ell}}} \phi_{j_{\ell}} \\
\end{pmatrix}(p)
\end{split}
\end{equation*}
\end{teo}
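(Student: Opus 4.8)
The plan is to reduce the statement to the abstract area formula (\ref{eq14}), which already expresses $C^{2n+2-k}_{\infty}\llcorner \mathcal{S}$ as the pushforward under $\Phi$ of the density $|\nabla_{\mathbb{H}}f_1 \wedge \dots \wedge \nabla_{\mathbb{H}}f_k|/\Delta$ composed with $\Phi$, integrated against the Euclidean Hausdorff measure on $\mathbb{R}^{2n+1-k}$. Since we have already chosen $f$ so that its horizontal Jacobian takes the normalized form (\ref{eq13}) along the graph and, consequently, $\Delta(\Phi(m)) = 1$ for every $m \in \Omega$, the only quantity that remains to be computed explicitly is the norm of the horizontal wedge product evaluated along $\mathcal{S}$.

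First I would identify $|\nabla_{\mathbb{H}}f_1 \wedge \dots \wedge \nabla_{\mathbb{H}}f_k|$ with the square root of the sum of the squares of all maximal ($k \times k$) minors of the matrix whose rows are the horizontal gradients $\nabla_{\mathbb{H}}f_j \in \mathbb{R}^{2n}$, equivalently with $\sqrt{\det\big(J_{\mathbb{H}}f\,(J_{\mathbb{H}}f)^T\big)}$. Evaluating along the graph and inserting (\ref{eq13}), so that $J_{\mathbb{H}}f(\Phi(m)) = (\,\mathbb{I}_k \mid -J^{\phi}\phi\,)(m)$, the associated Gram matrix collapses to $\mathbb{I}_k + (J^{\phi}\phi)(J^{\phi}\phi)^T(m)$, the sign of $-J^{\phi}\phi$ disappearing upon multiplication by its transpose.

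The core algebraic step is then to expand the maximal minors of the normalized matrix $(\,\mathbb{I}_k \mid -J^{\phi}\phi\,)$. A $k \times k$ minor is selected by choosing $k$ of the $2n$ columns; write $T \subseteq \{1, \dots, k\}$ for the chosen identity columns and $S \subseteq \{1, \dots, 2n-k\}$ for the chosen columns of $-J^{\phi}\phi$, so that $|T| + |S| = k$. Laplace expansion along the identity columns shows that the minor equals, up to sign, the $\ell \times \ell$ minor of $J^{\phi}\phi$ with $\ell := |S|$, row set $R = \{1, \dots, k\} \setminus T$ and column set $S$; conversely every such minor of $J^{\phi}\phi$ arises exactly once. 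Squaring annihilates the signs, the pairs $(R,S)$ with $|R| = |S| = \ell$ are precisely enumerated by $\mathcal{I}_{\ell}$, and the case $\ell = 0$ contributes the single minor $\det \mathbb{I}_k = 1$. Hence
$$|\nabla_{\mathbb{H}}f_1 \wedge \dots \wedge \nabla_{\mathbb{H}}f_k|(\Phi(m)) = \sqrt{\,1 + \sum_{\ell=1}^k \sum_{I \in \mathcal{I}_{\ell}} A_I(m)^2\,}.$$

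Finally I would substitute this density, together with $\Delta \circ \Phi \equiv 1$, into (\ref{eq14}) and evaluate both measures on $\mathcal{S} \cap \mathcal{O}$: the pushforward under $\Phi$ turns the left-hand side into an integral over $\Omega \cap \Phi^{-1}(\mathcal{O})$ against $\mathcal{H}^{2n+1-k}_e$, yielding (\ref{eq28}). The second expression for $A_I$ in terms of $\partial^{\phi_i}\phi_j$ follows immediately from Corollary \ref{corollarioequiv}, which identifies the entries of $J^{\phi}\phi$ with the $\partial^{\phi_i}\phi_j$. The only genuinely delicate point is the bookkeeping in the minor expansion of the third paragraph — checking that $\mathcal{I}_{\ell}$ enumerates exactly the surviving minors and that no sign survives the squaring — whereas the measure-theoretic passage is a direct consequence of (\ref{eq14}).
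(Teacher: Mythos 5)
Your proposal is correct and follows essentially the same route as the paper: both reduce to the abstract formula (\ref{eq14}) with the normalized choice of $f$ from (\ref{eq13}) (so that $\Delta\circ\Phi\equiv 1$), and both identify the square of the norm of $\nabla_{\mathbb{H}}f_1\wedge\dots\wedge\nabla_{\mathbb{H}}f_k$ with the sum of squared $k\times k$ minors of $(\,\mathbb{I}_k\mid -J^{\phi}\phi\,)$, indexed by $\mathcal{I}_{\ell}$ after the Laplace reduction to minors of $J^{\phi}\phi$. Your write-up is in fact somewhat more explicit than the paper's on the combinatorial bookkeeping (the Gram-matrix/Cauchy--Binet identification and the sign cancellation after squaring), but the argument is the same.
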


\begin{proof}
We know by Theorem \ref{T7} that, since $\phi$ is a uniformly intrinsic differentiable function, $J^{\phi} \phi$ is a continuous matrix-valued function on $\Omega$, hence it makes sense to integrate its components that coincide with the elements $ [J^{\phi} \phi]_{ij}= \partial^{\phi_j} \phi_i$. By Theorem \ref{teo1} we know that, given the uniformly intrinsic differentiable function $\phi$, its intrinsic graph $\mathcal{S}$ is the zero-level set of a function $f \in C^1_{\mathbb{H}}$ such that $\det([X_if_j](p)_{i,j=1, \dots, k}) \neq 0$ for $p \in \mathcal{S}$, and we know that $f$ can be chosen as in (\ref{eq13}). 

Now, the thesis can be directly obtained by computing the wedge product of the horizontal sections corresponding to the rows of the matrix $J_{\mathbb{H}}f (\Phi(m))$, by computing the norm of our result and finally by rewriting equation (\ref{eq14}). The result we obtain is (\ref{eq28}). 
The constant 1 in equation (\ref{eq28}) stands for the determinant of the identity matrix $\mathbb{I}_k$ (i.e. the coefficient of the k-vector $X_1 \wedge \dots, \wedge X_k$). Let us now focus on the second addend in the square root of (\ref{eq28}). The index $\ell$ in equation (\ref{eq28}) highlights the fact that we are computing the minor of a $k \times k$ sub-matrix of $J_{\mathbb{H}}f(\Phi(m))$ composed by choosing $k-\ell$ of the first $k$ columns of $J_{\mathbb{H}}f(\Phi(m))$ (the ones whose index does not belong to $\{ j_1, \dots, j_k \}$) while the other $\ell$ are chosen among the $2n-k$ last columns of $J_{\mathbb{H}}f(\Phi(m))$. In this choice, we make sure that $\ell >1$. By the relationship between $J_{\mathbb{H}}f(\Phi(m))$ and $J^{\phi} \phi(m)$, given in (\ref{eq12}), we obtain the result.
\end{proof}

\subsection*{Acknowledgement}

The author expresses her gratitude to the referees for their invaluable advices. Author's gratitude goes also to Prof. Bruno Franchi and Prof. Francesco Serra Cassano for their useful comments and suggestions. Special thanks to Francesca Tripaldi for her helpful advices.


\end{document}